\documentclass[a4paper, 10pt]{article}
\usepackage{amsmath}
\usepackage{amssymb,esint}
\usepackage{amscd}
\usepackage{xspace}
\usepackage{fancyhdr}
\setcounter{MaxMatrixCols}{30}

\providecommand{\U}[1]{\protect\rule{.1in}{.1in}}
\setlength{\oddsidemargin}{-0.05in}
\setlength{\evensidemargin}{-0.05in}
\setlength{\textwidth}{14cm}
\newtheorem{theorem}{Theorem}[section]

\textheight=22.75cm
\voffset=-1truecm
\hoffset=+1.1truecm

\newtheorem{definition}[theorem]{Definition}

\newtheorem{proposition}[theorem]{Proposition}
\newtheorem{remark}[theorem]{Remark}

\newenvironment{proof}[1][Proof]{\textbf{#1.} }{\hfill\rule{0.5em}{0.5em}}
{\catcode`\@=11\global\let\AddToReset=\@addtoreset
\AddToReset{equation}{section}

\AddToReset{theorem}{section}

\begin{document}
\title{Wiener criteria for existence of large solutions of nonlinear parabolic equations with absorption  in a non-cylindrical domain }
\author{
 {\bf Quoc-Hung  Nguyen \thanks{ E-mail address: Hung.Nguyen-Quoc@lmpt.univ-tours.fr, quoc-hung.nguyen@epfl.ch}}\\[0.5mm]
 {\bf Laurent V\'eron\thanks{ E-mail address: Laurent.Veron@lmpt.univ-tours.fr}}\\[2mm]
{\small Laboratoire de Math\'ematiques et Physique Th\'eorique, }\\
{\small  Universit\'e Fran\c{c}ois Rabelais,  Tours,  FRANCE}}
\date{}
\maketitle

\begin{abstract}
 We obtain a necessary condition and a sufficient  condition, both expressed in terms of Wiener type tests involving the parabolic  $W_{q'}^{2,1}$- capacity, where $q'=\frac{q}{q-1}$ and $q>1$,  for the existence of large solutions to equation  $\partial_tu-\Delta u+u^q=0$ in  a non-cylindrical domain. We provide also a sufficient condition for the existence of such solutions to  equation $\partial_t u-\Delta u+e^u-1=0$.  Besides, we apply our results to equation: $\partial_tu-\Delta u+a|\nabla u|^p+bu^{q}=0$ for $a,b>0$, $1<p<2$ and $q>1$. 
\end{abstract}\smallskip
{\it Keywords.} Bessel capacities; Hausdorff capacities; parabolic boundary; Riesz potential; maximal solutions.\smallskip

\noindent{\it 2010 Mathematics Subject Classification.} 35K58, 28A12, 46E35.

\section{Introduction}
The aim of this paper is to study the problem of existence of large solutions to some nonlinear parabolic equations with superlinear absorption in an {\it arbitrary} bounded open set $O\subset\mathbb{R}^{N+1}$, $N\geq 2$. These are functions $u\in C^{2,1}(O)$, solutions of
\begin{equation}\label{6h2205201413}
\begin{array}{lll}
\partial_tu-\Delta u +|u|^{q-1}u=0 &\text{ in }~ O,\\ \phantom{-,-}
 \displaystyle  \lim\limits_{\delta\to 0}\inf_{O\cap Q_\delta(x,t)}u =\infty~~&\text{ for all}~~(x,t)\in \partial_pO,\\ 
   \end{array}
 \end{equation}
 with $q>1$ and 
\begin{equation}\label{6h2205201414}
\begin{array}{lll}
\partial_tu-\Delta u +\operatorname{sign}(u)(e^{|u|}-1)=0 &\text{ in }~ O,\\ \phantom{----,,--}
  \displaystyle     \lim\limits_{\delta\to 0}\inf_{O\cap Q_\delta(x,t)}u =\infty~~&\text{ for all}~~(x,t)\in \partial_pO,\\ 
    \end{array}
 \end{equation}
in which expressions $\partial_pO$ denotes the parabolic boundary of $O$, i.e. the set all points $X=(x,t)\in\partial O$ such that the intersection of the cylinder $Q_\delta(x,t):=B_\delta(x)\times(t-\delta^2,t)$ with $O^c$ is not empty for any $\delta>0$. By the maximal principle for parabolic equations we can assume that all solutions of \eqref{6h2205201413} and \eqref{6h2205201414} are positive. Henceforth we consider only positive solutions of the preceding equations. \\
 In \cite{66Ve1}, we studied the existence and the uniqueness of solution of semilinear heat equations in a cylindrical domain,
\begin{equation}\label{6h2205201415}
 \begin{array}{lll}
   \partial_tu-\Delta u +f(u)=0 &\text{ in }~ \Omega\times(0,\infty),\\ \phantom{\partial_tu-\Delta  +f(u)}
    u=\infty~~&\text{ in }~ \partial_p\left(\Omega\times(0,\infty)\right),\\ 
    \end{array} 
 \end{equation}
 where $\Omega$ is a bounded open set in $\mathbb{R}^N$ and $f$ a continuous nondecreasing  real-valued function such that $f(0)\geq 0$ and $f(a)>0$ for some $a>0$. In order to obtain the existence of a maximal solution of $\partial_tu-\Delta u +f(u)=0 \text{ in }~ \Omega\times(0,\infty)$ there is need to introduce the following assumptions
\begin{equation}
\label{6hKO}\begin{array} {lll}
  \displaystyle &(i)   \qquad\qquad&\displaystyle\int_{a}^{\infty}\left(\int_{0}^{s}f(\tau)d\tau\right)^{-\frac{1}{2}}ds <\infty,\\[4mm]
  \displaystyle  &(ii)  \displaystyle &\displaystyle\int_{a}^{\infty}\left(f(s)\right)^{-1}ds <\infty.
  \end{array}\end{equation}
  Condition (i), due to Keller and Osserman, is a necessary and sufficient for the existence of a maximal solution to 
 \begin{align}\label{6h230520141}
 -\Delta u+f(u)=0~~ \text{in}~\Omega.
 \end{align} 
 Condition (ii) is a necessary and sufficient for the existence of a maximal solution of the differential equation
  \begin{align}\label{6hODE}
 \varphi'+f(\varphi)=0\qquad\text{in }(0,\infty),
  \end{align} 
and this solution tends to $\infty$ at $0$.   In \cite{66Ve1}, it is shown that if 
 for any $m\in\mathbb{R}$ there exists $L=L(m)>0$ such that 
 \begin{align*}
 \text{ for any } x,y\geq m \Rightarrow f(x+y)\geq f(x)+f(y)-L, 
 \end{align*}
and if \eqref{6h230520141} has a large  solution, then \eqref{6h2205201415} admits a solution.
   
  It is not alway true that the maximal solution to \eqref{6h230520141} is a large solution. However, if $f$ satisfies 
  \begin{align*}
  \int_{1}^{\infty}s^{-2(N-1)/(N-2)}f(s)ds<\infty ~~\text{ if } N\geq 3,
  \end{align*} 
  or 
  \begin{align*}
    \inf\left\{a\geq 0:\int_{0}^{\infty} f(s)e^{-as}ds<\infty~~\right\}<\infty ~\text{ if } N=2,
    \end{align*} 
    then \eqref{6h230520141} has a large solution for any bounded domain $\Omega$, see \cite{66MV1}.\smallskip
    
 When $f(u)=u^q$, $q>1$ and $N\geq 3$, the first above condition is satisfied if and only if $q<q_c:=\frac{N}{N-2}$, this is called {\it the sub-critical case}. When $q\geq q_c$, a necessary and sufficient condition for the existence of a large solution to
 \begin{align}\label{u^q}
 -\Delta u+u^q=0~~ \text{in}~\Omega
 \end{align} 
 is  expressed in term of a Wiener-type test,
 \begin{align}\label{labut}
 \int_{0}^{1}\frac{\text{Cap}_{2,q'}(\Omega^c\cap B_r(x))}{r^{N-2}}\frac{dr}{r}=\infty~~\text{ for all }~x\in\partial\Omega. 
 \end{align}
 
 In the case $q=2$ it is obtained  by Dhersin and Le Gall \cite{66DhGa}, see also \cite{66Legall1,66Legall2}, using probabilistic methods involving the Brownian snake; this method can be extended for $1<q\leq 2$ by using ideas from \cite{66DyKu1,66DyKu2}. In the general case the result is proved by Labutin, by  purely analytic methods \cite{66Lab}. 
Note that $q'=\frac{q}{q-1}$ and $\text{Cap}_{2,q'}$ is the capacity associated to the Sobolev space $W^{2,q'}(\mathbb{R}^N)$.  

In \cite{66HV2} we obtain sufficient conditions  for the existence of a large solution to 
 \begin{align}\label{ež-1}
 -\Delta u+e^u-1=0~~ \text{in}~\Omega,
 \end{align} 
expressed in terms of the Hausdorff $\mathcal{H}_1^{N-2}$-capacity  in $\mathbb{R}^N$, and more precisely 
 \begin{align}
 \int_{0}^{1}\frac{\mathcal{H}_1^{N-2}(\Omega^c\cap B_r(x))}{r^{N-2}}\frac{dr}{r}=\infty~~\text{ for all }~x\in\partial\Omega. 
 \end{align}
 We refer to \cite{66MV2} for investigation of the initial trace theory of \eqref{6h2205201415}. \smallskip
 
In \cite{66EvGa}, Evans and Gariepy establish a Wiener  criterion for  the regularity of a boundary point (in the sense of potential theory) for the heat operator $L=\partial_t-\Delta $ in an arbitrary bounded set of $\mathbb{R}^{N+1}$. We denote by $\mathfrak M(\mathbb R^{N+1})$ the set of Radon  measures in $\mathbb{R}^{N+1}$ and, for any compact set $K\subset\mathbb{R}^{N+1}$, by $\mathfrak M_K(\mathbb{R}^{N+1})$ the subset of $\mathfrak M(\mathbb{R}^{N+1})$ of measures with support in $K$. Their positive cones are respectively denoted by 
$\mathfrak M^+(\mathbb{R}^{N+1})$ and 
$\mathfrak M^+_K(\mathbb{R}^{N+1})$.
The capacity used in this criterion is the thermal capacity defined by \begin{align*}
\text{Cap}_{\mathbb{H}}(K)=\sup\{\mu(K):\mu\in\mathfrak M^+_K(\mathbb{R}^{N+1}), \mathbb{H}*\mu\leq 1 \},
\end{align*}
for any  $K\subset\mathbb{R}^{N+1}$ compact, where $\mathbb{H}$ is the heat kernel in $\mathbb{R}^{N+1}$. 
It coincides with the parabolic Bessel $\mathcal{G}_1$-capacity $\text{Cap}_{\mathcal{G}_1,2}$,
\begin{align*}
\text{Cap}_{\mathcal{G}_1,2}(K)=\sup\left\{\int_{\mathbb{R}^{N+1}}|f|^2dxdt: f\in L^2_+(\mathbb{R}^{N+1}),~\mathcal{G}_1*f\geq \chi_{K} \right\},
\end{align*} here $\mathcal{G}_1$ is the parabolic Bessel kernel of first order, see \cite[Remark 4.12]{66H1}. Garofalo and Lanconelli \cite{66GaLa} extend this result to the parabolic operator $L=\partial_t-\text{div}(A(x,t)\nabla) $, where $A(x,t)=(a_{i,j}(x,t))$, $i,j=1,2,...,N$ is a real, symmetric, matrix-valued function on $\mathbb{R}^{N+1}$ with $C^\infty$ entries satisfying 
\begin{align*}
C^{-1}|\xi|^2\leq \sum_{i,j=1}^{N}a_{i,j}(x,t)\xi_i\xi_j\leq C|\xi|^2~~\forall (x,t)\in\mathbb{R}^{N+1}, \,\forall \xi\in\mathbb{R}^{N},
\end{align*} 
for some constant $C>0$. \smallskip

Much less is known concerning the equation 
\begin{align}\label{6h200620141}
\partial_tu-\Delta u+f(u)=0
\end{align}
in a bounded open set $O\subset$ of $\mathbb{R}^{N+1}$, where $f$ is a continuous function in $\mathbb{R}$.  Gariepy and Ziemer \cite{66GaZi, 66Zi1} prove that if there exist $(x_0,t_0)\in \partial_p O$, $l\in\mathbb{R}$ and a weak solution $u\in W^{1,2}(O)\cap L^\infty(O)$ of  \eqref{6h200620141} such that $\eta(-l-\varepsilon+u)^+, \eta(l-\varepsilon-u)^+\in W_0^{1,2}(O)$ for any $\varepsilon>0$ and $\eta\in C_c^\infty(B_r(x_0)\times(-r^2+t_0,r^2+t_0))$ for some $r>0$, and if there holds
\begin{align*}
\int_{0}^{1}\frac{\text{Cap}_{\mathbb{H}}\left(O^c\cap\left( B_\rho(x_0)\times(t_0-\frac{9}{4}\alpha\rho^2,t_0-\frac{5}{4}\alpha\rho^2)\right)\right)}{\rho^N}\frac{d\rho}{\rho}=\infty ~\text{for some }\alpha>0,
\end{align*} then $\lim\limits_{(x,t)\to (x_0,t_0)}u(x,t)=l$. 
This result is not easy to use because it is not clear whether \eqref{6h200620141} has a weak solution $u\in W^{1,2}(O)$. In this article we show that \eqref{6h200620141} admits a maximal solution $u\in C^{2,1}(O)$ in an arbitrary bounded open set $O$,  which is constructed by using an approximation of $O$ from inside by dyadic parabolic cubes, provided that $f$ is as in  \eqref{6h2205201415} and satisfies \eqref{6hKO}. 

The main purpose of this article is to extend Labutin's result \cite{66Lab} to the semilinear parabolic equation \eqref{6h2205201413}. Namely, we give a necessary and a sufficient condition for the existence of solutions to problem \eqref{6h2205201413}  in a bounded non-cylindrical domain $O\subset\mathbb{R}^{N+1}$, expressed in terms of a Wiener test based upon the parabolic $W^{2,1}_{q'}$-capacity in $\mathbb{R}^{N+1}$. We also give a sufficient condition for solving problem \eqref{6h2205201414} expressed in terms of a Wiener test based upon the parabolic Hausdorff $\mathcal{PH}_\rho^N$-capacity.  These capacities are defined as follows: if $K\subset\mathbb{R}^{N+1}$ is a compact set, we set
\begin{align*}
           \text{Cap}_{2,1,q'}(K)=\inf\{||\varphi||^{q'}_{W^{2,1}_{q'}(\mathbb{R}^{N+1})}:\varphi\in S(\mathbb{R}^{N+1}), \varphi\geq 1 \text{ in a neighborhood of}~K \}, 
           \end{align*} 
           where      
          \begin{align*}
          ||\varphi||_{W^{2,1}_{q'}(\mathbb{R}^{N+1})}=|| \varphi||_{L^{q'}(\mathbb{R}^{N+1})}+||\frac{\partial \varphi}{\partial t}||_{L^{q'}(\mathbb{R}^{N+1})}+||\nabla \varphi||_{L^{q'}(\mathbb{R}^{N+1})}+\sum\limits_{i,j} ||\frac{\partial^{2} \varphi}{\partial x_i\partial x_j}||_{L^{q'}(\mathbb{R}^{N+1})},
          \end{align*}  
           and for a Suslin set $E\subset\mathbb{R}^{N+1}$,
            \begin{align*}
                       \text{Cap}_{2,1,q'}(E)=\sup\{\text{Cap}_{2,1,q'}(D):D\subset E, D \text{ compact} \}. 
                       \end{align*}
         This capacity has been used  in order to obtain estimates expressed with the help of potential  that are most helpful for studying quasilinear parabolic equations (see e.g. \cite{66BaPi1,66BaPi2,66H1}). Thanks to a result due to Richard and Bagby \cite{66Bag}, the capacities $\text{Cap}_{2,1,p}$ and $\text{Cap}_{\mathcal{G}_2,p}$ are equivalent in the sense that, for any Suslin set $K\subset\mathbb{R}^{N+1}$, there holds 
                    \begin{align*}
                    C^{-1}\text{Cap}_{2,1,q'}(K)\leq \text{Cap}_{\mathcal{G}_2,q'}(K)\leq C\text{Cap}_{2,1,q'}(K),
                    \end{align*} 
                    for some $C=C(N,q)$, where $\text{Cap}_{\mathcal{G}_2,q'}$ is the parabolic  Bessel $\mathcal{G}_2$-capacity, see \cite{66H1}.\\
               For a set $E\subset\mathbb{R}^{N+1}$, we define $\mathcal{PH}_\rho^N(E)$ by 
              \begin{align*}
              \mathcal{PH}_\rho^N(E)=\inf\left\{\displaystyle\sum_jr_j^N: E\subset\bigcup B_{r_j}(x_j)\times(t_j-r_j^2,t_j+r_j^2),~r_j\leq\rho\right\}.
              \end{align*} 
              It is easy to see that, for $0<\sigma\leq \rho$ and  $E\subset\mathbb{R}^{N+1}$, there holds
              \begin{align}\label{6h230520142}
              \mathcal{PH}_\rho^N(E)\leq \mathcal{PH}_\sigma^N(E)\leq C(N)\left(\frac{\rho}{\sigma}\right)^2\mathcal{PH}_\rho^N(E).
              \end{align}
With these notations, we can state the two main results of this paper. 
\begin{theorem}\label{6h230520143} Let $N\geq 2$ and $q\geq q_*:=\frac{N+2}{N}$. Then \smallskip

\noindent (i)  The equation \begin{align}\label{6h220520149}
\partial_tu-\Delta u +u^q=0 \text{ in }~O
\end{align} 
admits a large solution if there holds
 \begin{align}\label{6h2205201412}\int_{0}^{1}\frac{\operatorname{Cap}_{2,1,q'}(O^c\cap (B_{\frac{\rho}{30}}(x)\times (t-30 \rho^2,t-\rho^2)))}{\rho^N}\frac{d\rho}{\rho}=\infty,~~
 \end{align}  
 for any $(x,t)\in\partial_pO$ and $q>q_*$ or  $q=q_*$ when $N\geq 3$. \smallskip

\noindent (ii)   If equation \eqref{6h220520149} admits a large solution, then 
  \begin{align}\label{6h0106201413}
  \int_{0}^{1}\frac{\operatorname{Cap}_{2,1,q'}(O^c\cap Q_\rho (x,t))}{\rho^N}\frac{d\rho}{\rho}=\infty,
  \end{align}
 for any $(x,t)\in\partial_pO$,  where $Q_\rho(x,t)=B_\rho(x)\times (t-\rho^2,t)$.

\end{theorem}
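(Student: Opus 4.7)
The proof adapts Labutin's two-sided scheme for the elliptic Wiener criterion \cite{66Lab} to the parabolic setting: spatial balls are replaced by backward parabolic cylinders, the $W^{2,q'}(\mathbb{R}^N)$-capacity by $\operatorname{Cap}_{2,1,q'}$ on $\mathbb{R}^{N+1}$, and elliptic Riesz/Bessel potentials by their parabolic analogues. Two preliminary ingredients will be used throughout: the Bagby equivalence $\operatorname{Cap}_{2,1,q'}\asymp\operatorname{Cap}_{\mathcal{G}_2,q'}$, and the existence of a maximal solution $\overline{u}\in C^{2,1}(O)$ of \eqref{6h220520149}, produced by approximating $O$ from inside by finite unions of dyadic parabolic cubes $O_k$, as indicated in the introduction. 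The threshold $q\ge q_\ast=(N+2)/N$ is precisely where $\operatorname{Cap}_{2,1,q'}$ becomes the sharp tool for measuring $O^c$: below $q_\ast$ even isolated points have positive capacity and no Wiener-type test is needed.

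For part (i), once $\overline{u}$ is in hand, I aim at a dyadic pointwise lower bound of the form
$$\overline{u}(y,s)\;\geq\;c\sum_{k=0}^{k_0(y,s)}\Phi_k\bigl(\operatorname{Cap}_{2,1,q'}(O^c\cap A_k(x,t))\bigr),$$
where $A_k(x,t):=B_{2^{-k}/30}(x)\times(t-30\cdot 4^{-k},t-4^{-k})$ are the backward regions at dyadic scale $\rho_k=2^{-k}$ from \eqref{6h2205201412}, $k_0(y,s)\to\infty$ as $(y,s)\to(x,t)$ inside $O$, and $\Phi_k$ is a power-type scaling factor whose summability is equivalent, via \eqref{6h230520142} and a parabolic Hedberg-type lemma, to divergence of the Wiener integral in \eqref{6h2205201412}. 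The scale-$k$ bound is produced by localizing to $A_k(x,t)$, comparing $\overline{u}$ from below with the maximal solution of $\partial_tw-\Delta w+w^q=0$ in the complement of $O^c\cap A_k(x,t)$, and estimating this comparison solution from below through the parabolic Wolff potential of the capacitary equilibrium measure of $O^c\cap A_k(x,t)$. The principal technical obstacle is this Wolff-potential lower bound and its time-asymmetric nature---the extremal capacitary measure is concentrated in the past cone of $(y,s)$---which is precisely what forces the non-standard shape of the region in \eqref{6h2205201412}; the borderline case $q=q_\ast$ with $N\ge 3$ requires an additional logarithmic refinement.

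For part (ii), I argue by contraposition. Assume the integral in \eqref{6h0106201413} converges at some $(x,t)\in\partial_pO$ and let $u$ be any solution of \eqref{6h220520149} satisfying the blow-up condition \eqref{6h2205201413}. Testing the equation against $\varphi=\psi^{2q'}$, where $\psi\in\mathcal{S}(\mathbb{R}^{N+1})$ is admissible for $\operatorname{Cap}_{2,1,q'}(K)$ with compact $K\subset O^c\cap Q_\rho(x,t)$, and applying H\"older with exponents $(q,q')$, yields, after minimizing in $\psi$ and taking suprema in $K$, the capacitary testing inequality
$$\operatorname{Cap}_{2,1,q'}\bigl(O^c\cap Q_\rho(x,t)\bigr)\;\le\;C\int_{O\cap Q_{2\rho}(x,t)}u^q\,dx\,dt.$$
Summing its dyadic version weighted by $\rho_k^{-N}$ and invoking the convergence of the Wiener integral, I upgrade this integral bound into a pointwise upper bound $u(y,s)\le C$ on $O\cap Q_\delta(x,t)$ for $\delta$ small, contradicting \eqref{6h2205201413}. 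The main difficulty here is justifying the testing identity when $\operatorname{supp}\varphi$ meets $\partial_pO$; I handle this by approximating $u$ from below by the solutions of \eqref{6h220520149} in the exhausting subdomains $O_k$ produced above, for which the parabolic boundary contributions vanish identically, and then passing to the limit using interior $W^{2,1}_p$-regularity.
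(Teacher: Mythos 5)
Both halves of your plan contain gaps at exactly the places where the paper has to work hardest. For part (i), your scale-by-scale scheme does not produce the Wiener sum: from a comparison at each scale you only get $\overline{u}(y,s)\gtrsim \Phi_k(\operatorname{Cap}_{2,1,q'}(O^c\cap A_k))$ for each $k$ separately, hence a bound by the \emph{maximum} over $k$, not by $\sum_k$ -- and divergence of a series does not force its terms to be unbounded, so the maximum is useless. The paper gets the sum by superposing at the level of the data: it takes the capacitary measures $\mu_k$ of the backward-in-time slabs $M_k$ (Theorem \ref{6h2305201411}), solves $\partial_tU_1-\Delta U_1+U_1^q=\varepsilon\sum_k\mu_k$ by Theorem \ref{6h230520148}, and uses the lower bound \eqref{6hest-u}; since the $\mu_k$ are supported in $O^c$, the maximal solution in $O$ dominates $U_1$. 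The real work, which your sketch never addresses, is absorbing the nonlinear correction $\mathbb{I}_2^{2R_0}\bigl[\bigl(\mathbb{I}_2^{2R_0}[\sum_k\mu_k]\bigr)^q\bigr]$ into the linear term (the $A_1,A_2,A_3$ decomposition, the bounds $\mu_k(\mathbb{R}^{N+1})\lesssim r_k^{N+2-2q'}$, the choice of $\varepsilon$ small), and it is precisely there that the restriction $N\geq 3$ when $q=q_*$ enters, via $\sum_k k^{-1/(q-1)}<\infty$; calling this "a logarithmic refinement" does not supply it.

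For part (ii), the central inequality you propose, $\operatorname{Cap}_{2,1,q'}(O^c\cap Q_\rho)\leq C\int_{O\cap Q_{2\rho}}u^q$, is not what testing against $\psi^{2q'}$ and H\"older give: that computation bounds $\int u^q$ (cut off away from $O^c$) \emph{above} by $\|\psi\|_{W^{2,1}_{q'}}^{q'}$, i.e. by the capacity -- this is the paper's Proposition \ref{6h010620143}, with $\xi=(1-\varphi)^{2q'}$ -- and it cannot be reversed by "minimizing in $\psi$"; moreover the integration by parts over $O$ produces boundary terms on $\partial_pO$ where $u=\infty$, and approximating by the domains $O_k$ does not make them vanish for the solutions you need. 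Even granting your inequality, the logic does not close: convergence of the Wiener integral would then merely be consistent with it and yields no upper bound on $u$, so the claimed "upgrade to a pointwise bound $u\leq C$" is unsupported -- that upgrade is exactly the content of the paper's machinery (subadditivity of maximal exterior solutions, Remark \ref{6h010620147}; the sup and inf capacitary estimates of Propositions \ref{6h010620143}--\ref{6h010620144}; the scaling Proposition \ref{6h280520141}; and the dichotomy on whether the rescaled capacities are eventually small). As it stands, both the key lower-bound mechanism in (i) and the key upper-bound mechanism in (ii) are missing.
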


 It is an open problem to prove that the maximal solution is unique whenever it exists as it holds in the elliptic case for equation (\ref{u^q}), see Remark p. 25.

\begin{theorem}\label{6h230520144} Let $N\geq 2$. The equation \begin{align}\label{6h2205201410}
 \partial_tu-\Delta u +e^u-1=0 \text{ in }~O
 \end{align} admits a large solution if  there holds
 \begin{align}\label{6h2205201411}\int_{0}^{1}\frac{\mathcal{PH}_1^N(O^c\cap (B_{\frac{\rho}{30}}(x)\times (t-30 \rho^2,t-\rho^2)))}{\rho^N}\frac{d\rho}{\rho}=\infty,
\end{align}
for any $(x,t)\in\partial_pO$.  
\end{theorem}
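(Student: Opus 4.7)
My plan is to first construct a maximal solution $u_O$ of \eqref{6h2205201410} in the bounded open set $O$ by approximating $O$ from the interior by an increasing sequence $(O_n)$ of finite unions of parabolic dyadic cubes, and then to show that the Wiener hypothesis \eqref{6h2205201411} forces this maximal solution to be a large solution. Since $f(u)=e^u-1$ satisfies both Keller--Osserman conditions in \eqref{6hKO} together with the super-additivity inequality $f(x+y)\ge f(x)+f(y)$ for $x,y\ge 0$, the general existence framework alluded to in the introduction produces large solutions $u_n$ of the corresponding problem on each $O_n$; interior parabolic estimates together with the Keller--Osserman a priori upper bound then yield a locally uniform $C^{2,1}$-limit $u_O$ which is the maximal solution of \eqref{6h2205201410} in $O$. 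It therefore remains to prove $\liminf_{(x,t)\to(x_0,t_0)}u_O(x,t)=+\infty$ at every $(x_0,t_0)\in\partial_p O$.

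Fix such a parabolic boundary point $(x_0,t_0)$, set $\rho_k=30^{-k}$ and
\[
  \Sigma_k=B_{\rho_k/30}(x_0)\times(t_0-30\rho_k^2,t_0-\rho_k^2),
\]
so that hypothesis \eqref{6h2205201411} reads $\sum_{k\ge 0}\rho_k^{-N}\mathcal{PH}_1^N(O^c\cap\Sigma_k)=\infty$. For each $k$ choose an almost-minimizing parabolic cover of $O^c\cap\Sigma_k$ by balls $B_{r_j^k}(x_j^k)\times(t_j^k-(r_j^k)^2,t_j^k+(r_j^k)^2)$ with $r_j^k\le 1$ and $\sum_j(r_j^k)^N\le 2\mathcal{PH}_1^N(O^c\cap\Sigma_k)$, and spread on these balls a smooth Radon measure $\mu_k$ supported in an enlargement of $\Sigma_k\cap\overline{O^c}$, with density $\asymp\rho_k^{-N-2}$ on each cover ball and total mass $\asymp\mathcal{PH}_1^N(O^c\cap\Sigma_k)$, uniformly diffuse with respect to $\mathcal{PH}_1^N$ at every scale up to $\rho_k$. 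By the on-diagonal size of the heat kernel and the parabolic separation between $(x_0,t_0)$ and $\Sigma_k$, the resulting heat potential satisfies
\[
  \mathbb{H}*\mu_k(x_0,t_0)\asymp \rho_k^{-N}\mathcal{PH}_1^N(O^c\cap\Sigma_k).
\]

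The crucial step is a parabolic pointwise potential estimate of Brezis--V\'eron type: the maximal solution $u_O$ of \eqref{6h2205201410} satisfies
\[
  e^{u_O(x,t)}-1\ge c\,\mathbb{H}*\mu(x,t)
\]
for every positive smooth Radon measure $\mu$ supported in a compact subset of $O^c$ whose local density obeys the $\mathcal{PH}_1^N$-diffuseness bound used above. Applying this inequality first to each $\mu_k$, then to the superposed measure $\sum_{k\le K}\mu_k$ (the super-additivity of $f$ guaranteeing that the superposition is still compatible with a single comparison principle), and letting $(x,t)\to(x_0,t_0)$ followed by $K\to\infty$, the lower semi-continuity of $\mathbb{H}*\mu$ combined with the Wiener divergence yields
\[
  \liminf_{(x,t)\to(x_0,t_0)}\bigl(e^{u_O(x,t)}-1\bigr)\ge c\sum_{k\ge 0}\rho_k^{-N}\mathcal{PH}_1^N(O^c\cap\Sigma_k)=\infty,
\]
whence $\liminf_{(x,t)\to(x_0,t_0)}u_O(x,t)=+\infty$, as required.

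The main obstacle I anticipate is the pointwise potential estimate $e^{u_O}-1\ge c\,\mathbb{H}*\mu$. The anisotropic parabolic scaling prevents a direct transplant of the elliptic Brezis--V\'eron argument used in \cite{66HV2}: one must combine a parabolic Harnack-type estimate applied to $e^{u_O}$, a shell-adapted cut-off argument in the weak formulation of the equation, and a parabolic Frostman lemma showing that any set of positive $\mathcal{PH}_1^N$-content carries measures of the required density $\asymp\rho_k^{-N-2}$ at every scale up to $\rho_k$. These ingredients must be balanced so that the heat potential of the Hausdorff cover delivers the expected blow-up at $(x_0,t_0)$ while keeping $e^{u_O}\in L^1_{\mathrm{loc}}$ elsewhere, which is precisely the point where the parabolic capacity $\mathcal{PH}_1^N$ enters in a non-trivial way.
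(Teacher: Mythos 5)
Your construction of the maximal solution and the overall strategy (Frostman-type measures on $O^c$ near the boundary point, a potential lower bound on the maximal solution, summation over dyadic scales driven by the Wiener divergence) follow the paper's outline, but the step that carries the whole weight of the theorem is missing. The inequality $e^{u_O(x,t)}-1\ge c\,\mathbb{H}*\mu(x,t)$, asserted for every ``diffuse'' measure supported in $O^c$ and, crucially, for the superposed measures $\sum_{k\le K}\mu_k$ with a constant $c$ independent of $K$, is not proved; it is at least as strong as the theorem itself, and the ingredients you list (parabolic Harnack for $e^{u_O}$, shell-adapted cut-offs, a parabolic Frostman lemma) do not obviously produce it. The paper does not prove any such pointwise bound. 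Instead it solves the auxiliary measure-data problem $\partial_tU-\Delta U+e^{U}-1=\varepsilon\sum_k\nu_k$ in a large cylinder (Theorem \ref{6h230520148}-(ii)), where the $\nu_k$ are Frostman measures of Theorem \ref{6h2305201412} supported in $M_k\subset O^c$ with $\nu_k(M_k)\asymp\mathcal{PH}_1^N(M_k)$ and $\|\mathbb{M}_2^{2R_0}[\nu_k]\|_\infty\le1$, and uses the two-sided heat-potential estimates \eqref{6h230520145}--\eqref{6h230520147} to get $U\gtrsim \varepsilon\,(\text{potential sum})-\mathbb{I}_2^{2R_0}\bigl[\exp\bigl(C_1\mathbb{I}_2^{2R_0}[\varepsilon\sum_k\nu_k]\bigr)-1\bigr]$; the entire analytic difficulty is to show that this nonlinear correction term is bounded uniformly in the number of scales, which requires the exponential (John--Nirenberg type) estimate of Theorem \ref{6h2305201410}, the dyadic shell decomposition, the mass bounds $\nu_k(\mathbb{R}^{N+1})\lesssim r_k^N$, and a small parameter $\varepsilon$. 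Only then does the comparison $u_O\ge U$ in $O$ give the blow-up. Your sketch contains no substitute for this uniformity argument: summing the single-scale bounds $e^{u_O}-1\ge c\,\mathbb{H}*\mu_k$ only yields a maximum over $k$, and invoking super-additivity of $f$ for the superposed measure begs exactly the question that the bound on $B_0$ in the paper settles.

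There are also two concrete defects in your measure construction. First, a near-minimizing cover only bounds $\mathcal{PH}_1^N$ from above; to obtain a measure with total mass comparable to the content \emph{from below} and with the maximal-function (diffuseness) bound you must use the Frostman-type result (Theorem \ref{6h2305201412}), and that measure is then supported \emph{in} $O^c\cap\Sigma_k$, which is essential: your measure on an ``enlargement'' of $\Sigma_k\cap\overline{O^c}$ may charge $O$, and then the solution of the measure-data problem is no longer a subsolution of the homogeneous equation in $O$, so the comparison with the maximal solution fails. Second, the prescription ``density $\asymp\rho_k^{-N-2}$ on each cover ball with total mass $\asymp\mathcal{PH}_1^N(O^c\cap\Sigma_k)$'' is inconsistent when the cover radii $r_j^k$ are much smaller than $\rho_k$, since it gives mass $\asymp\rho_k^{-N-2}\sum_j(r_j^k)^{N+2}$, which can be far smaller than $\sum_j(r_j^k)^N$. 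These points, together with the missing uniform potential estimate, constitute a genuine gap rather than an alternative proof.
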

From properties of the $W^{2,1}_{q'}$-capacity, relation \eqref{6h2205201412}  is satisfied if the following relations hold in which $|\;\,|$ denotes the Lebesgue measure in $\mathbb{R}^{N+1}$, 
\begin{align*}\int_{0}^{1}\frac{|O^c\cap (B_{\frac{\rho}{30}}(x)\times (t-30 \rho^2,t-\rho^2)|^{1-\frac{2q'}{N+2}}}{\rho^N}\frac{d\rho}{\rho}=\infty~\text{ when } q>q_*,
\end{align*}
and 
\begin{align*}\int_{0}^{1}\frac{\left(\log_+ \left |O^c\cap (B_{\frac{\rho}{30}}(x)\times (t-30 \rho^2,t-\rho^2)\right|^{-1}\right)^{-\frac{N}{2}}}{\rho^N}\frac{d\rho}{\rho}=\infty
~\text{ when } q=q_*.
\end{align*}
Similarly, it follows from  properties of the $\mathcal{PH}_1^N$-capacity that
identity \eqref{6h2205201411} is verified if 
\begin{align*}
\int_{0}^{1}\frac{|O^c\cap (B_{\frac{\rho}{30}}(x)\times (t-30 \rho^2,t-\rho^2)|^{\frac{N}{N+2}}}{\rho^N}\frac{d\rho}{\rho}=\infty.
\end{align*}
When $O=\{(x,t)\in \mathbb{R}^{N+1}: |x|^2+\frac{|t|^2}{\lambda}<1\}$ for some $\lambda>0$, we see that $\partial O=\partial_p O$.  Therefore  \eqref{6h0106201413} holds  for any $(x,t)\in \partial_p O$, and \eqref{6h2205201412}-\eqref{6h2205201411} hold for any $(x,t)\in \partial_p O\backslash\{(0,\sqrt{\lambda})\}$. However, \eqref{6h2205201412} and \eqref{6h2205201411} are also valid at $(x,t)=(0,\sqrt{\lambda})$ if $\lambda >1800^2$, but  not valid if $\lambda <1800^2$.\smallskip

As a consequence of Theorem \ref{6h230520143} we derive a sufficient condition for the existence of a large solution of a class of viscous parabolic Hamilton-Jacobi equations.
\begin{theorem}\label{6h260520148} Let  $q_1>1$.  If there exists a large solution $v\in C^{2,1}(O)$ of 
\begin{align*}
\partial_tv-\Delta v +v^{q_1}=0 ~~~\text{ in }~O,
\end{align*}
then,  for any $a,b>0$, $1<q<q_1$ and $1<p<\frac{2q_1}{q_1+1}$, the problem 
\begin{equation}\label{6hHJP}
\begin{array}{lll}
     \partial_tu-\Delta u +a|\nabla u|^p+bu^{q}=0\quad &\text{ in } O,\\ 
     \phantom{     \partial_tu-\Delta  +a|\nabla u|^p+bu^{q}}
      u=\infty~&\text{ on }~\partial_p O,\\      
      \end{array} 
\end{equation}
 admits a solution $u\in C^{2,1}(O)$ which satisfies 
\begin{align*}
u(x,t)\geq C\min\left\{a^{-\frac{1}{p-1}}R^{-\frac{2-p}{p-1}+\frac{2}{\alpha(q_1-1)}}, b^{-\frac{1}{q-1}}R^{-\frac{2}{q-1}+\frac{2}{\alpha(q_1-1)}}\right\}(v(x,t))^{\frac{1}{\alpha}},
\end{align*}
for all $(x,t)\in O$ where $R>0$ is such that  $O\subset \tilde{Q}_R(x_0,t_0)$,   $C=C(N,p,q,q_1)>0$ and $\alpha= \max\left\{\frac{2(p-1)}{(q_1-1)(2-p)},\frac{q-1}{q_1-1}\right\}\in (0,1)$.
\end{theorem}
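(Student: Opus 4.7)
The strategy is to exhibit $w:=\lambda v^{1/\alpha}$, for a carefully chosen $\lambda$, as a pointwise subsolution of the Hamilton--Jacobi equation in \eqref{6hHJP}, then to construct $u$ as the decreasing limit of large solutions on smooth interior approximations of $O$, and finally to compare $u$ with $w$ to obtain the announced pointwise lower bound and the blow-up of $u$ on $\partial_{p}O$. Testing $w=\lambda v^{1/\alpha}$ in the equation, using $\partial_{t}v-\Delta v=-v^{q_{1}}$ and the chain rule with $\alpha\in(0,1)$, one computes
\begin{equation*}
\partial_{t}w-\Delta w=-\tfrac{\lambda}{\alpha}v^{q_{1}+\frac{1}{\alpha}-1}-\tfrac{\lambda(1-\alpha)}{\alpha^{2}}v^{\frac{1}{\alpha}-2}|\nabla v|^{2}\le -\tfrac{\lambda}{\alpha}v^{q_{1}+\frac{1}{\alpha}-1},
\end{equation*}
so $w$ is a subsolution provided
\begin{equation*}
a\bigl(\tfrac{\lambda}{\alpha}\bigr)^{p}v^{(\frac{1}{\alpha}-1)p}|\nabla v|^{p}\le \tfrac{\lambda}{2\alpha}v^{q_{1}+\frac{1}{\alpha}-1}\quad\text{and}\quad b\lambda^{q}v^{\frac{q}{\alpha}}\le \tfrac{\lambda}{2\alpha}v^{q_{1}+\frac{1}{\alpha}-1}
\end{equation*}
hold pointwise in $O$.

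For the gradient inequality I invoke the classical Bernstein-type bound $|\nabla v|\le C(N,q_{1})\,v^{(q_{1}+1)/2}$ for positive classical solutions of $\partial_{t}v-\Delta v+v^{q_{1}}=0$; the requirement reduces to $\lambda^{p-1}\le C^{-1}a^{-1}v^{(q_{1}-1)(2-p)/2-(p-1)/\alpha}$. The absorption inequality similarly reduces to $\lambda^{q-1}\le C^{-1}b^{-1}v^{(q_{1}-1)-(q-1)/\alpha}$. Both $v$-exponents are nonnegative by the two defining relations for $\alpha$, and the hypotheses $1<p<2q_{1}/(q_{1}+1)$ and $1<q<q_{1}$ translate exactly into $\alpha\in(0,1)$. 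A universal pointwise lower bound on $v$ is produced by comparing $v$ with the large solution of the same equation on the enlarged parabolic cube $\tilde Q_{2R}(x_{0},t_{0})$, which by scaling is bounded below by $\kappa(N,q_{1})R^{-2/(q_{1}-1)}$ at every point of $O$. Substituting this bound and solving the two inequalities for $\lambda$ yields the admissible value
\begin{equation*}
\lambda=C(N,p,q,q_{1})\min\Bigl\{a^{-\frac{1}{p-1}}R^{-\frac{2-p}{p-1}+\frac{2}{\alpha(q_{1}-1)}},\ b^{-\frac{1}{q-1}}R^{-\frac{2}{q-1}+\frac{2}{\alpha(q_{1}-1)}}\Bigr\},
\end{equation*}
matching exactly the constant in the statement.

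To build $u$, I exhaust $O$ from inside by smooth open sets $O_{n}$ with $\overline{O_{n}}\subset O_{n+1}\subset O$. In each $O_{n}$ the absorption $bu^{q}$ satisfies the Keller--Osserman condition and, since $a|\nabla u|^{p}\ge 0$, furnishes an upper barrier for the viscous Hamilton--Jacobi problem; together with standard Dirichlet theory, available because $1<p<2$ makes the first-order nonlinearity sub-quadratic and locally Lipschitz in the gradient, this produces a maximal classical solution $u_{n}\in C^{2,1}(O_{n})$ with $u_{n}\to\infty$ on $\partial_{p}O_{n}$. Comparison between $u_{n}$ and $u_{n+1}$ in $O_{n}$ gives monotonicity $u_{n+1}\le u_{n}$, and interior parabolic estimates show that the decreasing limit $u:=\lim_{n}u_{n}\in C^{2,1}(O)$ solves \eqref{6hHJP} in $O$. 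Since $w$ is continuous hence finite on each $\partial_{p}O_{n}$ while $u_{n}$ is infinite there, the comparison principle applied in $O_{n}$ yields $u_{n}\ge w$ in $O_{n}$, and letting $n\to\infty$ gives $u\ge w$ in $O$. As $v\to\infty$ on $\partial_{p}O$, so does $w$, which simultaneously supplies the announced pointwise lower bound and the large-solution boundary condition.

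The main obstacle lies in establishing the two analytic ingredients rigorously in this non-cylindrical geometry: the pointwise Bernstein-type gradient estimate for $v$ up to the places where $v$ blows up, and a comparison principle for the fully nonlinear operator $\partial_{t}-\Delta+a|\nabla\cdot|^{p}+b(\cdot)^{q}$ between a continuous subsolution and a classical solution that may be unbounded. Both ingredients are well known in smooth cylindrical settings; the real work is to localize them to the parabolic boundary $\partial_{p}O_{n}$ of an arbitrary open approximation.
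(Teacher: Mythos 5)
There is a genuine gap, and it is exactly the point you flag as ``the main obstacle'': after discarding the favorable term $-\tfrac{\lambda(1-\alpha)}{\alpha^{2}}v^{\frac{1}{\alpha}-2}|\nabla v|^{2}$ in the identity for $\partial_t w-\Delta w$, your subsolution argument hinges on the pointwise bound $|\nabla v|\le C(N,q_1)\,v^{(q_1+1)/2}$. Such a bound with a constant depending only on $N,q_1$ is false for general positive solutions (for a small, nearly caloric solution $v=\varepsilon\phi$ the left side is of order $\varepsilon$ while the right side is of order $\varepsilon^{(q_1+1)/2}$), and for large solutions it is equivalent to the matching \emph{lower} bound $v\gtrsim d(\cdot,\partial_pO)^{-2/(q_1-1)}$, which is precisely what is not available in the setting of this theorem: $O$ is an arbitrary bounded open set with possibly very irregular parabolic boundary, and the paper only establishes the upper Keller--Osserman bound \eqref{6h200520144}. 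The uniform lower bound $v\ge\kappa R^{-2/(q_1-1)}$ you do have does not repair this, since it does not control $|\nabla v|$ near $\partial_pO$. So as written the gradient inequality for $w$ is not justified, and no localization of a Bernstein estimate ``up to $\partial_pO_n$'' is likely to succeed in this generality.

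The paper's proof avoids the issue entirely, and the fix is to \emph{keep} the term you threw away. Writing the equation satisfied by $V=\lambda v^{1/\alpha}$, the change of variables produces the positive term $(1-\alpha)\frac{|\nabla V|^{2}}{V}$ on the good side; Young's inequality with exponents $\tfrac{2}{p}$ and $\tfrac{2}{2-p}$ between $\frac{|\nabla V|^{2}}{V}$ and the absorption term $\alpha^{-1}\lambda^{-\alpha(q_1-1)}V^{\alpha q_1-\alpha+1}$ yields $c\,|\nabla V|^{p}\lambda^{-\frac{\alpha(q_1-1)(2-p)}{2}}V^{\frac{\alpha(q_1-1)(2-p)}{2}-(p-1)}$, and since $\alpha\ge\frac{2(p-1)}{(q_1-1)(2-p)}$ the exponent of $V$ is nonnegative, so the uniform lower bound $v\ge (q_1-1)^{-\frac{1}{q_1-1}}R^{-\frac{2}{q_1-1}}$ (obtained by comparison with the explicit spatially constant solution $(q_1-1)^{-\frac{1}{q_1-1}}(2R^{2}+t)^{-\frac{1}{q_1-1}}$) suffices to dominate $a|\nabla V|^{p}$; the term $bV^{q}$ is handled as you do. This requires no gradient estimate on $v$ whatsoever and leads to the same choice of $\lambda$. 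The remainder of your plan (exhaustion of $O$, maximal solution of the Hamilton--Jacobi equation, comparison with the finite subsolution on each approximating domain, and blow-up of $u$ inherited from $v$) matches the paper's Remark \ref{6h260520146} and is fine in outline, though the existence step should be run as in Proposition \ref{6h010620146}, using that $b^{-\frac{1}{q-1}}$ times the maximal solution of the pure absorption equation is a supersolution, rather than appealing vaguely to ``standard Dirichlet theory.''
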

\section{Preliminaries}
Throughout the paper, we denote  
$$Q_\rho(x,t)=B_\rho(x)\times (t-\rho^2,t],$$ and  $$\tilde{Q}_\rho(x,t)=B_\rho(x)\times (t-\rho^2,t+\rho^2),$$ for $(x,t)\in\mathbb{R}^{N+1}$ and $\rho>0$, and $r_k=4^{-k}$ for all $k\in\mathbb{Z}$. We also denote $A\lesssim (\gtrsim )B$ if $A\leq  (\geq)C B $ for some $C$ depending on some structural constants,  $ A \asymp B$ if $A\lesssim B\lesssim A$. 

\begin{definition}\label{6hWolff} Let $R\in (0,\infty]$ and $\mu\in \mathfrak{M}^+(\mathbb{R}^{N+1})$. We define $R-$truncated Riesz parabolic potential $ \mathbb{I}_{2}^R$ of $\mu$ by
\begin{align*}
 \mathbb{I}_{2}^R[\mu](x,t)=\int_{0}^{R}\frac{\mu(\tilde{Q}_\rho(x,t))}{\rho^{N}}\frac{d\rho}{\rho}~~\text{ for all } ~(x,t)\in \mathbb{R}^{N+1},
\end{align*}                
and the $R-$truncated fractional maximal parabolic  potential $\mathbb{M}_{2}^R$ of $\mu$ by
\begin{align*}\mathbb{M}_{2}^R[\mu](x,t)=\sup_{0<\rho<R}\frac{\mu(\tilde{Q}_\rho(x,t))}{\rho^{N}}~~\text{ for all } ~(x,t)\in \mathbb{R}^{N+1}.
\end{align*}
\end{definition}
We also set $ \mathbb{I}_{2}^\infty= \mathbb{I}_{2}$ and $\mathbb{M}_{2}^\infty=\mathbb{M}_{2}$. We recall two results in \cite{66H1}.
\begin{theorem}\label{6h2305201411} Let $q>1, R>0$ and $K$ be  a compact set in $\mathbb{R}^{N+1}$. There exists $\mu:=\mu_K\in \mathfrak{M}^+(\mathbb{R}^{N+1})$ with compact support in $K$ such that
\begin{align*}
\mu (K)\asymp \operatorname{Cap}_{2,1,q'}(K)\asymp \int_{\mathbb{R}^{N+1}}\left(\mathbb{I}_2^{2R}[\mu]\right)^qdxdt
\end{align*}
 where the constants of equivalence  depend on $N,q$ and $R$. The measure $\mu_K$ is called the capacitary measure of $K$.
\end{theorem}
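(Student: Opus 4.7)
The plan is to pass first to the parabolic Bessel $\mathcal{G}_2$-capacity, construct a capacitary measure there via nonlinear potential theory, and only at the end translate Bessel potential energy into truncated parabolic Riesz potential energy. By the Bagby equivalence recalled in the introduction, $\operatorname{Cap}_{2,1,q'}(K)\asymp\operatorname{Cap}_{\mathcal{G}_2,q'}(K)$, so throughout the argument one may work with $\operatorname{Cap}_{\mathcal{G}_2,q'}$ in place of $\operatorname{Cap}_{2,1,q'}$.

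Next I would invoke the dual characterization
\begin{equation*}
\operatorname{Cap}_{\mathcal{G}_2,q'}(K)^{1/q'}=\sup\bigl\{\mu(K):\mu\in\mathfrak{M}^+_K(\mathbb{R}^{N+1}),\ \|\mathcal{G}_2\ast\mu\|_{L^q(\mathbb{R}^{N+1})}\leq 1\bigr\},
\end{equation*}
together with the convex duality argument of Meyers and Adams--Hedberg, adapted to parabolic scaling, to produce an extremal measure $\mu_K\in\mathfrak{M}^+_K(\mathbb{R}^{N+1})$ satisfying the nonlinear Wolff-type relation $\mathcal{G}_2\ast(\mathcal{G}_2\ast\mu_K)^{q-1}\geq c>0$ quasi-everywhere on $K$ and $\leq C$ on $\mathbb{R}^{N+1}$. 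Pairing this pointwise inequality with $\mu_K$ and with Lebesgue measure, and using Fubini, yields $\mu_K(K)\asymp\operatorname{Cap}_{\mathcal{G}_2,q'}(K)\asymp\int_{\mathbb{R}^{N+1}}(\mathcal{G}_2\ast\mu_K)^q\,dxdt$ in the standard way.

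The remaining, and main, task is to replace the Bessel energy by its truncated Riesz counterpart. The layer-cake/Fubini identity
\begin{equation*}
\mathbb{I}_2^{2R}[\mu](x,t)=\frac{1}{N}\int_{\tilde{Q}_{2R}(x,t)}\bigl(d((x,t),(y,s))^{-N}-(2R)^{-N}\bigr)\,d\mu(y,s),
\end{equation*}
where $d((x,t),(y,s))=\max\{|x-y|,\sqrt{|t-s|}\}$ is the parabolic distance, shows that $\mathbb{I}_2^{2R}[\mu]$ is essentially the parabolic Riesz potential of order $2$ truncated at parabolic scale $2R$. The parabolic Bessel kernel $\mathcal{G}_2$ is pointwise comparable to the Riesz kernel $d(\cdot,0)^{-N}$ inside $\tilde{Q}_{2R}(0)$ and decays super-polynomially outside. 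Applying this kernel comparison to $\mu=\mu_K$, raising to the $q$-th power, integrating over $\mathbb{R}^{N+1}$, and absorbing the exponentially decaying Bessel tail into a constant multiple of $\mu_K(K)^q$, yields $\int(\mathcal{G}_2\ast\mu_K)^q\,dxdt\asymp\int(\mathbb{I}_2^{2R}[\mu_K])^q\,dxdt$ with constants depending only on $N,q,R$.

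The main obstacle is carrying out this last kernel-comparison step uniformly over compact $K$: one must control simultaneously the short-range regime, where the truncation-error term $(2R)^{-N}\mu(\tilde{Q}_{2R}(x,t))$ must be absorbable into the principal $d^{-N}$ part, and the long-range regime, where the algebraic tail of the (untruncated) Riesz potential has to be matched against the exponential decay of the Bessel kernel. The truncation scale $2R$ is chosen precisely so that both energies remain finite for all $q>1$, and it is in this absorption step that the explicit dependence of the equivalence constants on $R$ appears.
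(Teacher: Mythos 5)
Your first two steps are fine and essentially match what the cited source does (the paper itself does not prove this theorem; it recalls it from \cite{66H1}): passing to $\operatorname{Cap}_{\mathcal{G}_2,q'}$ via Bagby's equivalence and invoking Meyers/Adams--Hedberg duality to produce a capacitary measure $\mu_K$ with $\mu_K(K)\asymp\operatorname{Cap}_{\mathcal{G}_2,q'}(K)\asymp\int(\mathcal{G}_2*\mu_K)^q\,dxdt$ is a legitimate route, and your truncation identity for $\mathbb{I}_2^{2R}$ is correct.

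The genuine gap is the kernel-comparison step. The parabolic Bessel kernel of order $2$ behaves like $\mathcal{G}_2(x,t)\asymp t^{-N/2}e^{-|x|^2/(4t)}e^{-t}\chi_{t>0}$, and this is \emph{not} pointwise comparable to $\bigl(\max\{|x|,|t|^{1/2}\}\bigr)^{-N}$ on $\tilde{Q}_{2R}(0,0)$: in the space-like region $|x|\geq |t|^{1/2}$ it is exponentially smaller than the parabolic Riesz kernel, and it vanishes for $t<0$ while the Riesz kernel does not. Only the one-sided bound $\mathcal{G}_2\lesssim d^{-N}$ holds, which is exactly the direction you do \emph{not} need: to conclude $\int(\mathbb{I}_2^{2R}[\mu_K])^q\lesssim \operatorname{Cap}_{2,1,q'}(K)$ you must dominate the Riesz energy by the Bessel energy, and pointwise this fails outright (take $(x,t)$ with $t$ below $\operatorname{supp}\mu_K$ in time: then $\mathcal{G}_2*\mu_K(x,t)=0$ but $\mathbb{I}_2^{2R}[\mu_K](x,t)>0$). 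The equivalence of the two energies is true only after integration, and proving it requires a parabolic Wolff-inequality / Muckenhoupt--Wheeden type theorem (the kind of result the paper itself uses in the Appendix, cf.\ Theorem 4.2 of \cite{66H1}, where the $L^{(N+2)/N}$ norms of $\mathbb{M}_2[\mu]$ and $\mathbb{I}_2[\mu]$ are shown comparable), i.e.\ one compares both energies with $\int W[\mu]\,d\mu$ for a truncated Wolff potential; this is a substantive argument, not a kernel-by-kernel comparison. Your tail-absorption step is also not closed as written: it yields $\operatorname{Cap}(K)\lesssim \int(\mathbb{I}_2^{2R}[\mu_K])^q+C\,\mu_K(K)^q$, and $\mu_K(K)^q\asymp\operatorname{Cap}(K)^q$ cannot in general be absorbed into the left-hand side for arbitrary compact $K$. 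So the architecture is reasonable, but the decisive step of the proof is missing.
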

\begin{theorem} \label{6h2305201410}  For any $R>0$, there exist positive constants $C_1,C_2$ such that for any $\mu\in \mathfrak{M}^+(\mathbb{R}^{N+1})$ such that $||\mathbb{M}_2^R[\mu]||_{L^\infty(\mathbb{R}^{N+1})}\le 1$, there holds
$$\frac{1}{|Q|}\int_{Q}exp(C_1\mathbb{I}^R_{2}[\chi_{Q}\mu])dxdt:=\fint_{Q}exp(C_1\mathbb{I}^R_{2}[\chi_{Q}\mu])dxdt\le C_2,$$ 
 for all $Q=\tilde{Q}_r(y,s)\subset \mathbb{R}^{N+1}$, $r>0$ , where $\chi_Q$ is the indicator function of $Q$. 
\end{theorem}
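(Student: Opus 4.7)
The plan is to deduce the claimed estimate from the parabolic John--Nirenberg inequality applied to $U := \mathbb{I}_2^R[\chi_Q\mu]$, after first showing that (i) the mean of $U$ on $Q$ is bounded and (ii) $U$ lies in parabolic BMO with a norm bounded by a constant depending only on $N$ and $R$. A parabolic rescaling $(x,t) \mapsto (Rx, R^2 t)$ reduces everything to the normalized case $R=1$, so I work in that normalization throughout.

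For the mean-value bound, writing $\nu = \chi_Q\mu$ and applying Fubini,
\begin{align*}
\int_Q \mathbb{I}_2^1[\nu]\,dxdt = \int_0^1 \rho^{-N-1}\int |Q \cap \tilde{Q}_\rho(y',s')|\,d\nu(y',s')\,d\rho,
\end{align*}
and the elementary bound $|Q \cap \tilde Q_\rho(y',s')| \lesssim \min(|Q|, \rho^{N+2})$ combined with the Frostman-type estimate $\nu(\mathbb{R}^{N+1}) = \mu(Q) \lesssim r^N(1+r^2)$ (from $\|\mathbb{M}_2^1[\mu]\|_\infty \le 1$, covering by unit-scale cubes when $r > 1$) gives $\fint_Q U\,dxdt \le C(N)$ uniformly in $r$. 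For the BMO bound, I would fix a parabolic sub-cube $Q' = \tilde{Q}_{r'}(y',s')$ and split $\nu = \nu_1 + \nu_2$ with $\nu_1 = \chi_{2Q'}\nu$; the local piece obeys the same Frostman bound, so the identical Fubini computation yields $\fint_{Q'}\mathbb{I}_2^1[\nu_1]\,dxdt \le C(N)$. For the nonlocal piece I would exploit the nesting $\tilde Q_{\rho-2r'}(x,t) \subset \tilde{Q}_\rho(x',t') \subset \tilde Q_{\rho+2r'}(x,t)$ (valid for $(x,t),(x',t') \in Q'$ and $\rho \ge 2r'$) to bound the integrand difference of $\mathbb{I}_2^1[\nu_2]$ by $G(\rho+2r') - G(\rho-2r')$, where $G(\rho) := \nu(\tilde Q_\rho(x,t)) \le \rho^N$. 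Since $\nu_2$ vanishes at scales $\rho \lesssim r'$, an integration by parts in $\rho$ that transfers the $\pm 2r'$ shifts onto the weight $\rho^{-N-1}$ converts the oscillation into an integral of order $r' \int_{4r'}^{2} u^{-2}\,du \le C(N)$.

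With $\|U\|_{\mathrm{BMO}_{\mathrm{par}}} \le C(N)$ established, the parabolic John--Nirenberg inequality yields
\begin{align*}
\bigl|\{(x,t) \in Q : |U(x,t) - c_Q| > \lambda\}\bigr| \le c_1 |Q|\, e^{-c_2 \lambda},
\end{align*}
with $c_Q := \fint_Q U \le C(N)$, and an integration against $e^{C_1 \lambda}$ produces the claimed bound for any $C_1 < c_2$. The main obstacle I anticipate is the nonlocal oscillation step: a naive triangle inequality replaces $G(\rho+2r')-G(\rho-2r')$ by the full $G(\rho+2r') \le (\rho+2r')^N$ and so yields only the logarithmically divergent $\int_{2r'}^1 d\rho/\rho$, and one must exploit both the cancellation in the difference of the two $G$ values and the Frostman growth $G(\rho)\le \rho^N$ to close the estimate. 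A self-contained alternative that bypasses BMO altogether is to prove $L^p$-bounds $\|U\|_{L^p(Q)} \le C p |Q|^{1/p}$ by induction on $p$, using the identity $U^p = p\int_0^1 \mathbb{I}_2^\rho[\nu]^{p-1}\nu(\tilde Q_\rho(\cdot))\rho^{-N-1}\,d\rho$ together with Fubini, and then Taylor-expanding $e^{C_1 U}$.
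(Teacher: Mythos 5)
You cannot be checked against an in-paper argument here: the paper states Theorem \ref{6h2305201410} as one of the ``two results in \cite{66H1}'' that are recalled without proof, so the only question is whether your argument stands on its own --- and it does. The BMO-plus-John--Nirenberg route is the standard mechanism behind exponential integrability of a Riesz-type potential of a measure with the Frostman growth $\mu(\tilde{Q}_\rho(x,t))\le\rho^N$ for $\rho<R$, and your two key steps check out after the parabolic rescaling to $R=1$: the Fubini identity together with $|Q\cap\tilde{Q}_\rho(y',s')|\lesssim\min\{|Q|,\rho^{N+2}\}$ and $\mu(Q)\lesssim r^N(1+r^2)$ does give $\fint_Q\mathbb{I}_2^1[\chi_Q\mu]\,dxdt\le C(N)$ uniformly in $r$, and for the far part of the oscillation the weight-difference estimate $(\rho-2r')^{-N-1}-\rho^{-N-1}\lesssim r'\rho^{-N-2}$ for $\rho\ge 3r'$, combined with $\nu(\tilde{Q}_\rho)\le\rho^N$, yields $r'\int_{3r'}^{1}\rho^{-2}\,d\rho\lesssim1$ --- exactly the cancellation you identify as the crux; invoking John--Nirenberg is legitimate because parabolic cylinders with Lebesgue measure form a space of homogeneous type, so the resulting distributional estimate integrated against $e^{C_1\lambda}$ with $C_1$ small closes the proof. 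What remains to be written out are routine boundary cases, none of which threatens the argument: the scales $r'<\rho<2r'$, where your nesting hypothesis $\rho\ge2r'$ fails, are absorbed by $\nu_2(\tilde{Q}_\rho(x,t))\le\mu(\tilde{Q}_{2r'}(x,t))\lesssim (r')^N$ and $\int_{r'}^{2r'}(r')^N\rho^{-N-1}\,d\rho\lesssim1$; the upper endpoint, where the shifted integral runs over $\rho\in(1,1+2r')$, contributes $O(1)$ because there $\rho-2r'\ge\max\{r',1-2r'\}\ge 1/3$ and $\nu(\tilde{Q}_u)\lesssim1$ for $u\le3$; sub-cylinders with $r'\ge1$ are trivial since the truncation at scale $1$ forces $\mathbb{I}_2^1[\nu_2]\equiv0$ on $Q'$; and the bound $\nu(\tilde{Q}_\rho)\le\rho^N$ is only available for $\rho<1$, so above that scale one must (as you did for the mean) switch to the covering bound $\lesssim\rho^{N+2}$. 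Your alternative $L^p$ route would also work provided you genuinely obtain $\fint_Q U^p\le(Cp)^p$, since then $\sum_k C_1^k(Ck)^k/k!<\infty$ for $C_1$ small, but the BMO argument as sketched is already complete modulo the bookkeeping above.
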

Frostman's Lemma in \cite[Th. 3.4.27]{66Tur} is at the core of the dual definition of Hausdorff capacities with doubling weight. It is easy to see that it  is valid for  the parabolic Hausdorff $\mathcal{PH}_\rho^N$-capacity version. As a consequence we have
\begin{theorem}\label{6h2305201412} There holds
\begin{align*}
 \sup\left\{\mu(K):\mu\in\mathfrak{M}^+(\mathbb{R}^{N+1}),  \operatorname{supp}(\mu)\subset K, ||\mathbb{M}_2^\rho[\mu]||_{L^\infty(\mathbb{R}^{N+1})}\leq 1\right\} \asymp \mathcal{PH}_{\rho}^N(K),
\end{align*}
 for any compact set $K\subset \mathbb{R}^{N+1}$ and $\rho>0$, where equivalent constant depends on $N$.
\end{theorem}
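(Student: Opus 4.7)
The plan is to adapt Turesson's proof of the abstract Frostman lemma for Hausdorff capacities with doubling gauge (Theorem 3.4.27 in his monograph) to the parabolic setting, treating $\mathbb{R}^{N+1}$ as a space of homogeneous type with the family of parabolic cylinders $\tilde{Q}_r(x,t)$ in place of Euclidean balls and with gauge $h(r)=r^N$. Both $h$ and the cylinders are doubling, and there is a natural parabolic dyadic decomposition into cubes $Q_{k,\alpha}=\prod_{i=1}^N[\alpha_i 2^{-k},(\alpha_i+1)2^{-k})\times[\alpha_{N+1}2^{-2k},(\alpha_{N+1}+1)2^{-2k})$ for which any dyadic cube of level $k$ sits inside $O_N(1)$ cylinders $\tilde{Q}_r$ with $r\asymp 2^{-k}$ and vice versa. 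These are the only structural ingredients used in Turesson's argument, so it transfers with only cosmetic changes.

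For the easy direction $\mu(K)\lesssim\mathcal{PH}_\rho^N(K)$, pick any admissible $\mu$ and any cover $\{B_{r_j}(x_j)\times(t_j-r_j^2,t_j+r_j^2)\}$ of $K$ with $r_j\leq\rho$; the hypothesis $\|\mathbb{M}_2^\rho[\mu]\|_\infty\leq 1$ forces $\mu(\tilde{Q}_{r_j}(x_j,t_j))\leq r_j^N$, and summing then infimising over covers yields $\mu(K)\leq\mathcal{PH}_\rho^N(K)$.

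For the harder direction, fix $k_0$ with $2^{-k_0}\asymp\rho$ and introduce the dyadic variant $\widetilde{\mathcal{PH}}_\rho^N(K)$ defined by covers with parabolic dyadic cubes of level $\geq k_0$; the doubling properties above give $\widetilde{\mathcal{PH}}_\rho^N(K)\asymp\mathcal{PH}_\rho^N(K)$. Build a Frostman measure by a bottom-up procedure on the dyadic tree: at a very fine level $k_1\gg k_0$ distribute mass $2^{-k_1 N}$ uniformly on every level-$k_1$ cube meeting $K$, then move one level at a time toward $k_0$, rescaling the current measure on any dyadic cube $Q$ of level $k$ so its total mass equals $\min\{\mu(Q),2^{-kN}\}$. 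The resulting $\mu_{k_1}$ is supported in an $O(\rho)$-neighborhood of $K$ and obeys $\mu_{k_1}(Q)\leq 2^{-kN}$ at every dyadic level $k_0\leq k\leq k_1$, which passes to $\|\mathbb{M}_2^\rho[\mu_{k_1}]\|_\infty\lesssim 1$ thanks to the bounded overlap between parabolic cylinders and dyadic cubes of comparable size.

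The core estimate is the lower bound $\mu_{k_1}(\mathbb{R}^{N+1})\gtrsim\widetilde{\mathcal{PH}}_\rho^N(K)$. It follows from a greedy stopping-time argument on the tree: the maximal dyadic cubes at which a rescaling was triggered are saturated, $\mu_{k_1}(Q)=2^{-k(Q)N}$, they cover $K$ up to a set of arbitrarily small $\widetilde{\mathcal{PH}}$-content as $k_1\to\infty$, and therefore
\[
\widetilde{\mathcal{PH}}_\rho^N(K)\leq\sum_{Q}2^{-k(Q)N}=\mu_{k_1}(\mathbb{R}^{N+1}).
\]
To enforce $\operatorname{supp}(\mu)\subset K$, apply the construction to a shrinking sequence of open neighborhoods $K_n\downarrow K$ and extract a weak-$\ast$ limit, invoking monotonicity of $\mathcal{PH}_\rho^N$ and lower semicontinuity of $\mathbb{M}_2^\rho$ with respect to weak-$\ast$ convergence. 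The combinatorial mass lower bound coming from the stopping-time construction is the main obstacle; the rest is bookkeeping in switching between parabolic cylinders and dyadic parabolic cubes.
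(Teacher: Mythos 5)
Your proposal is correct and takes essentially the same route as the paper: the paper establishes this theorem simply by invoking Frostman's lemma for Hausdorff contents with doubling gauge (Turesson, Th.\ 3.4.27) and observing that it carries over to the parabolic metric, which is precisely the adaptation you spell out with the parabolic dyadic cubes, the bottom-up saturation construction, the bounded overlap between cylinders $\tilde{Q}_r$ and dyadic cubes of comparable scale, and the weak-$\ast$ limit to force $\operatorname{supp}(\mu)\subset K$. The only cosmetic slip is that the maximal saturated cubes in the stopping-time argument cover $K$ exactly (every level-$k_1$ cube meeting $K$ starts saturated), so no exceptional set of small content is needed; this does not affect the conclusion.
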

For our purpose, we need the some results about the behavior of the capacity with respect to dilations. 
\begin{proposition}\label{6h280520141}
Let  $K\subset \overline{\tilde{Q}_{100}(0,0)}$ be a compact set and $1<p<\frac{N+2}{2}$. Then 
\begin{align}\label{6h010620141}
\operatorname{Cap}_{2,1,p}(K)\gtrsim |K|^{1-\frac{2p}{N+2}}\,\text{ and } \,\operatorname{Cap}_{2,1,\frac{N+2}{2}}(K)\gtrsim \left(\log\left(\frac{|\tilde{Q}_{200}(0,0)|}{|K|}\right)\right)^{-\frac{N}{2}}.
\end{align}
Furthermore
\begin{align}\label{6h010620149}
& \operatorname{Cap}_{2,1,p}(K_\rho)\asymp \rho^{N+2-2p}\operatorname{Cap}_{2,1,p}(K),
\\[2mm] & \label{6h0106201410}
\frac{1}{\operatorname{Cap}_{2,1,\frac{N+2}{2}}(K_\rho)}\asymp \frac{1}{\operatorname{Cap}_{2,1,\frac{N+2}{2}}(K)}+(\log(2/\rho))^{N/2}, \end{align}
for any $0<\rho<1$, where $K_\rho= \{(\rho x,\rho^2t): (x,t)\in K\}$. 
\end{proposition}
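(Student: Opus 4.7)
My plan combines parabolic Sobolev/Adams--Trudinger--Moser embeddings with the dual potential-theoretic formulation of the capacity in Theorem \ref{6h2305201411} and the parabolic homogeneity $(x,t)\mapsto(\rho x,\rho^2 t)$ of $\mathbb{I}_2$. For the subcritical volume bound in \eqref{6h010620141}, fix $\varphi\in S(\mathbb{R}^{N+1})$ with $\varphi\geq 1$ on a neighbourhood of $K$; the parabolic Sobolev embedding $W^{2,1}_p(\mathbb{R}^{N+1})\hookrightarrow L^{p_\sharp}(\mathbb{R}^{N+1})$ with $p_\sharp=p(N+2)/(N+2-2p)$ gives $|K|^{1/p_\sharp}\leq\|\varphi\|_{L^{p_\sharp}}\lesssim\|\varphi\|_{W^{2,1}_p}$, and raising to the $p$th power and taking the infimum over $\varphi$ yields $|K|^{1-2p/(N+2)}\lesssim\operatorname{Cap}_{2,1,p}(K)$. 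For the critical bound I would extract a parabolic Adams--Trudinger--Moser inequality from Theorem \ref{6h2305201410}: writing $\varphi$ (via Bagby's equivalence with the Bessel capacity) as a Riesz parabolic potential and renormalising the representing measure so that the hypothesis of Theorem \ref{6h2305201410} applies on $Q=\tilde Q_{200}(0,0)$, with $M:=\|\varphi\|_{W^{2,1}_{(N+2)/2}}$ one obtains
$$\exp(c/M^{(N+2)/N})\,|K|\leq\int_{Q}\exp\big(c|\varphi|^{(N+2)/N}/M^{(N+2)/N}\big)\,dxdt\leq C|Q|,$$
and solving for $M^{(N+2)/2}$ produces the $(\log(|Q|/|K|))^{-N/2}$ lower bound after taking the infimum.

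For the subcritical scaling \eqref{6h010620149}, I would use the dual form of Theorem \ref{6h2305201411}, $\operatorname{Cap}_{2,1,p}(K)^{1/p}\asymp\sup\{\mu(K)/\|\mathbb{I}_2^{2R}[\mu]\|_{p'}:\mu\in\mathfrak{M}^+_K\}$. Let $\mu^\rho$ be the push-forward of the capacitary measure $\mu_K$ by the parabolic dilation, so that $\operatorname{supp}(\mu^\rho)\subset K_\rho$ and $\mu^\rho(K_\rho)=\mu_K(K)$. A direct change of variables gives
$$\mathbb{I}_2^{2R}[\mu^\rho](\rho y,\rho^2 s)=\rho^{-N}\mathbb{I}_2^{2R/\rho}[\mu_K](y,s),\qquad\|\mathbb{I}_2^{2R}[\mu^\rho]\|_{p'}^{p'}=\rho^{N+2-Np'}\|\mathbb{I}_2^{2R/\rho}[\mu_K]\|_{p'}^{p'}.$$
Since $p'>(N+2)/N$ and $\mu_K$ is compactly supported, the tail $\mathbb{I}_2^{2R/\rho}[\mu_K]-\mathbb{I}_2^{2R}[\mu_K]$ is uniformly bounded on a fixed bounded set, so the two $L^{p'}$-norms agree up to constants. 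Inserting into the dual formula and simplifying the exponent through $p(Np'-N-2)/p'=N+2-2p$ yields $\operatorname{Cap}_{2,1,p}(K_\rho)\gtrsim\rho^{N+2-2p}\operatorname{Cap}_{2,1,p}(K)$; the reverse inequality follows by pulling $K_\rho$ back to $K$ through the inverse dilation.

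For the critical scaling \eqref{6h0106201410} the exponent $N+2-Np'$ vanishes, so the Riesz $L^{p'}$-norm is scale-invariant and the entire logarithmic correction must come from the change of truncation radius. The key estimate is
$$\int_{\mathbb{R}^{N+1}}\big(\mathbb{I}_2^{2R/\rho}[\mu_K]-\mathbb{I}_2^{2R}[\mu_K]\big)^{(N+2)/N}dxdt\lesssim\mu_K(K)^{(N+2)/N}\log(2/\rho),$$
obtained by splitting on the parabolic distance $d$ from $(y,s)$ to $K$: on $\{d\leq 2R\}$ the difference is $\lesssim\mu_K(K)/R^N$ on a set of bounded measure, while on $\{2R<d\leq 2R/\rho\}$ the bounds $\mu_K(\tilde Q_\sigma(y,s))\leq\mu_K(K)$ and $|\{d\leq r\}|\lesssim r^{N+2}$ reduce the integral, after parabolic polar coordinates, to $\mu_K(K)^{(N+2)/N}\int_{2R}^{2R/\rho}\sigma^{-1}d\sigma\asymp\mu_K(K)^{(N+2)/N}\log(2/\rho)$. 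Plugging into the dual inequality and invoking the elementary $(A+B)^{(N+2)/2}\asymp A^{(N+2)/2}+B^{(N+2)/2}$ yields $\operatorname{Cap}(K_\rho)^{-1}\lesssim\operatorname{Cap}(K)^{-1}+(\log(2/\rho))^{N/2}$. The reverse inequality splits into $\operatorname{Cap}(K_\rho)\lesssim\operatorname{Cap}(K)$, obtained by dilating an admissible $\varphi$ for $K$ (at the critical index only the scale-invariant $\partial_t$ and $\nabla^2$ terms dominate), and $\operatorname{Cap}(K_\rho)\leq\operatorname{Cap}(\tilde Q_{100\rho})\lesssim(\log(2/\rho))^{-N/2}$, proved by a standard logarithmic cut-off on the small parabolic cube. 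The principal obstacle is precisely the sharp $\log(2/\rho)$ tail estimate above, which requires careful bookkeeping in parabolic polar coordinates and the uniform doubling of the parabolic metric balls.
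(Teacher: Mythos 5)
Your treatment of the subcritical volume bound, of the scaling law \eqref{6h010620149} and of the critical scaling \eqref{6h0106201410} is essentially sound, but the critical-case volume bound in \eqref{6h010620141} has a genuine gap. You claim that a parabolic Adams--Trudinger--Moser inequality of the form $\int_{Q}\exp\bigl(c|\varphi|^{(N+2)/N}/M^{(N+2)/N}\bigr)dxdt\leq C|Q|$ with $M=\|\varphi\|_{W^{2,1}_{(N+2)/2}}$ can be "extracted" from Theorem \ref{6h2305201410} by renormalising the representing measure. It cannot: Theorem \ref{6h2305201410} gives exponential integrability that is \emph{linear} in the truncated Riesz potential of a measure whose fractional maximal function is bounded. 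Normalising $f=(1+\partial_t-\Delta)\varphi$ by $\|f\|_{L^{(N+2)/2}}$ does make $\|\mathbb{M}_2[|f|]\|_\infty\lesssim 1$, but the conclusion is then only $\int_Q\exp\bigl(c|\varphi|/M\bigr)dxdt\lesssim |Q|$, which, after inserting $\varphi\geq 1$ on $K$ and taking the infimum, yields $\operatorname{Cap}_{2,1,\frac{N+2}{2}}(K)\gtrsim\bigl(\log(|Q|/|K|)\bigr)^{-\frac{N+2}{2}}$ -- strictly weaker than the stated $\bigl(\log(|Q|/|K|)\bigr)^{-\frac{N}{2}}$. Obtaining the sharp exponent $(N+2)/N$ inside the exponential is precisely the content of Adams' inequality (in its parabolic version), which is a separate theorem requiring its own proof and does not follow from Theorem \ref{6h2305201410} by rescaling. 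The paper avoids this entirely: it takes the Frostman measure $\mu$ on $K$ with $\|\mathbb{M}_2^2[\mu]\|_\infty\leq 1$ and $\mu(K)\asymp\mathcal{PH}_2^N(K)\gtrsim|K|^{N/(N+2)}$ from Theorem \ref{6h2305201412}, estimates $\|\mathbb{I}_2^1[\mu]\|_{L^{(N+2)/N}}^{(N+2)/N}\lesssim \mu(K)^{(N+2)/N}\log\bigl(|\tilde{Q}_{200}(0,0)|/|K|\bigr)$ via the Wolff-type energy identity \eqref{6h0206201411}, and concludes with the dual characterization \eqref{6h0206201410}. Either adopt that route or supply an independent proof of the parabolic Adams inequality.

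For the remaining parts your route differs from the paper's but works. The subcritical bound via the parabolic Sobolev embedding is fine (the paper simply cites \cite{66H1}). For \eqref{6h010620149} the paper scales test functions directly (with a localization, as in Proposition \ref{6h300520142}, to absorb the lower-order terms), whereas you scale capacitary measures through the dual formulation; your exponent arithmetic $p(Np'-N-2)/p'=N+2-2p$ and the tail comparison of $\mathbb{I}_2^{2R/\rho}[\mu]$ with $\mathbb{I}_2^{2R}[\mu]$ (which uses $Np'>N+2$ and that $K$ sits in a fixed bounded cylinder) are correct. For \eqref{6h0106201410} the paper passes to the normalized Wolff energy \eqref{6h0206201412} and invokes Labutin's argument, while you work with the $L^{(N+2)/N}$-norm of the potentials directly; your key estimate $\int\bigl(\mathbb{I}_2^{2R/\rho}[\mu_K]-\mathbb{I}_2^{2R}[\mu_K]\bigr)^{(N+2)/N}dxdt\lesssim\mu_K(K)^{(N+2)/N}\log(2/\rho)$ checks out (the shell $\{2R<d\leq 2R/\rho\}$ contributes $\mu_K(K)^{(N+2)/N}\int_{2R}^{2R/\rho}r^{-1}dr$), and the reverse inequality via the dilation of test functions (scale-invariance of the top-order seminorms at $p=\frac{N+2}{2}$) together with $\operatorname{Cap}_{2,1,\frac{N+2}{2}}\bigl(\overline{\tilde{Q}_{100\rho}(0,0)}\bigr)\lesssim(\log(2/\rho))^{-N/2}$ is standard, though you should actually carry out the logarithmic cut-off computation rather than only asserting it.
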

\begin{proposition}\label{6h300520142}
Let $K\subset \overline{\tilde{Q}_1(0,0)}$ be a compact set and $1<p\leq\frac{N+2}{2}$. Then, there exists  a function $\varphi\in C_c^\infty (\tilde{Q}_{3/2}(0,0))$ with $0\leq \varphi \leq 1$ and ${\left. \varphi \right|_{D}}=1$ for some open set~$ D\supset  K$ such that 
\begin{align}\label{6h300520141}
 \int_{\mathbb{R}^{N+1}}\left(|D^2 \varphi |^p+|\nabla \varphi|^p+|\varphi|^p+|\partial_t\varphi |^p\right) dxdt\lesssim \operatorname{Cap}_{2,1,p}(K).
\end{align}
\end{proposition}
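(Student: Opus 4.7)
\emph{Plan.} The strategy is to take a near-minimizer in the defining infimum of $\operatorname{Cap}_{2,1,p}(K)$, localize it to $\tilde Q_{3/2}(0,0)$, and then compose it with a smooth $[0,1]$-valued truncator to enforce the two missing requirements $0\le\varphi\le 1$ and compact support, while preserving $\varphi\equiv 1$ on an open neighbourhood of $K$.

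\emph{Step 1 (localized near-minimizer).} By the definition of $\operatorname{Cap}_{2,1,p}(K)$ I pick $\psi\in\mathcal S(\mathbb R^{N+1})$ with $\psi\ge 1$ on some open set $U\supset K$ and $\|\psi\|^{p}_{W^{2,1}_{p}(\mathbb R^{N+1})}\le 2\operatorname{Cap}_{2,1,p}(K)$. Fix once and for all $\eta\in C_c^\infty(\tilde Q_{3/2}(0,0))$ with $0\le\eta\le 1$ and $\eta\equiv 1$ on $\tilde Q_{5/4}(0,0)$. Since $K\subset\overline{\tilde Q_1(0,0)}\subset\tilde Q_{5/4}(0,0)$, the product $\psi_1:=\eta\psi$ belongs to $C_c^\infty(\tilde Q_{3/2})$ and still satisfies $\psi_1\ge 1$ on the open neighbourhood $U\cap\tilde Q_{5/4}$ of $K$. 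A Leibniz computation using $\|\eta\|_{C^{2}}\lesssim 1$ gives $\|\psi_1\|^{p}_{W^{2,1}_{p}}\lesssim\operatorname{Cap}_{2,1,p}(K)$.

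\emph{Step 2 (smooth $[0,1]$-truncation).} I pick a smooth non-decreasing $G:\mathbb R\to[0,1]$ with $G\equiv 0$ on $(-\infty,0]$ and $G\equiv 1$ on $[1,\infty)$, and set $\varphi:=G\circ\psi_1$. Then $\varphi\in C_c^\infty(\tilde Q_{3/2})$, $0\le\varphi\le 1$, and $\varphi\equiv 1$ on the open set $D:=\{\psi_1>1\}\supset K$. The chain rule produces $\partial_t\varphi=G'(\psi_1)\partial_t\psi_1$, $\nabla\varphi=G'(\psi_1)\nabla\psi_1$ and $\partial^{2}_{ij}\varphi=G'(\psi_1)\partial^{2}_{ij}\psi_1+G''(\psi_1)\partial_i\psi_1\partial_j\psi_1$. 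Every piece except the last cross term is pointwise bounded by the corresponding derivative of $\psi_1$ times $\|G'\|_\infty$ and is therefore absorbed into $\|\psi_1\|_{W^{2,1}_{p}}$ after $L^p$-integration.

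\emph{Main obstacle.} The essential estimate is the $L^{p}$-bound on the cross term $G''(\psi_1)\,\partial_i\psi_1\,\partial_j\psi_1$ by $\|\psi_1\|_{W^{2,1}_{p}}$, which amounts to an $L^{2p}$-estimate on $\nabla\psi_1$. Because $G''$ is supported in $[0,1]$, the integrand is concentrated on $\{0\le\psi_1\le 1\}$, where $\psi_1$ is bounded by $1$. Replacing $\psi_1$ by a bounded smooth truncation $\tilde\psi$ agreeing with $\psi_1$ on $\{\psi_1\le 1\}$, with $0\le\tilde\psi\le 2$ and $\|\tilde\psi\|_{W^{2,1}_{p}}\lesssim\|\psi_1\|_{W^{2,1}_{p}}$, reduces the task to controlling $\|\nabla\tilde\psi\|^{2}_{L^{2p}}$. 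A slicewise spatial Gagliardo--Nirenberg interpolation $\|\nabla_{x}u\|^{2}_{L^{2p}_{x}}\lesssim\|u\|_{L^\infty_{x}}\|D^{2}_{x}u\|_{L^{p}_{x}}$ (valid for all $p\in(1,\infty)$ with parameter $a=\tfrac12$), integrated in $t$, then delivers $\|\nabla\tilde\psi\|^{2p}_{L^{2p}}\lesssim\|\tilde\psi\|^{p}_{L^\infty}\|D^{2}_{x}\tilde\psi\|^{p}_{L^{p}}\lesssim\operatorname{Cap}_{2,1,p}(K)$. The delicate point is the smooth bounded truncation $\tilde\psi$ itself, since composing $\psi_1$ with a smooth $L^\infty$-clip reintroduces a cross term of the same type; a cleaner route that bypasses this entirely is to use the equivalence $\operatorname{Cap}_{2,1,p}\asymp\operatorname{Cap}_{\mathcal G_{2},p}$ and realise the capacitary function as a Bessel potential $\mathcal G_{2}*f$ with $f\ge 0$ and $\|f\|^{p}_{L^p}\lesssim\operatorname{Cap}_{2,1,p}(K)$, exploiting the contraction property of Bessel potentials under $u\mapsto\min(u,1)$ and then mollifying and multiplying by $\eta$ to obtain $\varphi$.
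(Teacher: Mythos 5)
You have set the proof up exactly as the paper does (near-minimizer, cutoff $\eta$, composition with a smooth truncator), and you have correctly isolated the one real difficulty: the chain-rule term $G''(\psi_1)\,\partial_i\psi_1\,\partial_j\psi_1$, i.e.\ an $L^{2p}$ bound on $\nabla\psi_1$ where $\psi_1\lesssim 1$. But neither of your two attempts to close this actually does so. The route through a bounded smooth truncation $\tilde\psi$ is circular, as you yourself note: producing $\tilde\psi$ with $\|D^2\tilde\psi\|_{L^p}\lesssim\|\psi_1\|_{W^{2,1}_p}$ requires controlling exactly the same quadratic-gradient term. The alternative route through $\operatorname{Cap}_{2,1,p}\asymp\operatorname{Cap}_{\mathcal G_2,p}$ rests on a ``contraction property of Bessel potentials under $u\mapsto\min(u,1)$'' which you assert without proof; for \emph{second-order} (here parabolic) potential spaces truncation is not an elementary operation — its boundedness is essentially of the same depth as the proposition you are proving and is normally derived from capacitary strong-type estimates, so invoking it here is a genuine gap, not a shortcut.

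The missing ingredient, which is how the paper closes the argument, is Adams' inequality (reference [Ad] in the paper): for smooth $\phi\ge 0$,
\begin{align*}
\int_{\mathbb{R}^N}\frac{|\nabla\phi(\cdot,t)|^{2p}}{(\phi(\cdot,t))^{p}}\,dx\;\lesssim\;\int_{\mathbb{R}^N}|D^2\phi(\cdot,t)|^{p}\,dx\qquad\text{for every }t,
\end{align*}
applied slicewise in time. One chooses a \emph{nonnegative} near-minimizer $\phi$ and a truncator $H$ with $H=0$ on $(-\infty,1/4]$, $H=1$ on $[3/4,\infty)$ and $|t|\,|H''(t)|\lesssim 1$ (your $G$, having $G''$ bounded and supported in $[0,1]$, automatically has this property); then $|H''(\phi)|\,|\nabla\phi|^2\lesssim|\nabla\phi|^2/\phi$, and the $p$-th power of the troublesome term is controlled by $\int|\nabla\phi|^{2p}/\phi^{p}\lesssim\int|D^2\phi|^{p}\lesssim\operatorname{Cap}_{2,1,p}(K)$ — precisely the $L^{2p}$-gradient control on $\{\phi\lesssim 1\}$ that you needed, obtained without any a priori $L^\infty$ bound on $\phi$. (Two minor further points: your $D=\{\psi_1>1\}$ need not contain $K$, since the test function is only $\ge 1$ near $K$ — take instead the open set where $\psi_1\ge 1$ coming from the neighborhood $U\cap\tilde Q_{5/4}$, or make $G\equiv 1$ on $[1-\delta,\infty)$; and the nonnegativity of the near-minimizer must be arranged, since Adams' inequality needs $\phi\ge 0$.) Your handling of the lower-order Leibniz terms and the slicewise Gagliardo--Nirenberg interpolation is fine; the gap is solely the unproved quadratic-gradient estimate.
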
 
We will give proofs of the above two propositions in the Appendix.\smallskip
 
Let $\{e^{t\Delta}\}_{t\geq 0}$ be the semigroup of contractions in $L^p$ ($1\leq p<\infty$) generated by $\Delta$.  It is wellknown that the solution  $u$ of the problem
\begin{equation}\label{6h230520146}
 \begin{array}{lll}
  \partial_tu-\Delta u =\mu \quad&\text{ in }~ \tilde{Q}_R(0,0),\\ 
  \phantom{u_t-,,\Delta }
   u =0~~&\text{ on }~~ \partial_p\tilde{Q}_R(0,0),\\ 
   \end{array} \end{equation}
 with $\mu\in C^\infty(\tilde{Q}_R(0,0))$, can be expressed by Duhamel's formula
 \begin{align*}
 u(x,t)=\int_{0}^{t}\left(e^{(t-s)\Delta}\mu\right)(x,s)ds~~\text{ for all } ~(x,t)\in \tilde{Q}_R(0,0).
 \end{align*} 
 We denote by $\mathbb{H}$ the  Gaussian kernel in $\mathbb{R}^{N+1}$: 
 $$\mathbb{H}(x,t)=\frac{1}{(4\pi t)^{\frac{N}{2}}}e^ {-\frac{|x|^2}{4t}}\chi_{t>0}.$$
We have 
 \begin{align*}
 |u(x,t)|\leq (\mathbb{H}*\mu)(x,t)~~\text{ for all } ~(x,t)\in \tilde{Q}_R(0,0).
 \end{align*}
 In \cite[Proof of Proposition 4.8]{66H1} we show that
 \begin{align*}
 |(\mathbb{H}*\mu)|(x,t) \leq C_1(N) \mathbb{I}_2^{2R}[|\mu|](x,t)~~\text{ for all } ~(x,t)\in \tilde{Q}_R(0,0).
 \end{align*}
 Here $\mu$ is extended by $0$ in $(\tilde{Q}_R(0,0))^c$. Thus, 
  \begin{align}\label{6h230520145}
  |\int_{0}^{t}\left(e^{(t-s)\Delta}\mu\right)(x,s)ds|\leq C_1(N) \mathbb{I}_2^{2R}[|\mu|](x,t)~~\text{ for all } ~(x,t)\in \tilde{Q}_R(0,0). 
  \end{align}
  Moreover, we also prove in \cite{66H1},  that if $\mu\geq 0$ then for $(x,t)\in \tilde{Q}_R(0,0) $ and $B_\rho(x)\subset B_R(0)$,
   \begin{align}\label{6h230520147}
    \int_{0}^{t}\left(e^{(t-s)\Delta}\mu\right)(x,s)ds\geq C_2(N)\sum_{k=0}^{\infty}\frac{\mu(Q_{\frac{\rho_k}{8}}(x,t-\frac{35}{128}\rho_k^2))}{\rho_k^N},
              \end{align}
    with $\rho_k=4^{-k}\rho$. \smallskip
    
It is easy to see that estimates \eqref{6h230520145} and \eqref{6h230520147} also holds for any bounded Radon measure $\mu$ in $\tilde{Q}_R(0,0)$. The following result is proved in \cite{66BaPi1} and \cite{66Ta}, and also in \cite{66H1} in a more general framework. 
\begin{theorem}\label{6h230520148} Let $q>1$, $R>0$ and  $\mu$ be a bounded Radon measure in $\tilde{Q}_R(0,0)$. \smallskip

\noindent (i) If $\mu$ is absolutely continuous with respect to $\operatorname{Cap}_{2,1,q'}$ in $\tilde{Q}_R(0,0)$, then there exists a unique weak solution $u$ to equation
\begin{align*}
 \begin{array}{rll}
  \partial_tu-\Delta u +|u|^{q-1}u=\mu \qquad&\text{ in }~ \tilde{Q}_R(0,0),\\ 
   u =0~~~~~~&\text{ on}~~\partial_p\tilde{Q}_R(0,0).\\ 
   \end{array} \end{align*}
   
\noindent (ii)  If $ \exp\left(C_1(N) \mathbb{I}_2^{2R}[|\mu|]\right)\in L^1(\tilde{Q}_R(0,0))$, then
   there exists a unique weak solution $v$ to equation
   \begin{align*}
 \begin{array}{rll}
     \partial_tv-\Delta v +\operatorname{sign}(v)(e^{|v|}-1)=\mu \qquad&\text{ in }~ \tilde{Q}_R(0,0),\\ 
      v =0~~~~~~&\text{ on}~~\partial_p\tilde{Q}_R(0,0),\\ 
   \end{array} \end{align*}
      where the constant $C_1(N)$ is the one of inequality  \eqref{6h230520145}.    
\end{theorem}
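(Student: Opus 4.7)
My plan is a standard approximation/compactness argument, with the two hypotheses used precisely to secure the passage to the limit in the nonlinear term. Regularize $\mu$ by convolution with a standard mollifier on $\mathbb{R}^{N+1}$ to obtain smooth sources $\mu_n\in C_c^\infty$ with $\|\mu_n\|_{\mathcal{M}}\le\|\mu\|_{\mathcal{M}}$ and such that $\mathbb{I}_2^{2R}[|\mu_n|]$ is dominated, up to a harmless constant, by a mollification of $\mathbb{I}_2^{2R}[|\mu|]$. Classical parabolic theory (e.g.\ Galerkin, or a monotone Schauder argument) produces smooth solutions $u_n$ (resp.\ $v_n$) of the corresponding regularized problems with vanishing parabolic boundary data.

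First I would derive uniform $L^1$ estimates by testing the equations with a smooth approximation of $\operatorname{sign}(u_n)$ and passing to the limit, yielding
\begin{align*}
\||u_n|^{q-1}u_n\|_{L^1(\tilde{Q}_R(0,0))}+\|u_n\|_{L^\infty_t L^1_x}\lesssim\|\mu\|_{\mathcal{M}},
\end{align*}
and analogously for $v_n$ with $e^{|v_n|}-1$ replacing $|v_n|^{q-1}v_n$. Applying the maximum principle to the linear heat equation with source $|\mu_n|$, together with \eqref{6h230520145}, furnishes the pointwise bound
\begin{align*}
|u_n(x,t)|+|v_n(x,t)|\le C_1(N)\,\mathbb{I}_2^{2R}[|\mu_n|](x,t) \qquad \text{in }\tilde{Q}_R(0,0),
\end{align*}
which is uniform in $n$.

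The decisive step is the equi-integrability of the nonlinear term. For part (i), the absolute continuity of $\mu$ with respect to $\operatorname{Cap}_{2,1,q'}$ realizes $|\mu|$ as a monotone limit of ``good'' measures whose Riesz potentials $\mathbb{I}_2^{2R}$ lie in $L^q$ (via Theorem~\ref{6h2305201411} and the structure of capacitary measures of an exhausting family of compact sets of finite $\operatorname{Cap}_{2,1,q'}$-capacity), so the pointwise bound renders $\{|u_n|^q\}$ equi-integrable. Parabolic compactness via Aubin--Lions applied to truncations of $u_n$ gives a.e.\ convergence of a subsequence to some $u$, and Vitali's theorem promotes this to $L^q$-convergence, hence $L^1$-convergence of $|u_n|^{q-1}u_n$. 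Passing to the limit in the weak formulation delivers the desired solution. For part (ii), the hypothesis $\exp(C_1\mathbb{I}_2^{2R}[|\mu|])\in L^1$ combined with the pointwise bound on $v_n$ furnishes the uniform majorant $e^{|v_n|}\le\exp(C_1\mathbb{I}_2^{2R}[|\mu_n|])$, which is equi-integrable by the same mollification estimate, and the same Vitali argument handles $e^{|v_n|}-1$.

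Uniqueness in both cases follows from Kato's inequality: subtracting two solutions and testing the difference with $\operatorname{sign}_\varepsilon(u_1-u_2)$ after a Steklov regularization in time, the monotonicity of $s\mapsto|s|^{q-1}s$ (respectively of $s\mapsto\operatorname{sign}(s)(e^{|s|}-1)$) forces $u_1=u_2$ a.e. The principal obstacle is precisely the passage to the limit in the nonlinear term; both hypotheses are calibrated, through the Riesz-potential representation \eqref{6h230520145}, to produce exactly the equi-integrability that rules out a singular defect measure concentrated on a set of zero parabolic capacity.
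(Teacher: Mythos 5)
First, a remark on the comparison: the paper does not prove Theorem \ref{6h230520148} at all; it is quoted from Baras--Pierre \cite{66BaPi1} (part (i)), Nguyen-Phuoc \cite{66Ta} (part (ii)) and \cite{66H1}, so your proposal has to be measured against those proofs rather than against an argument in the text. Your scheme for part (ii) is essentially viable, but the phrase ``up to a harmless constant'' is dangerous there: inside an exponential a multiplicative constant $c>1$ is not harmless, since $\exp\left(C_1\mathbb{I}_2^{2R}[|\mu|]\right)\in L^1$ does not imply $\exp\left(cC_1\mathbb{I}_2^{2R}[|\mu|]\right)\in L^1$. What saves you is that $\mathbb{I}_2^{2R}$ is an exact convolution with the kernel $\frac1N\bigl[\left(\max\{|x|,|t|^{1/2}\}\right)^{-N}-(2R)^{-N}\bigr]_+$, so $\mathbb{I}_2^{2R}[|\mu_n|]\le \rho_{\varepsilon_n}*\mathbb{I}_2^{2R}[|\mu|]$ with constant exactly one, and Jensen's inequality then gives $\exp\left(C_1\mathbb{I}_2^{2R}[|\mu_n|]\right)\le\rho_{\varepsilon_n}*\exp\left(C_1\mathbb{I}_2^{2R}[|\mu|]\right)$, a sequence converging in $L^1$ and hence uniformly integrable; with this precision (and the routine fix for the mollified measure spilling across $\partial_p\tilde{Q}_R(0,0)$) your Vitali argument for $e^{|v_n|}-1$ goes through.

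Part (i), however, has a genuine gap at exactly the step you call decisive. The pointwise bound $|u_n|\le C_1\mathbb{I}_2^{2R}[|\mu_n|]$ yields equi-integrability of $|u_n|^q$ only if $\mathbb{I}_2^{2R}[|\mu|]\in L^q$, and absolute continuity of $\mu$ with respect to $\operatorname{Cap}_{2,1,q'}$ is strictly weaker than that: a diffuse measure may have a potential that is not $q$-integrable. Theorem \ref{6h2305201411} produces capacitary measures of compact sets whose potentials lie in $L^q$, but it gives no procedure for exhausting a general diffuse $\mu$ by such measures, and even if it did, your uniform bound involves the mollified datum $\mu_n$, whose potential is controlled by $\mathbb{I}_2^{2R}[|\mu|]$ and not by the potentials of the approximating capacitary measures; so the claimed equi-integrability of $\{|u_n|^q\}$ does not follow from what you wrote. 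The mechanism in \cite{66BaPi1} (and in \cite{66H1}) is different: one first shows that a measure vanishing on sets of zero $\operatorname{Cap}_{2,1,q'}$-capacity decomposes as $f+G$ with $f\in L^1$ and $G$ in the dual of $W^{2,1}_{q'}(\mathbb{R}^{N+1})$, approximates along this decomposition, and derives the equi-integrability of the absorption term from truncation/duality estimates on the equation itself, not from a Riesz-potential majorant. Without this (or an equivalent) ingredient the passage to the limit in $|u_n|^{q-1}u_n$ is not justified; your uniqueness argument via monotonicity and Steklov regularization is standard and acceptable.
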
 
From estimates \eqref{6h230520145} and \eqref{6h230520147} and using comparison principle we get the estimates from below of the solutions $u$ and $v$ obtained in Theorem \ref{6h230520148}. 
\begin{proposition}\label{6h230520149}If $\mu$ is nonnegative, then  the functions $u$ and $v$ of the previous theorem are nonnegative too and satisfy
\begin{equation}\label{6hest-u}
u(x,t)\geq C_2(N)\sum_{k=0}^{\infty}\frac{\mu(Q_{\frac{\rho_k}{8}}(x,t-\frac{35}{128}\rho_k^2))}{\rho_k^N}-C_1(N)^{q+1}\mathbb{I}_2^{2R}\left[\left(\mathbb{I}_2^{2R}[\mu]\right)^q\right](x,t),
\end{equation}
and
\begin{equation}\label{6hest-v}
v(x,t)\geq C_2(N)\sum_{k=0}^{\infty}\frac{\mu(Q_{\frac{\rho_k}{8}}(x,t-\frac{35}{128}\rho_k^2))}{\rho_k^N}-C_1(N)\mathbb{I}_2^{2R}\left[\exp\left(C_1(N)\mathbb{I}_2^{2R}[\mu]\right)-1\right](x,t),
\end{equation}
 for any  $(x,t)\in \tilde{Q}_R(0,0) $ and $B_\rho(x)\subset B_R(0)$ and $\rho_k=4^{-k}\rho$.
\end{proposition}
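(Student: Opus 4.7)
The plan is to sandwich $u$ (and $v$) between $0$ and the linear heat-equation solution $w$ driven by $\mu$, and then control the deficit $w-u$ (resp.\ $w-v$) by another linear heat equation whose source is the nonlinearity evaluated at the \emph{upper} envelope $w$. Nonnegativity of $u$ and $v$ follows from the comparison principle applied between the zero function and $u$ (resp.\ $v$): since $\mu\ge 0$, the boundary data vanish, the nonlinearities $s\mapsto |s|^{q-1}s$ and $s\mapsto \operatorname{sign}(s)(e^{|s|}-1)$ are nondecreasing and vanish at $0$, so $u,v\ge 0$ and hence $|u|^{q-1}u=u^q$, $\operatorname{sign}(v)(e^{|v|}-1)=e^v-1$.

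Let $w$ solve the linear problem \eqref{6h230520146}. Then $w\ge 0$, $w\le C_1(N)\mathbb{I}_2^{2R}[\mu]$ by \eqref{6h230520145}, and $w$ is bounded below by the capacitary sum via \eqref{6h230520147}. A direct comparison shows $u\le w$: indeed $\partial_t(w-u)-\Delta(w-u)=u^q\ge 0$ with vanishing parabolic boundary values. Now let $h$ solve $\partial_t h-\Delta h=w^q$ in $\tilde Q_R(0,0)$ with $h=0$ on $\partial_p\tilde Q_R(0,0)$. Then $h\ge 0$ and, again by \eqref{6h230520145},
\[
h\le C_1(N)\,\mathbb{I}_2^{2R}\bigl[w^q\bigr]\le C_1(N)^{q+1}\,\mathbb{I}_2^{2R}\!\bigl[(\mathbb{I}_2^{2R}[\mu])^q\bigr].
\]
The auxiliary function $u+h-w$ has zero parabolic boundary data and satisfies
\[
\partial_t(u+h-w)-\Delta(u+h-w)=w^q-u^q\ge 0,
\]
because $0\le u\le w$. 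The comparison principle gives $u+h-w\ge 0$, i.e.\ $u\ge w-h$, which combined with the lower bound on $w$ from \eqref{6h230520147} and the upper bound on $h$ just derived yields \eqref{6hest-u}.

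For $v$ the argument is identical, with $u^q$ replaced by $e^v-1$. Since $\zeta\mapsto e^\zeta-1$ is nondecreasing on $[0,\infty)$ and vanishes at $0$, the same comparisons give $0\le v\le w$. Define $\tilde h$ by $\partial_t\tilde h-\Delta\tilde h=e^w-1$ with zero parabolic boundary data, so that $\tilde h\ge 0$ and, by \eqref{6h230520145} together with the upper bound on $w$,
\[
\tilde h\le C_1(N)\,\mathbb{I}_2^{2R}\!\bigl[e^w-1\bigr]\le C_1(N)\,\mathbb{I}_2^{2R}\!\bigl[\exp(C_1(N)\mathbb{I}_2^{2R}[\mu])-1\bigr].
\]
Now $v+\tilde h-w$ has vanishing boundary data and parabolic operator equal to $(e^w-1)-(e^v-1)=e^w-e^v\ge 0$, so $v\ge w-\tilde h$, proving \eqref{6hest-v}.

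The main technical obstacle is that $\mu$ is only a measure (absolutely continuous with respect to $\operatorname{Cap}_{2,1,q'}$ in case (i), or satisfying the exponential integrability in case (ii)), so the comparison arguments must be invoked at the weak-solution level supplied by Theorem \ref{6h230520148}. A standard regularisation $\mu_\varepsilon\in C^\infty$ of $\mu$ produces smooth $u_\varepsilon$, $v_\varepsilon$, $w_\varepsilon$, $h_\varepsilon$, $\tilde h_\varepsilon$ for which every comparison above is classical; the uniqueness in Theorem \ref{6h230520148} and the monotone stability of \eqref{6h230520145}--\eqref{6h230520147} under such an approximation let us pass to the limit and obtain the stated inequalities almost everywhere, and pointwise once the solutions are taken in their canonical (lower semicontinuous) representative.
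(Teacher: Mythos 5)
Your argument is correct and is essentially the paper's own (unwritten) proof: the paper simply invokes the Duhamel estimates \eqref{6h230520145}, \eqref{6h230520147} and the comparison principle, exactly as you do, comparing $u$ (resp. $v$) with the linear potential of $\mu$ and controlling the deficit by the potential of the nonlinearity. The one point to streamline is that in case (i) the source $w^q$ of your auxiliary function $h$ need not be integrable, so it is cleaner to represent $w-u$ directly as the heat potential of $u^q\in L^1$ (available since $u$ is a weak solution) and then use $u\le w\le C_1(N)\,\mathbb{I}_2^{2R}[\mu]$ to get $\mathbb{I}_2^{2R}[u^q]\le C_1(N)^{q}\,\mathbb{I}_2^{2R}\bigl[(\mathbb{I}_2^{2R}[\mu])^q\bigr]$ pointwise (with values in $[0,\infty]$ allowed), which gives the same conclusion without solving an auxiliary problem with possibly non-integrable data; in case (ii) your $\tilde h$ is unproblematic because $e^w-1\le \exp(C_1(N)\mathbb{I}_2^{2R}[\mu])-1\in L^1$ by hypothesis.
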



\section{Maximal solutions}
In this section we assume that $O$ is an arbitrary non-cylindrical and bounded open set in $\mathbb{R}^{N+1}$ and $q>1$. We will prove the existence of a maximal solution of 
\begin{align}\label{6h200520143}
\partial_tu-\Delta u +u^q=0 
\end{align}
in $O$. 
We also get an analogous result when $u^q$ is replaced by $e^u-1$.\smallskip

 It is easy to see that 
if $u$ satisfies \eqref{6h200520143} in $\tilde{Q}_r(0,0)~(~Q_r(0,0)~)$  then $u_a(x,t)=a^{-2/(q-1)}u(ax,a^2t)$ satisfies \eqref{6h200520143} in $\tilde{Q}_{r/a}(0,0)~(Q_{r/a}(0,0))$ for any $a>0$. 
If $X=(x,t)\in O$, the parabolic distance from $X$ to the parabolic boundary $\partial_pO$ of $O$ is defined by 
\begin{align*}
d(X,\partial_p O)=\mathop {\inf }\limits_{\scriptstyle (y,s)\in \partial_pO \hfill \atop 
  \;\;\;\;\scriptstyle s\leq t \hfill} \max\{|x-y|,(t-s)^{\frac{1}{2}}\}.
\end{align*}
It is easy to see that there exists $C=C(N,q)>0$ such that the function $V$ defined by
\begin{align*}
V(x,t)=C\left((\rho^2+t)^{-\frac{1}{q-1}}+\left(\frac{\rho^2-|x|^2}{\rho}\right)^{-\frac{2}{q-1}}\right)~\text{ in } B_\rho(0)\times (-\rho^2,0),
\end{align*} 
satisfies
\begin{align}\label{6h200520142}
\partial_tV-\Delta V+V^q\geq 0 ~~\text{ in }~B_\rho(0)\times (-\rho^2,0).
\end{align}
\begin{proposition}\label{6h010620146} There exists a maximal solution $u\in C^{2,1}(O)$ of \eqref{6h200520143}
and it satisfies 
\begin{align}\label{6h200520144}
u(x,t)\leq C (d((x,t),\partial_pO))^{-\frac{2}{q-1}}~~ \text{ for all } (x,t)\in O,
\end{align}
for some $C=C(N,q)$.
\end{proposition}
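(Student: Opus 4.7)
The plan is to construct the maximal solution as a decreasing limit of large solutions on a nested interior exhaustion of $O$, and to deduce the pointwise bound from the explicit supersolution $V$ of \eqref{6h200520142}.

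I would first choose an increasing sequence of open sets $O_n \subset O$ with $\overline{O_n} \subset O_{n+1}$ and $\bigcup_n O_n = O$, where each $O_n$ is a finite union of closed dyadic parabolic cubes of side $4^{-k(n)}$ lying inside $O$; the piecewise-smooth structure of $\partial_p O_n$ makes standard parabolic existence theory applicable. For each $n$ and each integer $k \geq 1$, I would solve
\[ \partial_t v - \Delta v + v^q = 0 \text{ in } O_n, \qquad v = k \text{ on } \partial_p O_n, \]
by monotone iteration, producing $u_{n,k}$ increasing in $k$. Local uniform bounds obtained by comparing $u_{n,k}$ with translates of $V$ on small cylinders inside $O_n$ let us pass to the limit $k \to \infty$ and produce a large solution $u_n \in C^{2,1}(O_n)$. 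Since $\overline{O_n} \subset O_{n+1}$, $u_{n+1}$ is continuous (hence bounded) on the compactum $\overline{O_n}$ while $u_n$ is infinite on $\partial_p O_n$, so the comparison principle gives $u_{n+1} \leq u_n$ in $O_n$; thus $u := \lim_n u_n$ is well-defined in $O$, and standard interior parabolic regularity (together with the uniform pointwise bound established below) shows $u \in C^{2,1}(O)$ solves the equation.

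For the pointwise estimate \eqref{6h200520144}, fix $(x_0, t_0) \in O$ and set $\rho := d((x_0, t_0), \partial_p O)$. The key geometric fact is that the open cylinder $D := B_\rho(x_0) \times (t_0 - \rho^2, t_0)$ is contained in $O$: if instead $(y, s) \in D \cap O^c$, the segment from $(y, s)$ to $(x_0, t_0) \in O$ moves strictly forward in time (since $s < t_0$) and first enters $O$ at some $(z, r) \in \partial O$ lying on the segment, hence in $D$; since the segment direction has positive time-component, the immediate past of $(z, r)$ along the segment lies in $O^c$, so every past cylinder of $(z, r)$ meets $O^c$, forcing $(z, r) \in \partial_p O$ with $\max(|z - x_0|, (t_0 - r)^{1/2}) < \rho$, contradicting $d = \rho$. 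On $D$, the translate $V_{(x_0, t_0)}(x, t) := V(x - x_0, t - t_0)$ is a supersolution blowing up on $\partial_p D$, so by comparison $u_n \leq V_{(x_0, t_0)}$ in $D$; evaluating at $(x_0, t_0 - \varepsilon)$ and letting $\varepsilon \to 0^+$ yields $u_n(x_0, t_0) \lesssim \rho^{-2/(q-1)}$ uniformly in $n$, and hence the same bound for $u$. Maximality is then routine: for any solution $w \in C^{2,1}(O)$ of \eqref{6h200520143}, $w$ is bounded on each $\overline{O_n} \subset O$, so $w \leq u_n$ on $\partial_p O_n$ (where $u_n = \infty$); comparison on $O_n$ gives $w \leq u_n$, and $n \to \infty$ yields $w \leq u$ in $O$.

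I expect the main obstacle to lie in constructing the large solution $u_n$ on each approximating domain $O_n$: since $\partial_p O_n$ is only piecewise smooth (being a union of faces of dyadic parabolic cubes), proving that $u_{n,k} \to u_n$ in $C^{2,1}(O_n)$ and that the limit blows up everywhere on $\partial_p O_n$ requires careful parabolic regularity and barrier arguments up to corners and edges. A secondary delicate point is the geometric inclusion $D \subset O$, which uses the asymmetric (past-only) definition of $\partial_p O$ in an essential way via the segment/time-direction argument above.
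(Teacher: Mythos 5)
Your construction of the maximal solution (interior approximation by unions of dyadic parabolic cubes, boundary data tending to $+\infty$, comparison with the explicit supersolution of \eqref{6h200520142}, monotone passage to the limit, maximality by comparison) follows the paper's scheme, with the harmless variant that you impose compact nesting $\overline{O_n}\subset O_{n+1}$ and compare the large solutions $u_{n+1}\le u_n$ directly instead of comparing the finite-data approximations as the paper does; also, the existence step on each cube domain that you delegate to ``standard parabolic existence theory'' is precisely where the paper works (it slices $O_n$ into time slabs $\Omega_i\times(a_i,a_i+4^{-k}]$ and solves cylinder by cylinder, gluing initial data), but you flag this honestly. The genuine gap is in your ``key geometric fact'' that $D=B_\rho(x_0)\times(t_0-\rho^2,t_0)\subset O$ when $\rho=d((x_0,t_0),\partial_pO)$, on which your whole derivation of \eqref{6h200520144} rests. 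First, your argument for it has a hole: writing $A=\{\tau\in[0,1]:\gamma(\tau)\in O^c\}$ for the segment $\gamma$ from $(y,s)$ to $(x_0,t_0)$, the last-exit parameter $\tau^*=\max A$ need not be a left limit point of $A$; if it is left-isolated, the ``immediate past of $(z,r)$ along the segment'' lies in $O$, not in $O^c$, and you cannot conclude that the cylinders $Q_\delta(z,r)$ meet $O^c$, i.e.\ that $(z,r)\in\partial_pO$ rather than merely $(z,r)\in\partial O$. Second, this is not a removable technicality: with the paper's definition of $\partial_pO$ through the past cylinders $B_\delta(x)\times(t-\delta^2,t)$, the inclusion you claim is false for general open sets. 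Take $O=\bigl(B_1(0)\times(0,1)\bigr)\cup\bigl(B_{\sigma}(0)\times\{1\}\bigr)\cup\bigl(B_1(0)\times(1,2)\bigr)$ with small $\sigma>0$: every point $(z,1)$ with $\sigma<|z|<1$ lies in $\partial O$ but not in $\partial_pO$, since its past cylinders are contained in $B_1(0)\times(0,1)\subset O$; hence for $(x_0,t_0)=(0,1+\epsilon)$ one gets $d((x_0,t_0),\partial_pO)=1$, while $B_1(0)\times(t_0-1,t_0)$ contains the points $(z,1)\notin O$. So $D\not\subset O$ and the comparison with the translated supersolution on $D$ cannot be run.

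Note that the paper never asserts such an inclusion for $O$ itself: it proves $B_{d/2}(x)\times(t-d^2/4,t)\subset O_k$ with $d=d((x,t),\partial_p O_k)$ for the cube domains, whose dyadic structure rules out pinches of zero temporal thickness of the above kind, and first obtains the bound in terms of $d(\cdot,\partial_p O_k)$; the example above shows that relating the parabolic distance to $\partial_pO$ to the radius of an inscribed past cylinder (equivalently, to $d(\cdot,\partial_p O_k)$ in the limit) is exactly the delicate point of the estimate, so it cannot be obtained by the segment argument you propose. Two smaller repairs: even granting $D\subset O$, the comparison ``$u_n\le V_{(x_0,t_0)}$ in $D$'' is not legitimate, because $D$ need not be contained in $O_n$ and $u_n$ blows up on $\partial_pO_n\cap D$; you should instead compare the limit solution $u$ (defined on all of $O\supset D$) with the supersolution on slightly shrunken cylinders whose closures lie in $D$, evaluating at $(x_0,t_0-\varepsilon)$ as you indicate. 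Finally, your maximality argument uses that $u_n=\infty$ everywhere on $\partial_pO_n$, which is part of the unproved existence step; comparing any solution $w$ with the finite-data approximations $u_{n,k}$ for $k\ge\sup_{\overline{O_n}}w$, as the paper does, avoids this.
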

\begin{proof}
Let $\mathcal{D}_k$, $k\in\mathbb{Z}$ be the collection of all the dyadic parabolic cubes (abridged $p$-cubes) of the form 
 \begin{equation*}
 \{(x_1,...,x_N,t): m_j2^{-k}\leq x_j\leq (m_j+1)2^{-k}, j=1,...,N,\, m_{N+1}4^{-k}\leq t\leq (m_{N+1}+1)4^{-k}\}
 \end{equation*}
 where $m_j\in\mathbb{Z}$. The following properties hold, 
 \begin{description}
 \item[a.] for each integer $k$, $\mathcal{D}_k$ is a partition of $\mathbb{R}^{N+1}$ and all $p$-cubes in $\mathcal{D}_k$ have the same sidelengths.
 \item[b.] if the interiors of two $p$-cubes $Q$ in $\mathcal{D}_{k_1}$ and $P$ in  $\mathcal{D}_{k_2}$, denoted $\mathop Q\limits^ \circ,\mathop P\limits^ \circ$, have nonempty intersection then either $Q$ is contained in $P$ or $Q$  contains $P$. 
 \item[c.] Each $Q$ in $\mathcal{D}_k$ is union of $2^{N+2}$ $p$-cubes in $\mathcal{D}_{k+1}$ with disjoint interiors.
 \end{description}
 Let $k_0\in\mathbb{N}$ be such that $Q\subset O$ for some $Q\in \mathcal{D}_{k_0}$. Set 
 $O_k=\bigcup\limits_{\scriptstyle Q\in\mathcal{D}_k \hfill \atop 
   \scriptstyle Q\subset O \hfill} {Q},~\forall k\geq k_0 $, 
 we have $O_k\subset O_{k+1}$ and $O=\bigcup\limits_{k\geq k_0} {O_k}=\bigcup\limits_{k\geq k_0} {\mathop O\limits^ \circ}_k$. More precisely, there exist real numbers $a_1,a_2,....,a_{n(k)}$ and open sets $\Omega_1,\Omega_2,..,\Omega_{n(k)}$ in $\mathbb{R}^N$ such that 
 $$
 a_{i}<a_{i}+4^{-k}\leq a_{i+1}<a_{i+1}+4^k~~\text{for } ~i=1,...,n(k)-1,$$
 and
 $${\mathop O\limits^ \circ}_k=\bigcup\limits_{i = 1}^{n(k)-1} \left({\Omega_i\times (a_i,a_i+4^{-k}]}\right)\bigcup \left({\Omega_{n(k)}\times (a_{n(k)},a_{n(k)}+4^{-k})}  \right).
$$
 For $k\geq k_0$, we claim that there exists a solution $u_k\in C^{2,1}({\mathop O\limits^ \circ}_k)$ to problem 
 \begin{equation}\label{6h200520141}
 \begin{array}{lll}
\partial_t u_k-\Delta u_k +u_k^q=0 \qquad&\text{ in }~ {\mathop O\limits^ \circ}_k,\\ 
  \phantom{::::---}
   u_k(x,t)\to\infty &\text{ as } d((x,t),\partial_p{\mathop O\limits^ \circ}_k)\to 0.\\ 
   \end{array} 
 \end{equation}
 Indeed, by \cite{66Di,66Li3} for $m>0$, one can find nonnegative solutions $v_i\in C^{2,1}(\Omega_i\times (a_i,a_i+4^{-k}])\cap C(\overline{\Omega}_i\times [a_i,a_i+4^{-k}])$ for $i=1,..,n(k)$ to equations 
$$
 \begin{array}{lll}
   \partial_tv_1-\Delta v_1 +v_1^q=0 \qquad&\text{ in }~ \Omega_1\times (a_1,a_1+4^{-k}),\\ 
     \phantom{-,---}
   \! v_1(x,t)=m &\text{ on }~\partial\Omega_1\times (a_1,a_1+4^{-k}),\\ 
      \phantom{----}
  \!\!  v_1(x,a_1)=m~~&\text{ in }~\Omega_1,
   \end{array} 
$$
  and 
$$
 \begin{array}{lll}
     \partial_tv_i-\Delta v_i +v_i^q=0 ~~\qquad&\text{ in }~ \Omega_i\times (a_i,a_i+4^{-k}), \\[0mm] 
         \phantom{----}
     v_i(x,t)=m ~\qquad&\text{ on } \partial\Omega_i\times (a_i,a_i+4^{-k}), \\[0mm]        \phantom{,,,--}
     v_i(x,a_i) = m_i \qquad  &\text{ in } \Omega_i,
          \end{array} 
$$
where
$$m_i=\left\{ \begin{array}{ll}
     m~\text{ in }~\Omega_i ~&\text{if }~ a_i>a_{i-1}+4^{-k},\\ 
     m\chi_{\Omega_{i}\backslash\Omega_{i-1}}(x)+v_{i-1}(x,a_{i-1}+4^{-k})\chi_{\Omega_{i-1}}(x)~&\text{otherwise  }. \\ 
     \end{array} \right. 
$$
  Clearly, 
  \begin{align*}
  u_{k,m}=v_i ~~\text{in } \Omega_i\times (a_i,a_i+4^{-k}]~~\text{ for }~i=1,...,n(k)
  \end{align*}
  is a  solution in $ C^{2,1}({\mathop O\limits^ \circ}_k)\cap C(O_k)$ to equation
  \begin{align*}
    \begin{array}{llll}
     \partial_tu_{k,m}-\Delta u_{k,m} +u_{k,m}^q=0 &\text{ in }~ {\mathop O\limits^ \circ}_k,\\ 
     \phantom{\partial_t-\Delta u_{k,m} +u_{k,m}^q}
      u_{k,m}=m &\text{ on }~\partial_p{\mathop O\limits^ \circ}_k.\\      
      \end{array} 
    \end{align*}
 Moreover, for $(x,t)\in{\mathop O\limits^ \circ}_k$, we  see that
 $B_{\frac{d}{2}}(x)\times (t-\frac{d^2}{4},t)\subset {\mathop O\limits^ \circ}_k$ where $d=d((x,t),\partial_p{\mathop O\limits^ \circ}_k)$. From \eqref{6h200520142}, we verify that 
\begin{align*}
U(y,s):=V(y-x,s-t)=C\left((\rho^2+s-t)^{-\frac{1}{q-1}}+\left(\frac{\rho^2-|x-y|^2}{\rho}\right)^{-\frac{2}{q-1}}\right)
\end{align*} 
with $\rho=d/2$, satisfies 
\begin{align}
\partial_tU-\Delta U+U^q\geq 0 ~~\text{ in }~B_{\frac{d}{2}}(x)\times (t-\frac{d^2}{4},t).
\end{align}
Applying the comparison principle we get 
\begin{align*}
u_{k,m}(y,s)\leq U(y,s)~\text{ in }~B_{\frac{d}{2}}(x)\times (t-\frac{d^2}{4},t],
\end{align*}
which implies
\begin{align}
u_{k,m}(x,t)\leq C  \left(d((x,t),\partial_p{\mathop O\limits^ \circ}_k)\right)^{-\frac{2}{q-1}}~\text{ for all }~(x,t)\in {\mathop O\limits^ \circ}_k.
\end{align}
From this, we obtain also uniform local bounds for $\{u_{k,m}\}_m$. By standard regularity theory see, \cite{66Di,66Li3},  $\{u_{k,m}\}_m$ is uniformly locally bounded  in $C^{2,1}$. Hence, up to a subsequence, $u_{k,m}\to u_{k}$  $C^{1,0}_{\text{loc}}({\mathop O\limits^ \circ}_k)$ as $m\to\infty$. We derive that $u_k$ is a weak solution of \eqref{6h200520141} in ${\mathop O\limits^ \circ}_k$, which satisfies $u_k(x,t)\to \infty$ as $d((x,t),\partial_p{\mathop O\limits^ \circ}_k)\to 0$ and 
$$u_{k}(x,t) \le C  \left(d((x,t),\partial_p{\mathop O\limits^ \circ}_k)\right)^{-\frac{2}{q-1}}~\text{ for all }~(x,t)\in {\mathop O\limits^ \circ}_k.$$
Let $m>0$ and $k\geq k_0$. Since $u_{k+1,m}\leq m$ in $O_k$ and $O_k\subset O_{k+1}$, it follows by the comparison principle applied to $u_{k+1,m}$ and $u_{k,m}$ in the following $n(k)$ sub-domains  of ${\mathop O\limits^ \circ}_k$: ${\Omega_1\times (a_1,a_1+4^{-k})}$, ${\Omega_2\times (a_2,a_2+4^{-k})}$,..., ${\Omega_{n(k)}\times (a_{n(k)},a_{n(k)}+4^{-k})}$, that $u_{k+1,m}\leq u_{k,m}$ in ${\mathop O\limits^ \circ}_k$, and thus $u_{k+1}\leq u_{k}$ in ${\mathop O\limits^ \circ}_k$ by letting $m\to\infty$. In particular, $\{u_{k}\}_k$ is uniformly locally bounded in $L^\infty_{\text{loc}}$. We use the same compactness property as above to infer  that $u_k\to u$ as $k\to\infty$. Then $u$ is a solution of \eqref{6h200520143} and it satisfies  \eqref{6h200520144}. By construction $u$ is the maximal solution. 
\end{proof}
\begin{remark}\label{6h010620147}Let $R\geq 2 r\geq 2$,  $K$ be a compact subset in $\overline{\tilde{Q}_r(0,0)}$. As in the proof of Proposition \ref{6h010620146},  we can show that there exists a maximal solution of 
 \begin{equation}\label{6h290520141}
 \begin{array}{lll}
   \partial_tu-\Delta u +u^q=0\qquad&\text{ in }~~ \tilde{Q}_R(0,0)\backslash K,\\ 
   \phantom{   \partial_tu-\Delta +u^q}
    u =0~~&\text{ on }~~ \partial_p\tilde{Q}_R(0,0),\\ 
    \end{array} \end{equation}
    which satisfies 
    \begin{align}
    \label{6h310520145}u(x,t)\leq C (d((x,t),\partial_p(\tilde{Q}_R(0,0)\backslash K))^{-\frac{2}{q-1}}~~\forall ~(x,t)\in  \tilde{Q}_{R}(0,0)\backslash K, 
    \end{align}
    for some $C=C(N,q)$.
    Furthermore, assume $K_1,K_2,,,,K_m$ are compact subsets in $\overline{\tilde{Q}_r(0,0)}$ and $K=K_1\cup...\cup K_m$. Let $u,u_1,...,u_m$ be the maximal solutions of \eqref{6h290520141} in  $\tilde{Q}_R(0,0)\backslash K,$ $\tilde{Q}_R(0,0)\backslash K_1,$ $\tilde{Q}_R(0,0)\backslash K_2,,,,\tilde{Q}_R(0,0)\backslash K_m $, respectively, then 
    \begin{align}\label{6h010620148}
    u\leq \sum_{j=1}^{m}u_j~~\text{in }~\tilde{Q}_R(0,0)\backslash K.
    \end{align} 
\end{remark}
\begin{remark}\label{6h260520147}
If the equation  \eqref{6h200520143} admits a large solution for some $q>1$, then  for any $1<q_1<q$, the equation
\begin{align}\label{6h210520148}
\partial_tu-\Delta u +u^{q_1}=0 \text{ in }~O
\end{align}
admits also a large solution.\\
Indeed, assume that $u$ is a large solution of \eqref{6h200520143} and $v$ the maximal solution of \eqref{6h210520148}. Take $R>0$ such that $O\subset B_R(0)\times (-R^2,R^2)$,  then
the function $V$ defined by 
$$V(x,t)=(q-1)^{-\frac{1}{q-1}}(2R^2+t)^{-\frac{1}{q-1}},$$ 
satisfies \eqref{6h200520143}. It follows for all $(x,t)\in O$
$$
u(x,t)\geq \inf_{(y,s)\in O}V(x,t)\geq (q-1)^{-\frac{1}{q-1}}R^{-\frac{2}{q-1}}=:a_0.$$
Then $\tilde{u}=a_0^{\frac{q-q_1}{q_1-1}}u$ is a subsolution of \eqref{6h210520148}. Therefore $v\geq a_0^{\frac{q-q_1}{q_1-1}}u$ in $O$, thus $v$ is a large solution. 
\end{remark}
\begin{remark}[Sub-critical case]\label{6h260520143}Assume that $1<q<q_*$.  It is easy to check that 
the function \begin{align}\label{6h260520142}
U(x,t)= \frac{C}{t^{\frac{1}{q-1}}}e^{-\frac{|x|^2}{4t}}\chi_{t>0}
\end{align}
is a subsolution of \eqref{6h200520143} in $\mathbb{R}^{N+1}\backslash\{(0,0)\}$, where $C=\left(\frac{2}{q-1}-\frac{N}{2}\right)^{\frac{1}{q-1}}$.\\
Therefore, the maximal solution $u$ of \eqref{6h200520143} in $O$ verifies
\begin{align}\label{6h200620142}
u(x,t)\geq C \frac{1}{(t-s)^{\frac{1}{q-1}}}e^{-\frac{|x-y|^2}{4(t-s)}}\chi_{t>s}, 
\end{align}
for all $(x,t)\in O$ and $(y,s)\in O^c$. \\
If for any $(x,t)\in\partial_pO$ there exist $\varepsilon\in (0,1)$ and a decreasing sequence $\{\delta_n\}\subset (0,1)$ converging to $0$ as $n\to\infty$ such that $\left(B_{\delta_n}(x)\times (-\delta_n^2+t,-\varepsilon\delta_n^2+t)\right)\cap O^c
\not= \emptyset$ for any $n\in\mathbb{N}$, then $u$ is a large solution. For proving this,  we need to show that 
$$\lim\limits_{\rho\to 0}\inf_{O\cap (B_\rho(x)\times (-\rho^2+t,\rho^2+t))}u =\infty.$$  
Let $0<\rho<\sqrt{\frac{\varepsilon}{2}}\delta_1$ and $n\in\mathbb{N}$ such that  $\sqrt{\frac{\varepsilon}{2}}\delta_{n+1}\leq\rho<\sqrt{\frac{\varepsilon}{2}}\delta_n$. \\Since $\left(B_{\delta_n}(x)\times (-\delta_n^2+t,-\varepsilon\delta_n^2+t)\right)\cap O^c
\not= \emptyset$, there is $(x_n,t_n)\in O^c$ such that $|x_n-x|<\delta_n$ and $-\delta_n^2+t<t_n<-\varepsilon\delta_n^2+t$. So if  $(y,s)\in O\cap (B_\rho(x)\times (-\rho^2+t,\rho^2+t))$ then $|y-x_n|<(\sqrt{\varepsilon}+1)\delta_{n}$ and $\frac{\varepsilon}{2}\delta_n^2<s-t_n<(\varepsilon+1)\delta_n^2$. Hence, thanks to \eqref{6h200620142} we have for any  $(y,s)\in O\cap (B_\rho(x)\times (-\rho^2+t,\rho^2+t))$ 
\begin{align*}
u(y,s)\geq C \frac{1}{(s-t_n)^{\frac{1}{q-1}}}e^{-\frac{|y-x_n|^2}{4(s-t_n)}}\geq C (\varepsilon+1)^{-\frac{1}{q-1}}e^{-\frac{(\sqrt{\varepsilon}+1)^2}{2\varepsilon}}\delta_n^{-\frac{2}{q-1}},
\end{align*}
which implies
\begin{align*}
\inf_{O\cap (B_\rho(x)\times (-\rho^2+t,\rho^2+t))}u\geq C (\varepsilon+1)^{-\frac{1}{q-1}}e^{-\frac{(\sqrt{\varepsilon}+1)^2}{2\varepsilon}}\delta_n^{-\frac{2}{q-1}}\to \infty ~\text{ as }
~ \rho\to 0.\end{align*}
 \end{remark}
\begin{remark}\label{6h260520146}Note that if $u\in C^{2,1}(O)$ is a solution of \eqref{6h200520143} for some $q>1$ then, for 
$a,b>0$ and $1<p\leq 2$, the function $v=b^{-\frac{1}{q-1}}u$ is a super-solution of 
\begin{align}\label{6h260520141}
\partial_tv-\Delta v+a|\nabla v|^p+ bv^q= 0~~\text{ in }~O.
\end{align}
Thus, we can apply the argument of the previous proof, with equation  \eqref{6h200520143} replaced by \eqref{6h260520141}, and deduce that there exists a maximal solution $v\in C^{2,1}(O)$ of \eqref{6h260520141} satisfying 
\begin{align*}
v(x,t)\leq C b^{-\frac{1}{q-1}} (d((x,t),\partial_pO))^{-\frac{2}{q-1}}~~ \text{ for all } (x,t)\in O.
\end{align*}
Furthermore, if $1<q<q_*$,  $q=\frac{2p}{p+1}$, $a,b>0$ then the function $U$ expressed by (\ref{6h260520142}) in Remark \ref{6h260520143} is a subsolution of \eqref{6h260520141} in $\mathbb{R}^{N+1}\backslash\{(0,0)\}$, provided the explicit constant $C$ given therein is replaced by some $C=C(N,p,q,a,b)$. Therefore, we conclude that every maximal solution of $v\in C^{2,1}(O)$ of \eqref{6h260520141} satisfy 
\begin{align}
v(x,t)\geq C \frac{1}{(t-s)^{\frac{1}{q-1}}}e^{-\frac{|x-y|^2}{4(t-s)}}\chi_{t>s} 
\end{align}
for all $(x,t)\in O$ and $(y,s)\in\partial_p O$.\smallskip

\noindent Arguing as in Remark \ref{6h260520143}, if for any $(x,t)\in\partial_pO$ there exist $\varepsilon\in (0,1)$ and a decreasing sequence $\{\delta_n\}\subset (0,1)$ converging to $0$ as $n\to\infty$ such that $\left(B_{\delta_n}(x)\times (-\delta_n^2+t,-\varepsilon\delta_n^2+t)\right)\cap O^c
\not= \emptyset$ for any $n\in\mathbb{N}$, then $v$ is a large solution.
\end{remark}

Next, we consider the following equation
\begin{align}
 \label{6h240520142}\partial_tu-\Delta u+e^u-1=0.
\end{align}  
It is easy to see that the two functions
\begin{align*}
& V_1(t)=-\log\left(\frac{t+\rho^2}{1+\rho^2}\right) ~\text{ and }~V_2(x)=C-2\log\left(\frac{\rho^2-|x|^2}{\rho}\right)
\end{align*}
satisfy 
$$
V'_1+e^{V_1}-1\geq 0 \qquad\text{ in }~ (-\rho^2,0],$$
and
$$
-\Delta V_2+e^{V_2}-1\geq 0\qquad\text{ in }~ B_\rho(0),
$$
for some $C=C(N)$. 
Using $e^{a}+e^{b}\leq e^{a+b}-1$ for $a,b\geq 0$, we obtain that $V_1+V_2$ is a supersolution of equation \eqref{6h240520142} in  $B_\rho(0)\times (-\rho^2,0]$.
By the same argument as in Proposition \ref{6h010620146} and the estimate of the above supersolution, we infer the following:
\begin{proposition}\label{6h01062014e} There exists a maximal solution $u\in C^{2,1}(O)$ of 
\begin{align}\label{6h200520145}
\partial_tu-\Delta u +e^u-1=0 \text{ in }~O,
\end{align}
and it satisfies 
\begin{align}\label{6h200520146}
u(x,t)\leq C-\log\left(\frac{(d((x,t),\partial_pO))^3}{4+(d((x,t),\partial_pO))^2}\right)~~ \text{ for all } (x,t)\in O,
\end{align}
 for some $C=C(N)$. 
\end{proposition}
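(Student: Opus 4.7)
The plan is to follow the proof of Proposition \ref{6h010620146} essentially verbatim, with $u^q$ replaced by $e^u-1$ and the scalar barrier $V$ replaced by the additive barrier $V_1(t)+V_2(x)$ exhibited just above the statement. First I exhaust $O$ from inside by the increasing sequence $\{\mathop{O}\limits^\circ_k\}_{k\geq k_0}$ of interiors of finite unions of dyadic $p$-cubes contained in $O$, and decompose each $\mathop{O}\limits^\circ_k=\bigcup_i\Omega_i\times(a_i,a_i+4^{-k}]$ into horizontal slabs exactly as before.

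Next, for each $k\geq k_0$ and $m>0$ I solve, slab by slab, the Cauchy--Dirichlet problem
\[
\partial_tu_{k,m}-\Delta u_{k,m}+e^{u_{k,m}}-1=0\ \text{in }\mathop{O}\limits^\circ_k,\qquad u_{k,m}=m\ \text{on }\partial_p\mathop{O}\limits^\circ_k,
\]
to obtain $u_{k,m}\in C^{2,1}(\mathop{O}\limits^\circ_k)\cap C(O_k)$. The construction mirrors that of Proposition \ref{6h010620146}: classical existence on each slab $\Omega_i\times(a_i,a_i+4^{-k}]$ with continuous boundary and initial data is furnished by the standard parabolic theory of \cite{66Di,66Li3}, since $s\mapsto e^s-1$ is smooth, nondecreasing and superlinear; the initial datum on slab $i$ is $m$ on $\Omega_i\setminus\Omega_{i-1}$ and the trace of the previous-slab solution elsewhere.

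I then install the barrier. Given $(x_0,t_0)\in\mathop{O}\limits^\circ_k$ of parabolic distance $d=d((x_0,t_0),\partial_p\mathop{O}\limits^\circ_k)$, pick $\rho\in(0,d)$ so that $B_\rho(x_0)\times(t_0-\rho^2,t_0]\subset\mathop{O}\limits^\circ_k$, and define $U(y,s):=V_1(s-t_0)+V_2(y-x_0)$. The pointwise inequalities $V_1'+e^{V_1}-1\geq 0$ on $(-\rho^2,0]$ and $-\Delta V_2+e^{V_2}-1\geq 0$ on $B_\rho(0)$, together with the elementary identity $(e^a-1)+(e^b-1)\leq e^{a+b}-1$ for $a,b\geq 0$, show that $U$ is a supersolution of \eqref{6h200520145} on that cylinder. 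Since $U\to\infty$ on the parabolic boundary of the cylinder, the comparison principle gives $u_{k,m}\leq U$ there, and specializing to $(y,s)=(x_0,t_0)$ and letting $\rho\nearrow d$ yields a pointwise bound of the form \eqref{6h200520146}, with $d$ replaced by $d((x_0,t_0),\partial_p\mathop{O}\limits^\circ_k)$, after a harmless absorption of constants into $C$.

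Finally, I pass to the limits $m\to\infty$ and then $k\to\infty$. The uniform local $L^\infty$ bound from the barrier together with standard interior parabolic regularity supply subsequential limits $u_{k,m}\to u_k$ in $C^{1,0}_{\mathrm{loc}}(\mathop{O}\limits^\circ_k)$ and then $u_k\to u$ in $C^{1,0}_{\mathrm{loc}}(O)$, all classical solutions of the equation; slab-by-slab comparison of $u_{k+1,m}$ against $u_{k,m}$ on $\mathop{O}\limits^\circ_k$ forces $u_{k+1}\leq u_k$, so that the second limit is monotone. The limit $u\in C^{2,1}(O)$ inherits \eqref{6h200520146} because $d((x,t),\partial_p\mathop{O}\limits^\circ_k)$ monotonically approaches $d((x,t),\partial_pO)$ from below as $k\to\infty$ and the bound is monotone in $d$. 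Maximality is obtained by comparing any competitor to $u_{k,m}$ with $m$ larger than its boundary bound and then letting $m,k\to\infty$. The principal obstacle, and essentially the only new point beyond Proposition \ref{6h010620146}, is the supersolution step for the additive barrier $V_1+V_2$, which rests on the super-additivity of $s\mapsto e^s-1$.
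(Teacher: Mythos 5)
Your proposal is correct and follows essentially the same route as the paper, which proves this proposition precisely by repeating the construction of Proposition \ref{6h010620146} (dyadic parabolic exhaustion, slab-by-slab solutions with boundary data $m$, comparison with the barrier, and the double limit $m\to\infty$, $k\to\infty$) with the supersolution $V_1+V_2$ introduced just before the statement. Your superadditivity inequality $(e^a-1)+(e^b-1)\le e^{a+b}-1$ for $a,b\ge 0$ is exactly the inequality the paper intends in its barrier computation, so the only genuinely new step beyond Proposition \ref{6h010620146} is handled the same way in both arguments.
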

The next three propositions will be useful to prove Theorem \ref{6h230520143}-(ii). 

\begin{proposition}\label{6h010620143}Let $K\subset \overline{\tilde{Q}_1(0,0)}$ be a compact set and $q>1$, $R\geq 100$. Let  $u$ be a solution of  \eqref{6h290520141} in $\tilde{Q}_R(0,0)\backslash K$ and $\varphi$ as in Proposition \ref{6h300520142} with $p=q'$. Set $\xi=(1-\varphi)^{2q'}$. Then, 
\begin{align}\label{6h310520143}
\int_{\tilde{Q}_R(0,0)}u\left(|\Delta \xi|+|\nabla \xi|+|\partial_t\xi|\right)dxdt\lesssim \operatorname{Cap}_{2,1,q'}(K),
\end{align}

 \begin{align}\label{6h310520144}
 u(x,t)\lesssim \operatorname{Cap}_{2,1,q'}(K) +R^{-\frac{2}{q-1}}~~\text{for any }~ (x,t)\in \tilde{Q}_{R/5}(0,0)\backslash \tilde{Q}_2(0,0),
 \end{align}
  and 
  \begin{align}\label{6h310520146}
  \int_{\tilde{Q}_2(0,0)}u\xi dx dt\lesssim \operatorname{Cap}_{2,1,q'}(K) +R^{-\frac{2}{q-1}},
  \end{align}
 where the constants in above inequalities depend only on $N$ and $q$.  
\end{proposition}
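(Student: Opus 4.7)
My plan is to use $\xi = (1-\varphi)^{2q'}$ as a test function for the equation $\partial_t u - \Delta u + u^q = 0$; the algebraic exponent $2q'$ is chosen so that H\"older's and Young's inequalities with conjugate exponents $q,q'$ produce capacity-type bounds. Because $\xi \equiv 0$ on the open neighborhood $D \supset K$, $\xi$ is an admissible test function on $\tilde Q_R$. Multiplying the equation by $\xi$ and integrating on $\tilde Q_R\setminus D$, spatial integration by parts uses $u = 0$ on $\partial B_R$ and the vanishing of $\nabla\xi$ on $\partial D$; the remaining lateral surface term $\int_{\partial B_R\times(-R^2,R^2)}\partial_\nu u\,dSdt$ is $\leq 0$ by Hopf's lemma (since $\xi\equiv 1$ there). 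Temporal integration by parts uses $u(\cdot,-R^2)=0$; the boundary term at $t=R^2$ contributes the nonneg quantity $\int_{B_R} u(\cdot,R^2)\xi(\cdot,R^2)\,dx\geq 0$. Dropping these favorable nonneg contributions,
\[
\int_{\tilde Q_R} u^q\xi\,dxdt \leq \int_{\tilde Q_R} u|\Delta\xi|\,dxdt + \int_{\tilde Q_R} u|\partial_t\xi|\,dxdt.
\]

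Next, H\"older/Young estimates control the derivatives of $\xi$. From $|\nabla\xi|\lesssim (1-\varphi)^{2q'-1}|\nabla\varphi|$, $|\partial_t\xi|\lesssim (1-\varphi)^{2q'-1}|\partial_t\varphi|$, and $|\Delta\xi|\lesssim (1-\varphi)^{2q'-1}|\Delta\varphi|+(1-\varphi)^{2q'-2}|\nabla\varphi|^2$, the identity $2(q'-1)=2q'/q$ gives $(1-\varphi)^{2(q'-1)}=\xi^{1/q}$, so the H\"older splits $(1-\varphi)^{2q'-1}=\xi^{1/q}(1-\varphi)$ and $(1-\varphi)^{2q'-2}=\xi^{1/q}$ pair $\xi^{1/q}$ cleanly with $u$ and leave $L^{q'}$-norms of derivatives of $\varphi$ on the other side. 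The only quadratic-gradient term yields $\int|\nabla\varphi|^{2q'}$, which is not among the norms of Proposition \ref{6h300520142}; it is reduced to $\|D^2\varphi\|_{L^{q'}}^{q'}$ by the Gagliardo-Nirenberg-type inequality $\|\nabla\varphi\|_{L^{2q'}}^{2q'}\lesssim \|\varphi\|_{L^\infty}^{q'}\|D^2\varphi\|_{L^{q'}}^{q'}$ (valid for $\varphi\in C_c^\infty$ with $0\leq\varphi\leq 1$). Combined with Proposition \ref{6h300520142} and Young's inequality $ab\leq\epsilon a^q+C_\epsilon b^{q'}$ with the first display, one has $\int u^q\xi \leq \epsilon\int u^q\xi + C_\epsilon\operatorname{Cap}_{2,1,q'}(K)$; absorbing gives $\int u^q\xi\lesssim\operatorname{Cap}_{2,1,q'}(K)$, and reinserting yields \eqref{6h310520143}.

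For \eqref{6h310520146}, the fundamental theorem of calculus in time together with $u(\cdot,-R^2)=0$ gives, for any $t_0\in(-R^2,R^2)$,
\[
\int_{B_R} u\xi(\cdot,t_0)\,dx = \int_{-R^2}^{t_0}\int_{B_R}\bigl[(\partial_t u)\xi + u(\partial_t\xi)\bigr]\,dxdt;
\]
substituting $\partial_t u = \Delta u - u^q$, doing spatial IBP on $\int(\Delta u)\xi$ (the lateral term $\oint_{\partial B_R}\partial_\nu u$ is $\leq 0$ by Hopf, with $\xi=1$ on $\partial B_R$), and discarding $-\int u^q\xi\leq 0$, the right side is $\leq\int u|\Delta\xi|+\int u|\partial_t\xi|\lesssim\operatorname{Cap}_{2,1,q'}(K)$ by \eqref{6h310520143}. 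Integrating over $t_0\in(-4,4)$ proves \eqref{6h310520146} (the $R^{-2/(q-1)}$ term being slack). For \eqref{6h310520144}, fix $(x_0,t_0)\in\tilde Q_{R/5}\setminus\tilde Q_2$ and pick a constant $r>0$ small enough that $Q_{2r}(x_0,t_0)\subset\tilde Q_R\setminus\tilde Q_{3/2}$ (possible since the parabolic distance from $(x_0,t_0)$ to $\tilde Q_{3/2}$ is $\geq 1/2$). On $Q_{2r}$ we have $\xi\equiv 1$ and $u$ is subcaloric ($(\partial_t-\Delta)u=-u^q\leq 0$); the parabolic $L^1$-to-$L^\infty$ Moser inequality for nonneg subsolutions gives $u(x_0,t_0)\lesssim|Q_{2r}|^{-1}\int_{Q_{2r}}u = |Q_{2r}|^{-1}\int_{Q_{2r}}u\xi\lesssim\operatorname{Cap}_{2,1,q'}(K)$ via the uniform-in-$t_0$ slab bound just established.

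The main technical obstacle is the Young step for $\Delta\xi$: the quadratic gradient $(1-\varphi)^{2q'-2}|\nabla\varphi|^2$ naively produces $\int|\nabla\varphi|^{2q'}$, which is not among the $L^{q'}$ norms of Proposition \ref{6h300520142}. The Gagliardo-Nirenberg interpolation, exploiting $\|\varphi\|_\infty\leq 1$ to convert $\|\nabla\varphi\|_{L^{2q'}}^{2q'}$ into $\|D^2\varphi\|_{L^{q'}}^{q'}$, is the essential device that recovers the capacity scaling; without it the whole estimate for $\Delta\xi$ would not close.
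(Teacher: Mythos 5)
Your derivation of \eqref{6h310520143} is essentially the paper's own Step 1: test with $\xi=(1-\varphi)^{2q'}$, use $(1-\varphi)^{2q'-1}\leq(1-\varphi)^{2q'-2}=\xi^{1/q}$, H\"older/Young against $u^q\xi$, and the Gagliardo--Nirenberg step $\|\nabla\varphi\|_{L^{2q'}}^{2q'}\lesssim\|\varphi\|_\infty^{q'}\|D^2\varphi\|_{L^{q'}}^{q'}$ to reduce the quadratic-gradient term; the only remark is that the absorption $\epsilon\int u^q\xi$ needs the a priori finiteness of $\int u^q\xi$, which holds since $u$ is bounded away from $K$ and $\xi$ vanishes near $K$ (the paper's H\"older-and-divide step needs the same observation), and your sign facts on $\partial_p\tilde Q_R$ are exactly those the paper invokes.

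For \eqref{6h310520144} and \eqref{6h310520146} you take a genuinely different and more elementary route. The paper writes $\partial_t(\eta\xi u)-\Delta(\eta\xi u)=F$ for suitable cutoffs $\eta$ (resp. $\theta$), represents $\eta\xi u$ through the Gaussian kernel, exploits the disjointness of the supports of $\nabla\eta$ and $\nabla\xi$ together with pointwise kernel bounds, and controls the far annulus by the Keller--Osserman-type estimate \eqref{6h310520145} of Remark \ref{6h010620147}; this is precisely where the additive $R^{-\frac{2}{q-1}}$ terms come from. You instead differentiate $t\mapsto\int_{B_R}u\xi\,dx$, use the equation, the sign of $\oint_{\partial B_R}\partial_\nu u$ and $u(\cdot,-R^2)=0$ to get the uniform slab bound $\sup_t\int_{B_R}u\xi(\cdot,t)\,dx\lesssim\operatorname{Cap}_{2,1,q'}(K)$, whence \eqref{6h310520146} follows by integrating in $t$, and \eqref{6h310520144} follows from the local $L^1$-to-$L^\infty$ maximum principle for nonnegative subcaloric functions on unit-scale cylinders $Q_{2r}(x_0,t_0)\subset\tilde Q_R\setminus\tilde Q_{3/2}$, where indeed $\xi\equiv1$ (your geometric check that such cylinders avoid $\tilde Q_{3/2}\supset K$ and stay in $\tilde Q_R$ is correct). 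What your approach buys: no heat-kernel manipulations, no use of \eqref{6h310520145}, and in fact bounds without the $R^{-\frac{2}{q-1}}$ terms, which is stronger than required; what it costs: reliance on the standard parabolic local maximum principle (e.g. Lieberman) and on the same boundary regularity of $u$ on $\partial_p\tilde Q_R$ that the paper already assumes. I see no genuine gap.
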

\begin{proof} {\it Step 1.}
We claim that 
\begin{align}\label{6h310520141}
\int_{\tilde{Q}_R(0,0)}u^q\xi dxdt\lesssim \text{Cap}_{2,1,q'}(K).
\end{align}
Actually, using integration by parts and the Green formula, one has 
\begin{align*}
&\int_{\tilde{Q}_R(0,0)}u^q\xi dxdt=-\int_{\tilde{Q}_R(0,0)} \partial_tu\xi dxdt+\int_{\tilde{Q}_R(0,0)} \xi\Delta u  dxdt\\
&\phantom{------} = \int_{\tilde{Q}_R(0,0)} u\partial_t\xi dxdt+ \int_{\tilde{Q}_R(0,0)} u\Delta\xi   dxdt +\int_{-R^2}^{R^2}\int_{\partial B_R(0)}\left(\xi\frac{\partial u}{\partial \nu}- u\frac{\partial\xi}{\partial \nu}\right)dSdt,
\end{align*}
where $\nu$ is the outer normal unit vector on $\partial B_R(0)$. Clearly,  
\begin{align*}
\frac{\partial u}{\partial \nu}\leq 0 ~~\text{ and }~~\frac{\partial\xi}{\partial \nu}=0~~\text{ on }~~\partial B_R(0).
\end{align*}
Thus, 
\begin{align}\nonumber
&\int_{\tilde{Q}_R(0,0)}u^q\xi dxdt\leq \int_{\tilde{Q}_R(0,0)} u|\partial_t\xi| dxdt+ \int_{\tilde{Q}_R(0,0)} u|\Delta\xi|   dxdt 
\\&\phantom{---}\nonumber\leq 2q'\int_{\tilde{Q}_R(0,0)} u(1-\varphi)^{2q'-1}|\partial_t\varphi| dxdt+2q'(2q'-1)\int_{\tilde{Q}_R(0,0)}u(1-\varphi)^{2q'-2}|\nabla\varphi|^2dxdt\\&\nonumber~~~~~~~~~~~+2q'\int_{\tilde{Q}_R(0,0)}u(1-\varphi)^{2q'-1}|\Delta \varphi|dxdt
\\&\phantom{---}\nonumber\leq 2q'\int_{\tilde{Q}_R(0,0)} u\xi^{1/q}|\partial_t\varphi| dxdt+2q'(2q'-1)\int_{\tilde{Q}_R(0,0)}u\xi^{1/q}|\nabla\varphi|^2dxdt\\&~~~~~~~~~~~+2q'\int_{\tilde{Q}_R(0,0)}u\xi^{1/q}|\Delta \varphi|dxdt.\label{6h310520142}
\end{align}
In the last inequality, we have used the fact that $(1-\phi)^{2q'-1}\leq (1-\phi)^{2q'-2}=\xi^{1/q}$. \\
Hence, by H\"older's inequality, 
\begin{align*}
&\int_{\tilde{Q}_R(0,0)}u^q\xi dxdt
\lesssim \int_{\tilde{Q}_R(0,0)}| \partial_t \varphi|^{q'} dxdt+\int_{\tilde{Q}_R(0,0)}|\nabla\varphi|^{2q'}dxdt\\&~~~~~~~~~~~~~~~~~~~~~~+\int_{\tilde{Q}_R(0,0)}|\Delta \varphi|^{q'}dxdt.
\end{align*}
By the Gagliardo-Nirenberg inequality, 
\begin{align*}
\int_{\tilde{Q}_R(0,0)}|\nabla\varphi|^{2q'}dxdt&\lesssim ||\varphi||_{L^\infty(\tilde{Q}_R(0,0))}^{q'} \int_{\tilde{Q}_R(0,0)}|D^2\varphi|^{q'}dxdt\\&
\lesssim \int_{\tilde{Q}_R(0,0)}|D^2\varphi|^{q'}dxdt.
\end{align*}
Hence, we find
\begin{align*}
&\int_{\tilde{Q}_R(0,0)}u^q\xi dxdt
\lesssim \int_{\tilde{Q}_R(0,0)}(|\partial_t\varphi|^{q'}+|D^2 \varphi|^{q'})dxdt,
\end{align*} 
and derive \eqref{6h310520141} from \eqref{6h300520141}. In view of \eqref{6h310520142}, we also obtain 
\begin{align*}
&\int_{\tilde{Q}_R(0,0)}u(|\Delta \xi|+|\partial_t\xi|) dxdt
\lesssim \text{Cap}_{2,1,q'}(K),
\end{align*}
and
\begin{align*}
&\int_{\tilde{Q}_R(0,0)}u|\nabla \xi|dxdt
\lesssim \text{Cap}_{2,1,q'}(K),
\end{align*}
since
\begin{align*}
\int_{\tilde{Q}_R(0,0)}u|\nabla\xi| dxdt&= 2q'\int_{\tilde{Q}_R(0,0)}u\xi^{(2q'-1)/2q'}|\nabla\varphi| dxdt\\&\leq 2q'\int_{\tilde{Q}_R(0,0)}u\xi^{1/q}|\nabla\varphi| dxdt
\\& \lesssim \int_{\tilde{Q}_R(0,0)}u^q\xi dxdt+\int_{\tilde{Q}_R(0,0)}|\nabla\varphi|^{q'} dxdt.
\end{align*}
It yields \eqref{6h310520143}. \\
 {\it Step 2.} Relation \eqref{6h310520144} holds. Let $\eta$ be a cut off function on $\tilde{Q}_{R/4}(0,0)$ with respect to $\tilde{Q}_{R/3}(0,0)$ such that $|\partial_t\eta|+|D^2\eta|\lesssim R^{-2}$ and $|\nabla \eta|\lesssim R^{-1}$. We have 
\begin{align*}
\partial_t(\eta\xi u)-\Delta (\eta\xi u)=F\in C_c(\tilde{Q}_{R/3}(0,0)).
\end{align*} 
Hence, we can write 
\begin{align*}
(\eta\xi u)(x,t)=\int_{\mathbb{R}^{N}}\int_{-\infty}^{t}\frac{1}{(4\pi (t-s))^{\frac{N}{2}}}e^ {-\frac{|x-y|^2}{4(t-s)}}F(y,s)dsdy~~
\forall (x,t)\in\mathbb{R}^{N+1}.
\end{align*}
Now, we fix $(x,t)\in \tilde{Q}_{R/5}(0,0)\backslash \tilde{Q}_2(0,0)$.  Since $\text{supp}\{|\nabla\eta|\}\cap \text{supp}\{|\nabla\xi|\}=\emptyset$ and 
\begin{align*}
F&=\eta\xi\left(\partial_tu-\Delta u\right)-2\left(\eta\nabla\xi+\xi\nabla\eta\right)\nabla u+\left(\xi\partial_t\eta+\eta\partial_t\xi-2\nabla \eta\nabla \xi-\Delta\eta \xi-\eta\Delta\xi\right)u\\[2mm]&
\leq -2\left(\eta\nabla\xi+\xi\nabla\eta\right)\nabla u+\left(\xi\partial_t\eta+\eta\partial_t\xi-\xi\Delta\eta -\eta\Delta\xi\right)u,
\end{align*}
there holds
\begin{align*}
u(x,t) =(\eta\xi u)(x,t)&\leq -2\int_{\mathbb{R}^{N}}\int_{-\infty}^{t}\frac{1}{(4\pi (t-s))^{\frac{N}{2}}}e^ {-\frac{|x-y|^2}{4(t-s)}}\left(\eta\nabla\xi+\xi\nabla\eta\right)\nabla udsdy\\&\phantom{--} +
\int_{\mathbb{R}^{N}}\int_{-\infty}^{t}\frac{1}{(4\pi (t-s))^{\frac{N}{2}}}e^ {-\frac{|x-y|^2}{4(t-s)}}\left(\eta\partial_t\xi-\eta\Delta\xi\right)udsdy
\\&\phantom{--}+\int_{\mathbb{R}^{N}}\int_{-\infty}^{t}\frac{1}{(4\pi (t-s))^{\frac{N}{2}}}e^ {-\frac{|x-y|^2}{4(t-s)}}\left(\partial_t\eta\xi-\xi\Delta\eta\right)udsdy.\\\;& =I_1+I_2+I_3.
\end{align*}
By integration  by parts,
\begin{align*}
I_1&=2(4\pi)^{-N/2}\int_{-\infty}^{t}\int_{\mathbb{R}^N}\frac{(x-y)}{2(t-s)^{(N+2)/2}}e^ {-\frac{|x-y|^2}{4(t-s)}}\left(\eta \nabla \xi+\xi \nabla \eta\right)u dyds\\& +
2(4\pi)^{-N/2}\int_{-\infty}^{t}\int_{\mathbb{R}^N}\frac{1}{(t-s)^{N/2}}e^ {-\frac{|x-y|^2}{4(t-s)}}\left(\xi \Delta \eta+\eta \Delta\xi\right)u\,dyds.
\end{align*}
Note that 
$$
\frac{1}{(t-s)^{N/2}}e^ {-\frac{|x-y|^2}{4(t-s)}}\lesssim \left(\max\{|x-y|,|t-s|^{1/2}\}\right)^{-N},
$$
$$
\left|\frac{(x-y)}{2(t-s)^{(N+2)/2}}e^ {-\frac{|x-y|^2}{4(t-s)}}\right|\lesssim \left(\max\{|x-y|,|t-s|^{1/2}\}\right)^{-N-1},
$$
and 
\begin{align*}
&\max\{|x-y|,|t-s|^{1/2}\}\gtrsim 1 ~~~~\forall (y,s)\in \text{supp}\{|D^\alpha \xi|\}\cup\text{supp}\{|\partial_t\xi|\},\\& 
\max\{|x-y|,|t-s|^{1/2}\}\gtrsim R~~~~\forall (y,s)\in \text{supp}\{|D^\alpha \eta|\}\cup\text{supp}\{|\partial_t\eta|\}~~\forall |\alpha|\geq 1.
\end{align*}
We deduce 
\begin{align*}
I_1&\lesssim \int_{\mathbb{R}^{N+1}}\left(\max\{|x-y|,|t-s|^{1/2}\}\right)^{-N-1}(\eta|\nabla \xi|+\xi|\nabla\eta|)u\,dyds\\&\phantom{--}
+\int_{\mathbb{R}^{N+1}}\left(\max\{|x-y|,|t-s|^{1/2}\}\right)^{-N}\left(\xi |\Delta \eta|+\eta| \Delta\xi|\right)u\,dyds\\&\lesssim 
\int_{\mathbb{R}^{N+1}}(|\nabla\xi|+|\Delta\xi|)u\,dyds+ \int_{\tilde{Q}_{R/3}(0,0)\backslash\tilde{Q}_{R/4}(0,0)}(R^{-N-1}|\nabla\eta|+R^{-N}|\Delta\eta|)u\,dyds
\\&\lesssim 
\int_{\mathbb{R}^{N+1}}(|\nabla\xi|+|\Delta\xi|)u\,dyds+ \sup_{\tilde{Q}_{R/3}(0,0)\backslash\tilde{Q}_{R/4}(0,0)}u,
\end{align*}
\begin{align*}
I_2&\lesssim \int_{\mathbb{R}^{N+1}}\left(\max\{|x-y|,|t-s|^{1/2}\}\right)^{-N}(|\partial_t\xi|+|\Delta\xi|)u\,dyds\\&
\lesssim \int_{\mathbb{R}^{N+1}}(|\partial_t\xi|+|\Delta\xi|)u\,dyds,
\end{align*}
and
\begin{align*}
I_3&\lesssim \int_{\mathbb{R}^{N+1}}\left(\max\{|x-y|,|t-s|^{1/2}\}\right)^{-N}(|\partial_t\eta|+|\Delta\eta|)u\,dyds\\&
\lesssim \int_{\tilde{Q}_{R/3}(0,0)\backslash\tilde{Q}_{R/4}(0,0)}R^{-N}(|\partial_t\eta|+|\Delta\eta|)u\,dyds
\\&\lesssim \sup_{\tilde{Q}_{R/3}(0,0)\backslash\tilde{Q}_{R/4}(0,0)}u.
\end{align*}
Hence, 
\begin{align*}
u(x,t)\leq I_1+I_2+I_3\lesssim \int_{\mathbb{R}^{N+1}}(|\partial_t\xi|+|\nabla\xi|+|\Delta\xi|)u\,dyds+\sup_{\tilde{Q}_{R/3}(0,0)\backslash\tilde{Q}_{R/4}(0,0)}u.
\end{align*}
Combining this inequality with \eqref{6h310520143} and \eqref{6h310520145}, we obtain \eqref{6h310520144}.\\
 \noindent {\it Step 3.} End of the proof. Let $\theta$ be a cut off function on $\tilde{Q}_{3}(0,0)$ with respect to $\tilde{Q}_{4}(0,0)$. As above, we have for any $(x,t)\in\mathbb{R}^{N+1}$
\begin{align*}
(\theta\xi u)(x,t)&\lesssim \int_{\mathbb{R}^{N+1}}(\max\{|x-y|,|t-s|^{1/2}\})^{-N-1}(\theta|\nabla \xi|+\xi|\nabla\theta|)u\,dyds
\\&\phantom{---} +\int_{\mathbb{R}^{N+1}}(\max\{|x-y|,|t-s|^{1/2}\})^{-N}(\theta|\Delta \xi|+\xi|\Delta\theta|)u\,dyds
\\&\phantom{---} +\int_{\mathbb{R}^{N+1}}(\max\{|x-y|,|t-s|^{1/2}\})^{-N}(\theta|\partial_t\xi|+\theta|\Delta\xi|)u\,dyds
\\&\phantom{---} +\int_{\mathbb{R}^{N+1}}(\max\{|x-y|,|t-s|^{1/2}\})^{-N}(\xi|\partial_t\theta|+\xi|\Delta\theta|)u\,dyds.
\end{align*}
Hence, by Fubini theorem, 
\begin{align*}
\int_{\tilde{Q}_2(0,0)}\eta u dxdt&=\int_{\tilde{Q}_2(0,0)}\theta\eta u dxdt\\&\lesssim A\int_{\mathbb{R}^{N+1}}\left(\theta|\nabla \xi|+\xi|\nabla\theta|+\theta|\Delta \xi|+\xi|\Delta\theta|+\theta|\partial_t\xi|+\xi|\partial_t\theta|\right)u\,dyds
\\&\lesssim \int_{\mathbb{R}^{N+1}}(|\partial_t\xi|+|\nabla\xi|+|\Delta\xi|)u\,dyds+\sup_{\tilde{Q}_{4}(0,0)\backslash\tilde{Q}_{3}(0,0)}u,
\end{align*}
where \begin{align*}
A=\sup_{(y,s)\in\tilde{Q}_4(0,0) }\int_{\tilde{Q}_2(0,0)}((\max\{|x-y|,|t-s|^{1/2}\})^{-N}+(\max\{|x-y|,|t-s|^{1/2}\})^{-N-1})dxdt.
\end{align*}
Therefore we obtain \eqref{6h310520146} from \eqref{6h310520143} and \eqref{6h310520144}. 
\end{proof}
\begin{proposition}\label{6h0106201414}Let $0<\varepsilon<1$, $K\subset \{(x,t):\varepsilon <\max\{|x|,|t|^{1/2}\}<1\}$ be a compact set  and $u$ the maximal solution of  \eqref{6h290520141} in $\tilde{Q}_R(0,0)\backslash K$ with $R\geq 100$. Then 
\begin{align}\label{6h0106201411}
\sup_{\tilde{Q}_{\varepsilon/4}(0,0)}u\lesssim \sum_{j=-2}^{j_{\varepsilon}-2}\frac{\operatorname{Cap}_{2,1,q'}(K\cap \tilde{Q}_{\rho_j}(0,0))}{\rho_j^N}+j_\varepsilon R^{-\frac{2}{q-1}}~~\text{if }~~q>q_*,
\end{align}
and 
\begin{align}\label{6h0106201412}
\sup_{\tilde{Q}_{\varepsilon/4}(0,0)}u\lesssim \sum_{j=0}^{j_{\varepsilon}}\frac{\operatorname{Cap}_{2,1,q'}(K_j)}{\rho_j^N}+j_\varepsilon R^{-\frac{2}{q-1}}~~\text{if }~~q=q_*,
\end{align}
where $\rho_j=2^{-j}$, $K_j=\{(x/\rho_{j+3},t/\rho_{j+3}^2): (x,t)\in K\cap \tilde{Q}_{\rho_{j-2}}(0,0)  \}$ and $j_\varepsilon\in\mathbb{N}$ is such that $\rho_{j_\varepsilon}\leq \varepsilon <\rho_{j_\varepsilon-1}$. 
\end{proposition}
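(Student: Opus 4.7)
The strategy rests on three ingredients: sub-additivity of maximal solutions with respect to the obstacle (Remark \ref{6h010620147}, inequality \eqref{6h010620148}), the parabolic scaling invariance of equation \eqref{6h200520143} coupled with the scaling laws for $\operatorname{Cap}_{2,1,q'}$ (Proposition \ref{6h280520141}), and a refinement of Proposition \ref{6h010620143} providing a pointwise bound on the \emph{interior} of $\tilde{Q}_2(0,0)$ when the obstacle is parabolically separated from that cube. First I partition $K$ into dyadic annular pieces $A_j=K\cap\bigl(\tilde{Q}_{\rho_{j-2}}(0,0)\setminus\tilde{Q}_{\rho_{j-1}}(0,0)\bigr)$; the hypothesis $K\subset\{\varepsilon<\max(|x|,|t|^{1/2})<1\}$ restricts the nontrivial indices to $-2\le j\le j_\varepsilon-2$. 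Letting $u^j$ denote the maximal solution of \eqref{6h290520141} in $\tilde{Q}_R(0,0)\setminus A_j$, Remark \ref{6h010620147} gives $u\le\sum_j u^j$.

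Next, for each $j$ I rescale by $\rho_{j+3}=\rho_j/8$: setting $\bar u^j(X,T)=\rho_{j+3}^{2/(q-1)}u^j(\rho_{j+3}X,\rho_{j+3}^2T)$, $\bar u^j$ is the maximal solution of \eqref{6h290520141} on $\tilde{Q}_{R/\rho_{j+3}}(0,0)\setminus\bar{A}_j$ with $\bar A_j=A_j/\rho_{j+3}\subset\tilde{Q}_{16}\setminus\tilde{Q}_8$, at parabolic distance $\ge 5$ from $\tilde{Q}_3(0,0)$. The target region transforms as $\tilde{Q}_{\varepsilon/4}(0,0)\mapsto \tilde{Q}_{2\varepsilon/\rho_j}(0,0)\subset\tilde{Q}_2(0,0)$ for $j\le j_\varepsilon-2$, so it lies \emph{inside} the safety cube of Proposition \ref{6h010620143}. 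The main step is to prove
\begin{equation*}
\bar u^j(Y,S)\lesssim \operatorname{Cap}_{2,1,q'}(\bar A_j)+(R/\rho_{j+3})^{-2/(q-1)}\qquad\forall\,(Y,S)\in\tilde{Q}_2(0,0).
\end{equation*}

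To obtain this I rerun the proof of Proposition \ref{6h010620143} with a modified cutoff. Using Proposition \ref{6h300520142} (extended to compact subsets of $\overline{\tilde{Q}_{16}}$ by obvious rescaling) I pick $\varphi_0$ with $\varphi_0=1$ on a neighborhood of $\bar A_j$ and $\|\varphi_0\|_{W^{2,1}_{q'}}^{q'}\lesssim \operatorname{Cap}_{2,1,q'}(\bar A_j)$; then I multiply by a fixed $\psi\in C_c^\infty(\mathbb{R}^{N+1})$ with $\psi\equiv 0$ on $\tilde{Q}_{5/2}(0,0)$ and $\psi\equiv 1$ outside $\tilde{Q}_3(0,0)$. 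Since $\bar A_j\subset\tilde{Q}_{16}\setminus\tilde{Q}_8\subset\{\psi=1\}$, the product $\varphi=\varphi_0\psi$ still equals $1$ on a neighborhood of $\bar A_j$, and $\|\varphi\|_{W^{2,1}_{q'}}^{q'}\lesssim\|\varphi_0\|_{W^{2,1}_{q'}}^{q'}$ with constants depending only on $\psi$. With $\xi=(1-\varphi)^{2q'}$, the integral estimates \eqref{6h310520141}--\eqref{6h310520143} of Step 1 of Proposition \ref{6h010620143}'s proof go through verbatim. Crucially, the new $\xi$ equals $1$ on $\tilde{Q}_2(0,0)$, so the Duhamel representation used in Step 2 of that proof applies at any $(Y,S)\in\tilde{Q}_2(0,0)$, with the support of $\nabla\xi$ remaining at parabolic distance $\ge 1/2$ from $(Y,S)$, which yields the displayed bound.

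Finally I descale and sum. Combining the scaling identity \eqref{6h010620149} with the algebraic identity $2q'-N-2-2/(q-1)=-N$ converts $\rho_{j+3}^{-2/(q-1)}\operatorname{Cap}_{2,1,q'}(\bar A_j)$ into $\operatorname{Cap}_{2,1,q'}(A_j)/\rho_j^N\le\operatorname{Cap}_{2,1,q'}(K\cap\tilde{Q}_{\rho_j}(0,0))/\rho_j^N$ when $q>q_*$; summing over the $O(j_\varepsilon)$ indices gives \eqref{6h0106201411}. For $q=q_*$, the exponent identity still yields $\rho_{j+3}^{-2/(q-1)}\asymp\rho_j^{-N}$, but the logarithmic law \eqref{6h0106201410} blocks the reduction $\operatorname{Cap}(\bar A_j)\to\operatorname{Cap}(A_j)$; instead one uses $\bar A_j\subset K_j$ to get $\operatorname{Cap}(\bar A_j)\le\operatorname{Cap}(K_j)$ directly, producing \eqref{6h0106201412}. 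The additive $j_\varepsilon R^{-2/(q-1)}$ term is the sum of the $O(j_\varepsilon)$ residuals. The hard step is the interior pointwise bound of Step 3: after rescaling by $\rho_j/8$ the evaluation region sits inside $\tilde{Q}_2(0,0)$ where Proposition \ref{6h010620143} gives no control, and recovering the bound requires a capacitary cutoff vanishing on $\tilde{Q}_2(0,0)$---possible only because the rescaling has placed $\bar A_j$ uniformly far from $\tilde{Q}_3(0,0)$.
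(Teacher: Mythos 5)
Your route is genuinely different from the paper's at the decisive step, and its core mechanism is viable. The paper never rescales about the origin: for each dyadic shell it covers $K\cap S_j$ by a bounded number $L(N)$ of cylinders $\overline{\tilde{Q}_{\rho_{j+3}}(x_{k,j},t_{k,j})}$ \emph{centered on the shell}, rescales about those centers, and observes that the evaluation point near the origin then lands at rescaled parabolic distance $>2$ from the new center, so the pointwise bound \eqref{6h310520144} of Proposition \ref{6h010620143} applies as a black box (after a comparison with the maximal solution of a larger centered cube and translation invariance of $\operatorname{Cap}_{2,1,q'}$). You instead rescale about the origin, accept that the evaluation region falls inside $\tilde{Q}_2(0,0)$, and rerun the proof of Proposition \ref{6h010620143} with a capacitary cutoff multiplied by a fixed profile forcing $\xi\equiv 1$ on a neighborhood of the origin-centered cube; this works precisely because the rescaled shell is uniformly far from the origin, and it buys you the elimination of the covering argument and of the translation/comparison step, at the price of reproving (rather than quoting) the Step‑1/Step‑2 estimates of Proposition \ref{6h010620143} for obstacles contained in a larger annulus. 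Your sketch of that reproof (integration by parts unaffected since $\xi$ still vanishes to order $2q'$ near the obstacle and is locally constant near $\partial B_{R'}$; Duhamel representation at interior points with the support of the derivatives of $\xi$ at parabolic distance bounded below) is sound, as is the exponent bookkeeping $N+2-2q'+\frac{2}{q-1}=N$ and the use of $\bar A_j\subset K_j$ when $q=q_*$.

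What must be repaired is the numerology, and one of the slips is not cosmetic. With $A_j=K\cap\bigl(\tilde{Q}_{\rho_{j-2}}\setminus\tilde{Q}_{\rho_{j-1}}\bigr)$ the nonempty indices are (roughly) $2\le j\le j_\varepsilon+1$, not $-2\le j\le j_\varepsilon-2$: as written your decomposition omits the innermost shells of $K$ (parabolic norm in $(\varepsilon,\rho_{j_\varepsilon-3})$), so $u\le\sum_j u^j$ fails. Moreover, after dividing by $\rho_{j+3}$ one has $\bar A_j\subset\overline{\tilde{Q}_{32}}\setminus\tilde{Q}_{16}$, not $\tilde{Q}_{16}\setminus\tilde{Q}_8$, and for the largest admissible indices $j\in\{j_\varepsilon,j_\varepsilon+1\}$ the rescaled evaluation cube $\tilde{Q}_{2\varepsilon/\rho_j}(0,0)$ has radius up to almost $8$, so a plateau of $\xi$ on $\tilde{Q}_{5/2}$ is too small; you should take $\psi\equiv 0$ on, say, $\tilde{Q}_{8}$ and $\psi\equiv 1$ outside $\tilde{Q}_{12}$ (possible since the rescaled shells have norm $\ge 16$), and take $\psi=1-\chi$ with $\chi\in C_c^\infty$ rather than $\psi\in C_c^\infty$. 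Finally, $A_j\subset K\cap\tilde{Q}_{\rho_{j-2}}$, not $K\cap\tilde{Q}_{\rho_j}$, so the descaled term is $\operatorname{Cap}_{2,1,q'}(K\cap\tilde{Q}_{\rho_{j-2}})/\rho_{j-2}^N$ up to a constant; re-indexing $i=j-2$ gives $0\le i\le j_\varepsilon-1$, and the single term $i=j_\varepsilon-1$ exceeding the stated range is absorbed into the $i=j_\varepsilon-2$ term by monotonicity of the capacity and the factor $2^N$. With these adjustments your argument closes and yields \eqref{6h0106201411} and \eqref{6h0106201412}.
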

\begin{proof}
For 
$j\in N$, we define $S_j=\{x:\rho_j\leq\max\{|x|,|t|^{1/2}\}\leq \rho_{j-1}\}.$\\
Fix any $1\leq j\leq j_\varepsilon$. We cover $S_j$ by $L=L(N)\in\mathbb{N}^*$ closed cylinders
$$\overline{\tilde{Q}_{\rho_{j+3}}(x_{k,j},t_{k,j})},~~k=1,...,L(N),$$
where $(x_{k,j},t_{k,j})\in S_j$. \\
For $k=1,...,L(N)$, let $u_j, u_{k,j}$ be the maximal solutions of  \eqref{6h290520141} where $K$ is replaced by  $K\cap S_j$ and $K\cap \overline{\tilde{Q}_{\rho_{j+3}}(x_{k,j},t_{k,j})}$, respectively. Clearly the function $\tilde{u}_{k,j}$ defined by 
 $$\tilde{u}_{k,j}(x,t)=\rho_{j+3}^{\frac{2}{q-1}}u_{k,j}(\rho_{j+3} x+x_{k,j},\rho_{j+3}^2t+t_{k,j})$$
 is the maximal solution of  \eqref{6h290520141} provided $(K_{k,j}, \tilde{Q}_{R/\rho_{j+3}}(-x_{k,j}/\rho_{j+3},-t_{k,j}/\rho_{j+3}^2))$ with
\begin{align*}
K_{k,j}=\{(y/\rho_{j+3},s/\rho_{j+3}^2): (y,s)\in -(x_{k,j},t_{k,j})+K\cap \overline{\tilde{Q}_{\rho_{j+3}}(x_{k,j},t_{k,j})} \}\subset \overline{\tilde{Q}_{1}(0,0)}
\end{align*}
 is replacing   $(K, \tilde{Q}_R(0,0))$. Let $\overline{u}_{k,j}$ be the maximal solution of  \eqref{6h290520141} with $(K, \tilde{Q}_R(0,0))$ replaced by  $(K_{k,j}, \tilde{Q}_{2R/\rho_{j+3}}(0,0))$. 
Since $ \tilde{Q}_{R/\rho_{j+3}}(-x_{k,j}/\rho_{j+3},-t_{k,j}/\rho_{j+3}^2)\subset  \tilde{Q}_{2R/\rho_{j+3}}(0,0)$, then, by the comparison principle as in the proof of Proposition \ref{6h010620146}, we get $\tilde{u}_{k,j}\leq \overline{u}_{k,j}$ in $\tilde{Q}_{R/\rho_{j+3}}(-x_{k,j}/\rho_{j+3},-t_{k,j}/\rho_{j+3}^2)\backslash K_{k,j}$, and thus 
 \begin{align*} \tilde{u}_{k,j}(x,t)\lesssim \text{Cap}_{2,1,q'}(K_{k,j}) +(R/\rho_{j+3})^{-\frac{2}{q-1}},
 \end{align*}
 for any $(x,t)\in  \left(\tilde{Q}_{2R/(5\rho_{j+3})}(0,0)\cap\tilde{Q}_{R/\rho_{j+3}}(-x_{k,j}/\rho_{j+3},-t_{k,j}/\rho_{j+3}^2)\right)\backslash \tilde{Q}_2(0,0)=D$. \\
Fix $(x_0,t_0)\in \tilde{Q}_{\varepsilon/4}(0,0)$. Clearly, $((x_0-x_{k,j})/\rho_{j+3},(t_0-t_{k,j})/\rho_{j+3})\in D$, hence 
$$\begin{array} {lll}
u_{k,j}(x_0,t_0)=\rho_{j+3}^{-\frac{2}{q-1}} \tilde{u}_{k,j}((x_0-x_{k,j})/\rho_{j+3},(t_0-t_{k,j})/\rho_{j+3}^2)\\
[2mm]
\phantom{u_{k,j}(x_0,t_0)}
\lesssim \displaystyle\frac{\text{Cap}_{2,1,q'}(K_{k,j})}{\rho_{j}^{\frac{2}{q-1}}} +R^{-\frac{2}{q-1}}. 
\end{array}$$
Therefore, using \eqref{6h010620148} in Remark \ref{6h010620147} and the fact that 
$$
\text{Cap}_{2,1,q'}(K_{k,j})=\text{Cap}_{2,1,q'}(K_{k,j}+(x_{k,j}/\rho_{j+3},t_{k,j}/\rho_{j+3}^2))\leq \text{Cap}_{2,1,q'}(K_{j}),
$$ 
we derive  
\begin{align*}
u(x_0,t_0) \leq \sum_{j=1}^{j_\varepsilon}u_j(x_0,t_0)&\leq \sum_{j=1}^{j_\varepsilon}\sum_{k=1}^{L(N)}u_{k,j}(x_0,t_0)\\&\lesssim \sum_{j=0}^{j_\varepsilon} \frac{\text{Cap}_{2,1,q'}(K_{j})}{\rho_{j}^{\frac{2}{q-1}}} +j_\varepsilon R^{-\frac{2}{q-1}},
\end{align*}
which yields \eqref{6h0106201412}. 
If $q>q_*$, then by \eqref{6h010620149} in Proposition \ref{6h280520141}, we have 
\begin{align*}
\text{Cap}_{2,1,q'}(K_{j})\lesssim \rho_{j+3}^{-N-2+2q'} \text{Cap}_{2,1,q'}(K\cap \tilde{Q}_{\rho_{j-2}}(0,0)),
\end{align*} 
which implies \eqref{6h0106201411}. 
\end{proof}

\begin{proposition}\label{6h010620144}
Let $K,u,\xi$ be as in Proposition \ref{6h010620143}. For any compact set $K_0$ in $\overline{\tilde{Q}_1(0,0)}$ with positive measure $|K_0|$, there exists $\varepsilon=\varepsilon(N,q,|K_0|)>0$ such that 
\begin{align*}
\operatorname{Cap}_{2,1,q'}(K)\leq \varepsilon \Rightarrow \inf_{K_0}u \lesssim \int_{\tilde{Q}_2(0,0)}u\xi dx dt,
\end{align*}
where the constant in the inequality $\lesssim$ depends on $K_0$. In particular, 
\begin{align}\label{6h010620145}
\operatorname{Cap}_{2,1,q'}(K)\leq \varepsilon \Rightarrow
\inf_{K_0}u \lesssim \operatorname{Cap}_{2,1,q'}(K) +R^{-\frac{2}{q-1}}.
\end{align}
\end{proposition}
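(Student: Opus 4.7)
The strategy is a soft measure-theoretic argument: I want to show that, when $\operatorname{Cap}_{2,1,q'}(K)$ is sufficiently small, the cut-off $\xi$ is bounded away from zero on the majority of $K_0$, so that a trivial Chebyshev-type lower bound on $\int u\xi\,dxdt$ forces $\inf_{K_0}u$ to be small. No parabolic Harnack inequality or new regularity result is needed; the nonlinear work has been packaged into the earlier Proposition \ref{6h010620143}.

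The first step is to invoke Proposition \ref{6h300520142} with $p=q'$ applied to the same capacitary function $\varphi$ that enters the definition of $\xi=(1-\varphi)^{2q'}$. This yields
$$\int_{\mathbb{R}^{N+1}}|\varphi|^{q'}\,dxdt \;\lesssim\; \operatorname{Cap}_{2,1,q'}(K),$$
and Chebyshev's inequality then gives $|\{\varphi>1/2\}|\lesssim \operatorname{Cap}_{2,1,q'}(K)$. Choosing $\varepsilon=\varepsilon(N,q,|K_0|)>0$ so small that this upper bound is at most $|K_0|/2$ whenever $\operatorname{Cap}_{2,1,q'}(K)\leq\varepsilon$, the set $E:=K_0\cap\{\varphi\leq 1/2\}$ satisfies $|E|\geq |K_0|/2$ and, pointwise on $E$, $\xi=(1-\varphi)^{2q'}\geq 2^{-2q'}$. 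Since $K_0\subset\overline{\tilde Q_1(0,0)}\subset\tilde Q_2(0,0)$ and $u\geq 0$, I can bound
$$\int_{\tilde Q_2(0,0)}u\xi\,dxdt \;\geq\; \int_E u\xi\,dxdt \;\geq\; 2^{-2q'}|E|\inf_{K_0}u \;\geq\; 2^{-2q'-1}|K_0|\inf_{K_0}u,$$
which is the first conclusion. The particular statement \eqref{6h010620145} is then immediate on combining this with inequality \eqref{6h310520146} of Proposition \ref{6h010620143}, namely $\int_{\tilde Q_2(0,0)}u\xi\,dxdt \lesssim \operatorname{Cap}_{2,1,q'}(K)+R^{-\frac{2}{q-1}}$.

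I do not foresee any real obstacle in implementing this plan: the only quantitative point to verify is that the implicit constants in Proposition \ref{6h300520142} depend only on $N$ and $q'$, so that $\varepsilon$ may indeed be chosen as a function of $N$, $q$, and $|K_0|$ alone. The sharpness of the measure threshold $|K_0|/2$ is of no importance, any fixed positive fraction would do, and the argument uses the nonlinear structure of the equation only through the previously established estimates on $\varphi$ and on $\int u\xi\,dxdt$.
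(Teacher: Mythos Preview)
Your proof is correct and follows the same overall strategy as the paper: show that $\xi$ is bounded below by a fixed positive constant on a subset of $K_0$ of measure at least $|K_0|/2$, then use $u\geq 0$ to extract $\inf_{K_0}u$ from $\int_{\tilde Q_2}u\xi$. The difference lies in how the measure of the bad set is controlled. The paper observes that $K_0\setminus K_1=\{(x,t)\in K_0:\varphi>1-(1/2)^{1/(2q')}\}$, bounds its capacity by $\operatorname{Cap}_{2,1,q'}(K)$ via the definition of capacity, and then invokes the capacity--measure comparison \eqref{6h010620141} of Proposition~\ref{6h280520141}, splitting into the cases $q>q_*$ and $q=q_*$. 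You instead use directly the bound $\int|\varphi|^{q'}\lesssim\operatorname{Cap}_{2,1,q'}(K)$ from Proposition~\ref{6h300520142} together with Chebyshev's inequality. Your route is more elementary, avoids the case distinction, and does not require Proposition~\ref{6h280520141} at this point; the paper's route, on the other hand, gives a sharper quantitative dependence of $|K_0\setminus K_1|$ on $\operatorname{Cap}_{2,1,q'}(K)$, though that sharpness is not needed here.
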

\begin{proof}
It is enough to prove that there exists  $\varepsilon>0$ such that 
\begin{align}\label{6h010620142}
\text{Cap}_{2,1,q'}(K)\leq \varepsilon \Rightarrow |K_1|\geq 1/2 |K_0|,
\end{align}
where $K_1=\{(x,t)\in K_0:\xi(x,t)\geq 1/2\}.$
By \eqref{6h010620141} in Proposition \ref{6h280520141}, we have the following estimates
$$
 |K_0\backslash K_1|^{1-\frac{2q'}{N+2}}\lesssim \text{Cap}_{2,1,q'}(K_0\backslash K_1), $$
if $q>q_*$, and
 $$\left(\log\left(\frac{|\tilde{Q}_{200}(0,0)|}{|K_0\backslash K_1|}\right)\right)^{-\frac{N}{2}}\lesssim \text{Cap}_{2,1,q'}(K_0\backslash K_1),
$$
if $q=q_*$. On the other hand, 
\begin{align*}
\text{Cap}_{2,1,q'}(K_0\backslash K_1)&=\text{Cap}_{2,1,q'}(\{ K_0:\varphi >1-(1/2)^{1/(2q')} \})\\& \leq (1-(1/2)^{1/(2q')})^{-q'} \int_{\mathbb{R}^{N+1}}\left(|D^2 \varphi |^{q'}+|\nabla \varphi|^{q'}+|\varphi|^{q'}+|\partial_t\varphi |^{q'}\right) dxdt\\&\lesssim \text{Cap}_{2,1,q'}(K),
\end{align*}
where $\varphi$ is in Proposition \ref{6h010620143}. 
Henceforth, one can find $\varepsilon=\varepsilon(N,q,|K_0|)>0$ such that 
\begin{align*}
\text{Cap}_{2,1,q'}(K)\leq \varepsilon \Rightarrow |K_0\backslash K_1|\leq 1/2~ |K_0|.
\end{align*}
This implies \eqref{6h010620142}.
\end{proof}\\
\section{Large solutions}
In the first part of this section,  we prove  theorem \ref{6h230520143}-(ii), then we  prove  theorems \ref{6h230520143}-(i) and \ref{6h230520144}. At end weapply our result to a parabolic viscous Hamilton-Jacobi equation. 
\subsection{Proof of Theorem \ref{6h230520143}-(ii)}
Let $R_0\geq 4$ such that $O\subset\subset \tilde{Q}_{R_0}(0,0)$. Assume that the equation \eqref{6h220520149} has a large solution $u$. We claim that \eqref{6h0106201413} holds with $(x,t)\in \partial_pO$, and without loss of generality, we can assume $(x,t)=(0,0)$. Set $K=\tilde{Q}_{2R_0}(0,0)\backslash O$ and define 
 \begin{align*}
& T_j=\{x:\rho_{j+1}\leq\max\{|x|,|t|^{1/2}\}\leq \rho_{j}, t\leq 0\},\\&
 \tilde{T}_j=\{x:\rho_{j+3}\leq\max\{|x|,|t|^{1/2}\}\leq \rho_{j-2},t\le 0\}. 
 \end{align*}
Here $\rho_j=2^{-j}$.  For $j\geq 3$, let $u_1,u_2,u_3,u_4$ be the maximal solutions of  \eqref{6h290520141} when $K$ is replaced by  $K\cap \overline{Q_{\rho_{j+3}}(0,0)}$,   $K\cap\tilde{T}_j$, $\left(K\cap \overline{Q_{1}(0,0)}\right)\backslash  Q_{\rho_{j-2}}(0,0)$ and $K\backslash Q_{1}(0,0)$ respectively and $R\geq 100 R_0$. From \eqref{6h010620148} in Remark \ref{6h010620147}, we can assert that 
 \begin{align*}
 u\leq u_1+u_2+u_3+u_4 ~~\text{ in }~ O\cap \{(x,t)\in\mathbb{R}^{N+1}:t\leq 0\}.
 \end{align*}
Thus, 
 \begin{align}\label{6h020620149}
  \inf_{T_j}u\leq|| u_1||_{L^\infty(T_j)}+|| u_3||_{L^\infty(T_j)}+|| u_4||_{L^\infty(T_j)}+\inf_{T_j}u_2.
  \end{align}
  \textbf{Case 1}: $q>q_*$. By \eqref{6h310520145} in Remark \ref{6h010620147}, 
  \begin{align}\label{6h020620141}
  || u_4||_{L^\infty(T_j)}\lesssim 1.
  \end{align}
  By \eqref{6h0106201411} in Proposition \ref{6h0106201414}, 
  \begin{align}\label{6h020620142}
  ||u_3||_{L^\infty(T_j)}\lesssim \sum_{i=-2}^{j-4}\frac{\text{Cap}_{2,1,q'}(K\cap Q_{\rho_{i}}(0,0))}{\rho_i^N}+j R^{-\frac{2}{q-1}}.
  \end{align}
 Since  $(x,t)\mapsto \overline{u}_1(x,t)=\rho_{j+3}^{2/(q-1)}u_1(\rho_{j+3}x,\rho_{j+3}^2t)$ is the maximal solution of  \eqref{6h290520141} when $(K, \tilde{Q}_R(0,0))$ is replaced by  $(\{(y/\rho_{j+3},s/\rho_{j+3}^2):(y,s)\in K\cap \overline{Q_{\rho_{j+3}}(0,0)} \}, \tilde{Q}_{R/\rho_{j+3}}(0,0))$, we derive 
 \begin{align*}
 ||\overline{u}_1||_{L^\infty(T_{-3})}\lesssim \frac{\text{Cap}_{2,1,q'}(K\cap Q_{\rho_{j+2}}(0,0))}{\rho_j^{N+2-2q'}}+ (R/\rho_{j+3})^{-\frac{2}{q-1}},
 \end{align*}
thanks to  \eqref{6h310520144} in Proposition \ref{6h010620143}  and \eqref{6h010620149} in Proposition \ref{6h280520141}, from which follows 
\begin{align}\label{6h020620143}
 ||u_1||_{L^\infty(T_{j})}\lesssim \frac{\text{Cap}_{2,1,q'}(K\cap Q_{\rho_{j+2}}(0,0))}{\rho_j^{N}}+ R^{-\frac{2}{q-1}}.
 \end{align} 
   Since  $(x,t)\mapsto \overline{u}_2(x,t)=\rho_{j-2}^{2/(q-1)}u_2(\rho_{j-2}x,\rho_{j-2}^2t)$ is the maximal solution of  \eqref{6h290520141} when the couple $(K, \tilde{Q}_R(0,0))$ is replaced by  $(\{(y/\rho_{j-2},s/\rho_{j-2}^2):(y,s)\in K\cap\tilde{T}_j \}, \tilde{Q}_{R/\rho_{j-2}}(0,0))$, 
  Proposition \ref{6h010620144} and relation \eqref{6h010620149} in Proposition \ref{6h280520141} yield
  \begin{align*}
  \frac{\text{Cap}_{2,1,q'}(K\cap\tilde{T}_j )}{\rho_{j-2}^{N+2-2q'}}\leq \varepsilon \Rightarrow
  \inf_{T_2}\overline{u}_2 \lesssim \frac{\text{Cap}_{2,1,q'}(K\cap\tilde{T}_j )}{\rho_{j-2}^{N+2-2q'}} +(R/\rho_{j-2})^{-\frac{2}{q-1}},
  \end{align*}
  which implies 
  \begin{align}\label{6h020620144}
    \frac{\text{Cap}_{2,1,q'}(K\cap Q_{\rho_{j-3}}(0,0)  )}{\rho_{j-2}^{N+2-2q'}}\leq \varepsilon \Rightarrow
    \inf_{T_j}u_2 \lesssim \frac{\text{Cap}_{2,1,q'}(K\cap Q_{\rho_{j-3}}(0,0) )}{\rho_{j-2}^{N}}+ R^{-\frac{2}{q-1}},
    \end{align}
    for some $\varepsilon=\varepsilon(N,q)>0$.\\
    First, we assume that there exists $J\in\mathbb{N}$, $J\geq 10$ such that 
    \begin{align*}
    \frac{\text{Cap}_{2,1,q'}(K\cap Q_{\rho_{j-3}}(0,0)  )}{\rho_{j-2}^{N+2-2q'}}\leq \varepsilon \quad\forall ~j\geq J.
    \end{align*}
    Then, from \eqref{6h020620149} and \eqref{6h020620141}, \eqref{6h020620142}, \eqref{6h020620143}, \eqref{6h020620144}, we have 
    \begin{align*}
     \inf_{T_j}u \lesssim \sum_{i=-2}^{j+2}\frac{\text{Cap}_{2,1,q'}(K\cap Q_{\rho_i}(0,0))}{\rho_i^N}+j R^{-\frac{2}{q-1}}+1,
      \end{align*}
 for any $j\geq J$, and letting $R\to \infty$, 
      \begin{align*}
           \inf_{T_j}u \lesssim \sum_{i=-2}^{j+2}\frac{\text{Cap}_{2,1,q'}(K\cap Q_{\rho_i}(0,0))}{\rho_i^N}+1.
            \end{align*}
Since $\inf_{T_j}u\to \infty $ as $j\to\infty$, we get
 \begin{align*}
 \sum_{i=0}^{\infty}\frac{\text{Cap}_{2,1,q'}(K\cap Q_{\rho_i}(0,0))}{\rho_i^N}=\infty,
 \end{align*}
 which implies that \eqref{6h0106201413} holds with $(x,t)=(0,0)$.\\
 Alternatively, assume that for infinitely many $j$ 
 \begin{align*}
     \frac{\text{Cap}_{2,1,q'}(K\cap Q_{\rho_{j-3}}(0,0)  )}{\rho_{j-2}^{N+2-2q'}}> \varepsilon,
     \end{align*}
 then, 
  \begin{align*}
      \frac{\text{Cap}_{2,1,q'}(K\cap Q_{\rho_{j-3}}(0,0)  )}{\rho_{j-2}^{N}}> \rho_{j-2}^{2-2q'}\varepsilon \to \infty ~~\text{ when }~ j\to \infty.
      \end{align*}
We also derive that \eqref{6h0106201413} holds with $(x,t)=(0,0)$. This proves the case $q>q_*$. \smallskip

\noindent \textbf{Case 2:} $q=q_*$.  Similarly to Case 1, we have: for $j\geq 6$
 \begin{align}\label{6h020620145}
  & || u_4||_{L^\infty(T_j)}\lesssim 1,\\&\label{6h020620146}
  ||u_3||_{L^\infty(T_j)}\lesssim \sum_{i=0}^{j-2}\frac{\text{Cap}_{2,1,q'}(K_j)}{\rho_i^N}+j R^{-\frac{2}{q-1}}, \\& \label{6h020620147}
  ||u_1||_{L^\infty(T_{j})}\lesssim \frac{\text{Cap}_{2,1,q'}(K_{j})}{\rho_j^{N}}+ R^{-\frac{2}{q-1}},  \\& \label{6h020620148}
 \text{Cap}_{2,1,q'}(K_{j-5} )\leq \varepsilon \Rightarrow
      \inf_{T_j}u_2 \lesssim \frac{\text{Cap}_{2,1,q'}(K_{j-5} )}{\rho_{j}^{N}}+ R^{-\frac{2}{q-1}},
  \end{align}
  where $K_j=\{(x/\rho_{j+3},t/\rho_{j+3}^2): (x,t)\in K\cap Q_{\rho_{j-3}}(0,0)  \}$  and $\varepsilon=\varepsilon(N)>0$.\\
From \eqref{6h010620149} in Proposition \ref{6h280520141}, we have 
  \begin{align*}
  \frac{1}{\text{Cap}_{2,1,q'}(K\cap Q_{\rho_{j-3}}(0,0))}\leq  \frac{c}{\text{Cap}_{2,1,q'}(K_j)}+cj^{N/2}
  \end{align*}
  for any $j\geq 4$ where $c=c(N)$.
If there are infinitely many $j\geq 4$ such that 
\begin{align*}
\text{Cap}_{2,1,q'}(K\cap Q_{\rho_{j-3}}(0,0))> \frac{1}{2cj^{N/2}},
\end{align*}
then \eqref{6h0106201413} holds with $(x,t)=(0,0)$ since
\begin{align*}
\frac{\text{Cap}_{2,1,q'}(K\cap Q_{\rho_{j-3}}(0,0))}{\rho_{j-3}^N} > \frac{2^{j-3}}{2cj^{N/2}}
\to \infty ~~\text{ when } j\to\infty. 
\end{align*}
Now, we assume that there exists $J\geq 6$ such that 
\begin{align*}
\text{Cap}_{2,1,q'}(K\cap Q_{\rho_{j-3}}(0,0))\leq \frac{1}{2cj^{N/2}}~~\forall ~j\geq J.
\end{align*}
Then, 
\begin{align*}
\text{Cap}_{2,1,q'}(K_j)\leq 2c \text{Cap}_{2,1,q'}(K\cap Q_{\rho_{j-3}}(0,0))~~\forall ~j\geq J.
\end{align*}
This leads to
\begin{align*}
\text{Cap}_{2,1,q'}(K_j)\leq 2c \text{Cap}_{2,1,q'}(K\cap Q_{\rho_{j-3}}(0,0))\leq \varepsilon~~~~\forall ~j\geq J'+J,
\end{align*}
 for some $J'=J'(N)$. 
Hence, from \eqref{6h020620145}-\eqref{6h020620148} we have, for any $j\geq J'+J+3$,
  \begin{align*}
    & || u_4||_{L^\infty(T_j)}\lesssim 1,\\&
    ||u_3||_{L^\infty(T_j)}\lesssim \sum_{i=J'+J+1}^{j-2}\frac{\text{Cap}_{2,1,q'}(K\cap Q_{\rho_{i-3}}(0,0))}{\rho_i^N}+C(J'+J)+j R^{-\frac{2}{q-1}}, \\& 
    ||u_1||_{L^\infty(T_{j})}\lesssim \frac{\text{Cap}_{2,1,q'}(K\cap Q_{\rho_{j-3}}(0,0))}{\rho_j^{N}}+ R^{-\frac{2}{q-1}},  \\& ~~
        \phantom{--}\inf_{T_j}u_2 \lesssim \frac{\text{Cap}_{2,1,q'}(K\cap Q_{\rho_{j-8}}(0,0))}{\rho_{j}^{N}}+ R^{-\frac{2}{q-1}},
    \end{align*}
  where $C(J'+J)=\sum_{i=0}^{J'+J}\frac{\text{Cap}_{2,1,q'}(K_j)}{\rho_i^N}$.\\ Consequently we derive 
  \begin{align*}
  \inf_{T_j}u\lesssim \sum_{i=0}^{j}\frac{\text{Cap}_{2,1,q'}(K\cap Q_{\rho_{i}}(0,0))}{\rho_i^N}+C(J'+J)+1+j R^{-\frac{2}{q-1}}~~\forall ~j\geq J'+J+3
  \end{align*}
from \eqref{6h020620149}.  Letting $R\to\infty$ and $j\to \infty$ we obtain 
   \begin{align*}
     \sum_{i=0}^{\infty}\frac{\text{Cap}_{2,1,q'}(K\cap Q_{\rho_{i}}(0,0))}{\rho_i^N}=\infty,
     \end{align*} 
     i.e. \eqref{6h0106201413} holds with $(x,t)=(0,0)$.  This completes the proof of Theorem \ref{6h230520143}-(ii).
\subsection{Proof of Theorem \ref{6h230520143}-(i) and Theorem \ref{6h230520144}}
Fix $(x_0,t_0)\in \partial_p O$. We can assume that $(x_0,t_0)=0$. Let $\delta\in (0,1/100)$. For $(y_0,s_0)\in (B_\delta(0)\times(-\delta^2,\delta^2))\cap O$, we set
$$
M_k=O^c\cap \left(\overline{B_{r_{k+2}}(y_0)}\times [s_0-(73+\frac{1}{2})r_{k+2}^2,s_0-(70+\frac{1}{2})r_{k+2}^2]\right),$$
and
$$
S_k=\{(x,t):r_{k+1}\leq \max\{|x-y_0|,|t-s_0|^{\frac{1}{2}}\}<r_k\}\text{ for }~k=1,2,...,
$$
where $r_k=4^{-k}$. Note that $M_k=\emptyset$ for $k$ large enough and $M_k\subset S_k$ for all $k$. Let $R_0\geq 4$ such that $O\subset\subset \tilde{Q}_{R_0}(0,0)$.  By Theorems \ref{6h2305201411} and \ref{6h2305201412} and estimate \eqref{6h230520142} there exist two sequences $\{\mu_k\}_k$ and $\{\nu_k\}_k$ of nonnegative Radon measures such that 
\begin{equation}\label{6h210520145}
~~~~~~~~~~\text{supp} (\mu_k)\subset M_k, ~\text{supp} (\nu_k)\subset M_k,
\end{equation}
\begin{equation}\label{6h210520146}
\mu_k(M_k)\asymp \text{Cap}_{2,1,q'}(M_k)\asymp \int_{\mathbb{R}^{N+1}}\left(\mathbb{I}_2^{2R_0}[\mu_k]\right)^qdxdt
\end{equation}
and
\begin{equation}\label{6h210520147}
\nu_k(M_k)\asymp \mathcal{PH}_1^N(M_k), ~~||\mathbb{M}_1^{2R_0}[\nu_k]||_{L^\infty(\mathbb{R}^{N+1})}\leq 1~~\text{ for } k=1,2,...,
\end{equation}
where the constants of equivalence depend on $N,q, R_0$. \smallskip

\noindent Take $\varepsilon>0$ such that $ \exp\left(C_1 \varepsilon\mathbb{I}_2^{2R_0}[\sum_{k=1}^{\infty}\nu_k]\right)\in L^1(\tilde{Q}_{R_0}(0,0))$, in which expression the constant $C_1=C_1(N)$ is the one of inequality  \eqref{6h230520145}. By Theorem \ref{6h230520148} and Proposition \ref{6h230520149}, there exist two nonnegative solutions $U_1,U_2$ of problems 
$$
 \begin{array}{lll}
     \partial_tU_1-\Delta U_1 +U_1^q=\displaystyle\varepsilon\sum_{k=1}^{\infty}\mu_k \qquad&\text{ in } \tilde{Q}_{R_0}(0,0),\\ 
     \phantom{ \partial_tU_1-\Delta  +U_1^q}
      U_1=0~&\text{ on }~\partial_p \tilde{Q}_{R_0}(0,0),   
      \end{array} 
$$
and 
$$
\begin{array}{lll}
         \partial_tU_2-\Delta U_2 +e^{U_2}-1=\displaystyle \varepsilon\sum_{k=1}^{\infty}\nu_k\qquad&\text{ in } \tilde{Q}_{R_0}(0,0),\\ 
              \phantom{\partial_tU_2-\Delta  +e^{U_2}-1}
          U_2=0&\text{ on }~\partial_p \tilde{Q}_{R_0}(0,0),      
          \end{array} 
$$
        respectively which satisfy 
        \begin{align}  \nonumber      
        & U_1(y_0,z_0)\gtrsim   \sum_{i=0}^{\infty}\sum_{k=1}^{\infty}\varepsilon\frac{\mu_k(B_{\frac{r_i}{8}}(y_0)\times(s_0-\frac{37}{128}r_i^2,s_0-\frac{35}{128}r_i^2))}{r_i^N}\\&
        ~~~~~~~~~~~~~~~~- \mathbb{I}_2^{2R_0}\left[\left(\mathbb{I}_2^{2R_0}[\varepsilon
        \sum_{k=1}^{\infty}\mu_k]\right)^q\right](y_0,s_0)=:A,\label{6h210520141}
        \end{align}
        and 
\begin{align}  \nonumber      
        & U_2(y_0,z_0)\gtrsim  \sum_{i=0}^{\infty}\sum_{k=1}^{\infty}\varepsilon\frac{\nu_k(B_{\frac{r_i}{8}}(y_0)\times(s_0-\frac{37}{128}r_i^2,s_0-\frac{35}{128}r_i^2))}{r_i^N}\\&
        ~~~~~~~~~~~~~~~~- \mathbb{I}_2^{2R_0}\left[\exp\left(C_1\mathbb{I}_2^{2R_0}[\varepsilon
        \sum_{k=1}^{\infty}\nu_k]\right)-1\right](y_0,s_0)=:B,\label{6h210520142}
        \end{align} 
         and $U_1,U_2\in C^{2,1}(O)$.\\
    Let $u_1,u_2$ be the maximal solutions of equations \eqref{6h200520143} and \eqref{6h200520145} respectively.\\ We have $u_1(y_0,s_0)\geq U_1(y_0,s_0)$ and  $u_2(y_0,s_0)\geq U_2(y_0,s_0)$. 
    Now, we claim that 
    \begin{equation}
\label{6h210520143} A \gtrsim \sum_{k=1}^{\infty}\frac{\text{Cap}_{2,1,q'}(M_k)}{r_k^N},
\end{equation}
and
       \begin{equation}\label{6h210520144}
   B\gtrsim  -c_1(R_0)+\sum_{k=1}^{\infty}\frac{\mathcal{PH}^N_1(M_k)}{r_k^N}.
   \end{equation}
\noindent
    \textbf{Proof of assertion \eqref{6h210520143}.}  From \eqref{6h210520146} we have 
    \begin{align}\label{6h220520143}
    A\gtrsim  \varepsilon \sum_{k=1}^{\infty}\frac{\text{Cap}_{2,1q'}(M_k)}{r_k^N}-\varepsilon^q A_0,
    \end{align} 
    with \begin{align*}
    A_0=\mathbb{I}_2^{2R_0}\left[\left(\mathbb{I}_2^{2R_0}[
                \sum_{k=1}^{\infty}\mu_k]\right)^q\right](y_0,s_0).
    \end{align*} 
 Take $i_0\in\mathbb{Z}$ such that $r_{i_0+1}<\max\{2R_0,1\}\leq r_{i_0}$. Then 
 \begin{align*}
 A_0&\lesssim\sum_{i=i_0}^{\infty}r_i^{-N}\int_{\tilde{Q}_{r_i}(y_0,s_0)}\left(\mathbb{I}_2^{2R_0}[
                 \sum_{k=1}^{\infty}\mu_k]\right)^qdxdt\\[2mm]&
    = \sum_{i=i_0}^{\infty}\sum_{j=i}^{\infty}r_i^{-N}\int_{S_j}\left(\mathbb{I}_2^{2R_0}[
                     \sum_{k=1}^{\infty}\mu_k]\right)^qdxdt\\[2mm]& 
= \sum_{j=k_0}^{\infty}\sum_{i=i_0}^{j}r_i^{-N}\int_{S_j}\left(\mathbb{I}_2^{2R_0}[
                     \sum_{k=1}^{\infty}\mu_k]\right)^qdxdt 
\\[2mm]
&\lesssim   \sum_{j=i_0}^{\infty}r_j^{-N}\int_{S_j}\left(\mathbb{I}_2^{2R_0}[
                     \sum_{k=1}^{\infty}\mu_k]\right)^qdxdt.                                     
 \end{align*}
 Here we have used the fact that $\sum_{i=i_0}^{j}r_i^{-N}\leq 
 \frac{4}{3}r_j^{-N}$ for all $j$.\\
If we set $\mu_k\equiv0$ for all $i_0-1\leq k\leq 0$,
the previous inequality becomes
\begin{align}\nonumber
 A_0&\lesssim   \sum_{j=i_0}^{\infty}r_j^{-N}\int_{S_j}\left(\mathbb{I}_2^{2R_0}[
                     \mu_j+\sum_{k=i_0-1}^{j-1}\mu_k+ \sum_{k=j+1}^{\infty}\mu_k]\right)^qdxdt \\&\nonumber \lesssim \sum_{j=i_0}^{\infty} r_j^{-N} \int_{S_j}\left(\mathbb{I}_2^{2R_0}[
                     \mu_j]\right)^qdxdt\\&\nonumber +\sum_{j=i_0}^{\infty}r_j^{2} \left(\sum_{k=i_0-1}^{j-1}||\mathbb{I}_2^{2R_0}[
                                          \mu_k]||_{L^\infty(S_j)}\right)^q
\\&\nonumber+\sum_{j=i_0}^{\infty}r_j^{2} \left(\sum_{k=j+1}^{\infty}||\mathbb{I}_2^{2R_0}[
                                          \mu_k]||_{L^\infty(S_j)}\right)^q  
       \\&=A_1+A_2+A_3.   \label{6h220520142}                                     
\end{align} 
Using \eqref{6h210520146} we obtain 
\begin{align}\label{6h220520141}
A_1\leq \sum_{k=1}^{\infty}\frac{\text{Cap}_{2,1,q'}(M_k)}{r_k^N}. 
\end{align} 
Next, using \eqref{6h210520145} we have for any $(x,t)\in S_j$ 

\begin{align}\label{6h220520145}
\mathbb{I}_2^{2R_0}[\mu_k](x,t)=\int_{r_{j+1}}^{2R_0}\frac{\mu_k(\tilde{Q}_\rho(x,t))}{\rho^N}\frac{d\rho}{\rho}\lesssim\frac{\mu_k(\mathbb{R}^{N+1})}{r_j^N}
\end{align} 
if $k\geq j+1$, and 
\begin{align}\label{6h220520146}
\mathbb{I}_2^{2R_0}[\mu_k](x,t)=\int_{r_{k+1}}^{2R_0}\frac{\mu_k(\tilde{Q}_\rho(x,t))}{\rho^N}\frac{d\rho}{\rho}\lesssim\frac{\mu_k(\mathbb{R}^{N+1})}{r_k^N}
\end{align}
if $k\leq j-1$. Thus,
$$
A_2\lesssim\sum_{j=i_0}^{\infty}r_j^{2} \left(\sum_{k=i_0-1}^{j-1}\frac{\mu_k(\mathbb{R}^{N+1})}{r_k^N}\right)^q,$$
and
$$
A_3\lesssim\sum_{j=i_0}^{\infty}r_j^{2-Nq} \left(\sum_{k=j+1}^{\infty}\mu_k(\mathbb{R}^{N+1})\right)^q.
$$
Noticing that $(a+b)^q-a^q\leq q(a+b)^{q-1}b$ for any $a,b\geq 0$, we get
\begin{align*}
& (1-4^{-2})\sum_{j=i_0}^{\infty}r_j^{2} \left(\sum_{k=i_0-1}^{j-1}\frac{\mu_k(\mathbb{R}^{N+1})}{r_k^N}\right)^q
\\&\phantom{--------}
= \sum_{j=i_0}^{\infty}r_j^{2} \left(\sum_{k=i_0-1}^{j-1}\frac{\mu_k(\mathbb{R}^{N+1})}{r_k^N}\right)^q- \sum_{j=i_0+1}^{\infty}r_j^{2} \left(\sum_{k=i_0-1}^{j-2}\frac{\mu_k(\mathbb{R}^{N+1})}{r_k^N}\right)^q
\\&\phantom{--------}
\leq\sum_{j=i_0}^{\infty}qr_j^{2} \left(\sum_{k=i_0-1}^{j-1}\frac{\mu_k(\mathbb{R}^{N+1})}{r_k^N}\right)^{q-1}\frac{\mu_{j-1}(\mathbb{R}^{N+1})}{r_{j-1}^N}.
\end{align*} 
Similarly, we also have 
 \begin{align*}
 &(1-4^{2-Nq})\sum_{j=i_0}^{\infty}r_j^{2-Nq} \left(\sum_{k=j+1}^{\infty}\mu_k(\mathbb{R}^{N+1})\right)^q\\&~~~~~~~~~~~~~~~~\leq \sum_{j=i_0}^{\infty}qr_j^{2-Nq}\left(\sum_{k=j+1}^{\infty}\mu_k(\mathbb{R}^{N+1})\right)^{q-1}\mu_{j+1}(\mathbb{R}^{N+1}).
 \end{align*}
 Therefore,
 \begin{align*}
 A_2+A_3&\lesssim \sum_{j=i_0}^{\infty}r_j^{2} \left(\sum_{k=i_0-1}^{j-1}\frac{\mu_k(\mathbb{R}^{N+1})}{r_k^N}\right)^{q-1}\frac{\mu_{j-1}(\mathbb{R}^{N+1})}{r_{j-1}^N}\\&~~~+\sum_{j=i_0}^{\infty}r_j^{2-Nq}\left(\sum_{k=j+1}^{\infty}\mu_k(\mathbb{R}^{N+1})\right)^{q-1}\mu_{j+1}(\mathbb{R}^{N+1}).
 \end{align*}
Since $\mu_k(\mathbb{R}^{N+1})\lesssim r_k^{N+2-2q'}$ if $q>q_*$ and $\mu_k(\mathbb{R}^{N+1})\lesssim \min\{k^{-\frac{1}{q-1}},1\}$ if $q=q_*$  for any $k$, we infer that
$$
 r_j^{2} \left(\sum_{k=i_0-1}^{j-1}\frac{\mu_k(\mathbb{R}^{N+1})}{r_k^N}\right)^{q-1}\lesssim 1,
 $$
 and
 $$
 r_j^{2-Nq}\left(\sum_{k=j+1}^{\infty}\mu_k(\mathbb{R}^{N+1})\right)^{q-1}\lesssim r_{j+1}^{-N}~~\text{ for any }~j.
$$
In the  case $q=q_*$ we  assume $N\geq 3$  in order to ensure that  
$$\sum_{j=1}^{\infty}\mu_k(\mathbb{R}^{N+1})\lesssim\sum_{k=1}^{\infty}k^{-\frac{1}{q-1}}<\infty.$$
 This leads to 
 \begin{align*}
 A_2+A_3\lesssim \sum_{k=1}^{\infty}\frac{\mu_k(\mathbb{R}^{N+1})}{r_k^N}.
 \end{align*}
Combining this with  \eqref{6h220520141} and \eqref{6h220520142}, we deduce
\begin{align*}
A_0\lesssim \sum_{k=1}^{\infty}\frac{\text{Cap}_{2,1,q'}(M_k)}{r_k^N}.
\end{align*}
Consequently, we obtain \eqref{6h210520143} from \eqref{6h220520143}, for $\varepsilon$ small enough. \smallskip

\noindent\textbf{Proof of assertion \eqref{6h210520144}.} 
From \eqref{6h210520147} we get 
\begin{align*}
B\gtrsim \varepsilon\sum_{k=1}^{\infty}\frac{\mathcal{PH}_1^N(M_k)}{r_k^N}-B_0,
\end{align*}
where 
\begin{align*}
B_0=\mathbb{I}_2^{2R_0}\left[\exp\left(C_1\mathbb{I}_2^{2R_0}[\varepsilon
        \sum_{k=1}^{\infty}\nu_k]\right)-1\right](y_0,s_0).
\end{align*}
We show that 
\begin{align}\label{6h220520147}
B_0\leq c(N,q,R_0)~~\text{ for } \varepsilon ~\text{ small enough.}
\end{align}
In fact, as above we have 
\begin{align*}
B_0\lesssim\sum_{j=i_0}^{\infty}r_j^{-N}\int_{S_j}\exp\left(C_1\varepsilon\mathbb{I}_2^{2R_0}[
                     \sum_{k=1}^{\infty}\nu_k]\right)dxdt.
\end{align*} 
Consequently,
\begin{align}\nonumber
 B_0& \lesssim\sum_{j=i_0}^{\infty} r_j^{-N} \int_{S_j}\exp\left(3C_1\varepsilon\mathbb{I}_2^{2R_0}[
                     \nu_j]\right)dxdt\\&\nonumber +\sum_{j=i_0}^{\infty}r_j^{2} \exp\left(3C_1\varepsilon\sum_{k=i_0-1}^{j-1}||\mathbb{I}_2^{2R_0}[
                                          \nu_k]||_{L^\infty(S_j)}\right)
\\&\nonumber+\sum_{j=i_0}^{\infty}r_j^{2} \exp\left(3C_1\varepsilon\sum_{k=j+1}^{\infty}||\mathbb{I}_2^{2R_0}[\nu_k]||_{L^\infty(S_j)}\right) 
       \\&=B_1+B_2+B_3.\label{6h220520144}                              
\end{align}
Here we have used the convexity inequality $3\exp(a+b+c)\leq \exp(3a)+\exp(3b)+\exp(3c)$ for all real numbers $a,b,c$.\\
By Theorem \ref{6h2305201410}, we have
\begin{align*}
\int_{S_j}\exp\left(3C_1\varepsilon\mathbb{I}_2^{2R_0}[
                     \nu_j]\right)dxdt\lesssim r_j^{N+2}~~\text{for all} ~j,
\end{align*}
for $\varepsilon>0$ small enough. 
Hence, 
\begin{align}\label{6h220520148}
B_1\lesssim\sum_{j=i_0}^{\infty} r_j^{2}\lesssim(\max\{2R_0,1\})^2.
\end{align}
Note that estimates \eqref{6h220520145}  and \eqref{6h220520146} are also true with $\nu_k$; we deduce 
\begin{align*}
B_2+B_3&\lesssim \sum_{j=i_0}^{\infty}r_j^{2} \exp\left(c_2\varepsilon\sum_{k=i_0-1}^{j-1}\frac{\mu_k(\mathbb{R}^{N+1})}{r_k^N}\right)\\&~~
+\sum_{j=i_0}^{\infty}r_j^{2} \exp\left(c_2\varepsilon\sum_{k=j+1}^{\infty}\frac{\mu_k(\mathbb{R}^{N+1})}{r_j^N}\right).                                          
\end{align*}
From \eqref{6h210520147}  we have $\mu_k(\mathbb{R}^{N+1})\lesssim r_k^N$ for all $k$, therefore
\begin{align*}
B_2+B_3&\lesssim \sum_{j=i_0}^{\infty}r_j^{2} \exp\left(c_3\varepsilon (j-i_0)\right)
+\sum_{j=i_0}^{\infty}r_j^{2} \exp\left(c_{3}\varepsilon\right)
\\&\lesssim \sum_{j=i_0}^{\infty} \exp\left(c_3\varepsilon (j-i_0)-4\log(2)j\right)+r_{i_0}^2
\\&\leq c_{4}(N,q,R_0)~~\text{ for } \varepsilon~~ \text{small enough.}
\end{align*}
Combining this with \eqref{6h220520148} and \eqref{6h220520144} we obtain \eqref{6h220520147}.\\
This implies straightforwardly  $ \exp\left(C_1 \varepsilon\mathbb{I}_2^{2R_0}[\sum_{k=1}^{\infty}\nu_k]\right)\in L^1(\tilde{Q}_{R_0}(0,0))$. \\
We conclude that for any $(y_0,s_0)\in (B_\delta(0)\times(-\delta^2,\delta^2))\cap O$, 
$$
 u_1(y_0,s_0)\gtrsim \sum_{k=1}^{\infty}\frac{\text{Cap}_{2,1,q'}\left(M_k(y_0,s_0)\right)}{r_k^N},
   $$
   and
   $$
   u_2(y_0,s_0)\gtrsim -c_1(R_0)+\sum_{k=1}^{\infty}\frac{\mathcal{PH}_1^N\left(M_k(y_0,s_0)\right)}{r_k^N},
$$
 where $r_k=4^{-k}$ and
\begin{align*}
M_k(y_0,s_0)=O^c\cap \left(\overline{B_{r_{k+2}}(y_0)}\times [s_0-(73+\frac{1}{2})r_{k+2}^2,s_0-(70+\frac{1}{2})r_{k+2}^2]\right).
\end{align*}
If we take $r_{k_\delta+4}\leq \delta<r_{k_\delta+3}$, we have 
for $1\leq k\leq k_\delta$
\begin{align*}
M_k(y_0,s_0)&\supset O^c\cap \left(B_{r_{k+2}-\delta}(0)\times \left(\delta^2-(73+\frac{1}{2})r_{k+2}^2,-\delta^2-(70+\frac{1}{2})r_{k+2}^2\right)\right)\\&
\supset O^c\cap \left(B_{r_{k+3}}(0)\times \left(-73r_{k+2}^2,-71r_{k+2}^2\right)\right)
\\&
=O^c\cap \left(B_{r_{k+3}}(0)\times \left(-1168r_{k+3}^2,-1136r_{k+3}^2\right)\right).
\end{align*}
Finally,   
\begin{align*}
   & \inf_{(y_0,s_0)\in (B_\delta(0)\times(-\delta^2,\delta^2))\cap O}u_1(y_0,s_0)\\&~~~ \gtrsim -1+ \int_{r_{k_\delta+3}}^{1}\frac{\text{Cap}_{2,1,q'}(O^c\cap (B_{\rho}(0)\times (-17 b \rho^2,-b\rho^2)))}{\rho^N}\frac{d\rho}{\rho}~\text{with}~b=1136
   \\&~~~ \gtrsim -1+ \int_{30r_{k_\delta+3}}^{1}\frac{\text{Cap}_{2,1,q'}(O^c\cap (B_{\frac{\rho}{30}}(0)\times (-30 \rho^2,-\rho^2)))}{\rho^N}\frac{d\rho}{\rho}\to \infty ~\text{ as }~ \delta\to 0,
    \end{align*} 
 and
  \begin{align*}
    & \inf_{(y_0,s_0)\in (B_\delta(0)\times(-\delta^2,\delta^2))\cap O}u_2(y_0,s_0) \\&~~~ \gtrsim -1+ \int_{30r_{k_\delta+3}}^{1}\frac{\mathcal{PH}_1^N(O^c\cap (B_{\frac{\rho}{30}}(0)\times (-30 \rho^2,-\rho^2)))}{\rho^N}\frac{d\rho}{\rho}\to \infty ~\text{ as }~ \delta\to 0.
  \end{align*}
  This completes the proof of Theorem \ref{6h230520143}-(i) and Theorem \ref{6h230520144}.\\
  
  \begin{remark}[Uniqueness] In \cite{66MV0}, Marcus and V\'eron prove that condition (\ref{labut}) is not only a necessary and sufficient condition for the existence of a large solution to (\ref{u^q}), but it implies the uniqueness of a such a large solution when it is fulfilled. 
  The main step for this proof is to show that there exists a constant $c=c(\Omega,q>0)$ such that any couple of large solutions $(u,\hat u)$ satisfies
  \begin{equation}\label{equiv}
  u(x)\leq c\hat u(x)\qquad\forall x\in\Omega.
\end{equation}
The above estimate which is the key stone for proving uniqueness cannot be obtained in the case of the parabolic equation (\ref{6h220520149}) since the necessary condition and the sufficient condition in Theorem \ref{6h230520143} do not complement completely. 
  \end{remark}
 \subsection{The viscous Hamilton-Jacobi parabolic equations}
In this section we apply our previous result to the question of existence of a large solution of the following type of parabolic viscous Hamilton-Jacobi equation
\begin{equation}\label{6h260520144}
\begin{array}{lll}
     \partial_tu-\Delta u +a|\nabla u|^p+bu^{q}=0 \qquad&\text{ in } O,\\ 
     \phantom{ \partial_t-\Delta u +a|\nabla u|^p+bu^{q}}
      u=\infty~&\text{ on}~\partial_p O,\\      
      \end{array} 
\end{equation}
  where $a>0,b> 0$ and $1<p\leq 2$, $q\geq 1$. 
First, we show that such a large solution to \eqref{6h260520144} does not exist when $q=1$. Equivalently, there is no function $u\in C^{2,1}(O)$ satisfying 
\begin{equation}\label{6h260520145}
\begin{array}{lll}
     \partial_tu-\Delta u +a|\nabla u|^p\geq -bu \qquad&\text{ in } O,\\ 
     \phantom{  \partial_t-\Delta u +a|\nabla u|^p}
      u=\infty~&\text{ on}~\partial_p O.\\      
      \end{array} 
\end{equation}
for $a>0$, $b> 0$ and $p>1$. Indeed, assuming that such a function $u\in C^{2,1}(O)$ exists,  we define 
$$U(x,t)=u(x,t)e^{bt}-\frac{\varepsilon}{2}|x|^2,$$ 
for $\varepsilon>0$ and 
denote by $(x_0,t_0)\in O\backslash \partial_pO$ the point where $U$ achieves it minimum in $O$, i.e.  $U(x_0,t_0)=\inf\{U(x,t):(x,t)\in O\}$.
Clearly, we have 
\begin{align*}
\partial_tU(x_0,t_0)\leq 0,~~ \Delta U(x_0,t_0)\geq 0~~\text{ and }~\nabla U(x_0,t_0)=0.
\end{align*}
Thus, 
\begin{align*}
\partial_tu(x_0,t_0)\leq -bu(x_0,t_0),~~-\Delta u(x_0,t_0)\leq -\varepsilon N e^{-bt_0} ~\text{ and }~ a|\nabla u(x_0,t_0)|^p=a\varepsilon^p|x_0|^pe^{-pbt_0},
\end{align*}
from which follows
\begin{align*}
\partial_tu(x_0,t_0)-\Delta u(x_0,t_0)+a|\nabla u(x_0,t_0)|^p&\leq -bu(x_0,t_0)+\varepsilon e^{-bt_0}\left(-N+a\varepsilon^{p-1}|x_0|^pe^{-(p-1)bt_0}\right)\\& <-bu(x_0,t_0)
\end{align*}
 for $\varepsilon$ small enough, which is a contradiction.\smallskip
 
\noindent \begin{proof}[Proof of Theorem \ref{6h260520148}] By Remark \ref{6h260520147}, we have 
\begin{align*}
\inf\{v(x,t); (x,t)\in O\}\geq (q_1-1)^{-\frac{1}{q_1-1}}R^{-\frac{2}{q_1-1}}.
\end{align*}
  Take $V=\lambda v^{\frac{1}{\alpha}}\in C^{2,1}(O)$ for $\lambda>0$. Thus $v=\lambda^{-\alpha}V^\alpha$,\begin{align*}
  \inf\{V(x,t); (x,t)\in O\}>0\}\geq\lambda (q_1-1)^{-\frac{1}{\alpha(q_1-1)}} R^{-\frac{2}{\alpha(q_1-1)}},
  \end{align*} and 
\begin{align*}
\partial_tv-\Delta v +v^{q_1}=\alpha\lambda^{-\alpha}V^{\alpha-1}\partial_tV-\alpha\lambda^{-\alpha}V^{\alpha-1}\Delta V+\alpha(1-\alpha)\lambda^{-\alpha}V^{\alpha-1}\frac{|\nabla V|^2}{V}+\lambda^{-\alpha q_1}V^{\alpha q_1}.
\end{align*}
This leads to 
\begin{align*}
\partial_tV-\Delta V+(1-\alpha)\frac{|\nabla V|^2}{V}+\alpha^{-1}\lambda^{-\alpha (q_1-1)}V^{\alpha q_1-\alpha+1}=0~~\text{ in }~O.
\end{align*}
Using H\"older's inequality we obtain
\begin{align*}
(1-\alpha)\frac{|\nabla V|^2}{V}+(2\alpha)^{-1}\lambda^{-\alpha (q_1-1)}V^{\alpha q_1-\alpha+1}&\geq c_1|\nabla V|^p\lambda^{-\frac{\alpha(q_1-1)(2-p)}{2}}V^{\frac{\alpha(q_1-1)(2-p)}{2}-(p-1)}
\\& \geq c_2|\nabla V|^p\lambda^{-(p-1)}R^{-2+p+\frac{2(p-1)}{\alpha(q_1-1)}},
\end{align*}
 and 
 \begin{align*}
 (2\alpha)^{-1}\lambda^{-\alpha (q_1-1)}V^{\alpha q_1-\alpha+1}\geq c_3\lambda^{-(q-1)}R^{-2+\frac{2(q-1)}{\alpha(q_1-1)}}V^{q}.
 \end{align*}
If we  choose $$\lambda=\min\{c_2^{\frac{1}{p-1}},c_3^{\frac{1}{q-1}}\}\min\left\{a^{-\frac{1}{p-1}}R^{-\frac{2-p}{p-1}+\frac{2}{\alpha(q_1-1)}}, b^{-\frac{1}{q-1}}R^{-\frac{2}{q-1}+\frac{2}{\alpha(q_1-1)}}\right\},$$ then 
\begin{align*}
& c_2\lambda^{-(p-1)}R^{-2+p+\frac{2(p-1)}{\alpha(q_1-1)}}\geq a,\\& 
c_3\lambda^{-(q-1)}R^{-2+\frac{2(q-1)}{\alpha(q_1-1)}}\geq b,
\end{align*}
from what follows 
\begin{align*}
\partial_tV-\Delta V+a|\nabla V|^p+bV^{q}\leq 0~~\text{ in }~O.
\end{align*}
By Remark \ref{6h260520146}, there exists a maximal solution $u\in C^{2,1}(O)$ of 
\begin{align*}
\partial_tu-\Delta u+a|\nabla u|^p+bu^{q}= 0~~\text{ in }~O.
\end{align*}
Therefore, $u\geq V=\lambda v^{\frac{1}{\alpha}}$ and $u$ is a large solution of \eqref{6h260520144}. This completes the proof of Theorem \ref{6h260520148}. 
\end{proof}\\
\section{Appendix}
\begin{proof}[Proof of Proposition \ref{6h280520141}] \smallskip

\noindent{\it Step 1}. We claim that the following relation holds:
\begin{align}\label{6h0206201411}
\int_{\mathbb{R}^{N+1}}\left(\mathbb{I}_2^1[\mu](x,t)\right)^{(N+2)/N}dxdt\asymp \int_{\mathbb{R}^{N+1}}\int_{0}^{1}(\mu(\tilde{Q}_r(x,t)))^{2/N}\frac{dr}{r}d\mu(x,t).
\end{align}
In fact, we have  for $\rho_j=2^{-j}$, $j\in\mathbb{Z}$,
\begin{align*}
\sum_{j=1}^{\infty}\int_{\mathbb{R}^{N+1}}(\mu(\tilde{Q}_{\rho_j}(x,t)))^{2/N}d\mu(x,t)&\lesssim\int_{\mathbb{R}^{N+1}}\int_{0}^{1}(\mu(\tilde{Q}_r(x,t)))^{2/N}\frac{dr}{r}d\mu(x,t)\\& \lesssim \sum_{j=0}^{\infty}\int_{\mathbb{R}^{N+1}}(\mu(\tilde{Q}_{\rho_j}(x,t)))^{2/N}d\mu(x,t).
\end{align*}
Note that for any $j\in\mathbb{Z}$
\begin{align*}
\rho_j^{-N-2}\int_{\mathbb{R}^{N+1}} (\mu(\tilde{Q}_{\rho_{j+1}}(x,t)))^{(N+2)/N}dxdt&\lesssim\int_{\mathbb{R}^{N+1}}  (\mu(\tilde{Q}_{\rho_{j}}(x,t)))^{2/N}d\mu(x,t)\\&\lesssim\rho_j^{-N-2}\int_{\mathbb{R}^{N+1}} (\mu(\tilde{Q}_{\rho_{j-1}}(x,t)))^{(N+2)/N}dxdt.
\end{align*}
Thus, 
\begin{align*}
\sum_{j=2}^{\infty}\rho_j^{-N}\int_{\mathbb{R}^{N+1}}(\mu(\tilde{Q}_{\rho_j}(x,t)))^{(N+2)/N}dxdt&\lesssim\int_{\mathbb{R}^{N+1}}\int_{0}^{1}(\mu(\tilde{Q}_r(x,t)))^{2/N}\frac{dr}{r}d\mu(x,t)\\& \lesssim \sum_{j=-1}^{\infty}\rho_j^{-N}\int_{\mathbb{R}^{N+1}}(\mu(\tilde{Q}_{\rho_j}(x,t)))^{(N+2)/N}dxdt.
\end{align*} 
This yields 
\begin{align*}
\int_{\mathbb{R}^{N+1}}\left(\mathbb{M}_2^{1/4}[\mu](x,t)\right)^{(N+2)/N}dxdt &\lesssim\int_{\mathbb{R}^{N+1}}\int_{0}^{1}(\mu(\tilde{Q}_r(x,t)))^{2/N}\frac{dr}{r}d\mu(x,t)\\& \lesssim \int_{\mathbb{R}^{N+1}}\left(\mathbb{I}_2^{4}[\mu](x,t)\right)^{(N+2)/N}dxdt.
\end{align*}
By \cite[Theorem 4.2]{66H1},  \begin{align*}
\int_{\mathbb{R}^{N+1}}\left(\mathbb{M}_2^{1/4}[\mu](x,t)\right)^{(N+2)/N}dxdt\asymp \int_{\mathbb{R}^{N+1}}\left(\mathbb{I}_2^{4}[\mu](x,t)\right)^{(N+2)/N}dxdt,
\end{align*} thus we obtain \eqref{6h0206201411}.
\smallskip

\noindent{\it Step 2}. End of the proof.  
The first inequality in \eqref{6h010620141} is proved in \cite{66H1}. We now prove the second inequality.   By Theorem \ref{6h2305201412} there is $\mu\in\mathfrak{M}^+(\mathbb{R}^{N+1}), \text{supp}(\mu)\subset K$ such that 
\begin{align}
||\mathbb{M}_2^2[\mu]||_{L^\infty(\mathbb{R}^{N+1})}\leq 1~\text{ and }~\mu(K)\asymp \mathcal{PH}_2^N(K)\gtrsim |K|^{N/(N+2)}.
\end{align}
Thanks to  \eqref{6h0206201411}, we have for $\delta=\min\{1,(\mu(K))^{1/N}\}$
\begin{align*}
||\mathbb{I}_2^1[\mu]||_{L^{(N+2)/N}(\mathbb{R}^{N+1})}^{(N+2)/N}&\asymp\int_{\mathbb{R}^{N+1}}\int_{0}^{1}(\mu(\tilde{Q}_r(x,t)))^{2/N}\frac{dr}{r}d\mu(x,t)
\\& \asymp \int_{\mathbb{R}^{N+1}}\left(\int_{0}^{\delta}+\int_{\delta}^{1}\right)(\mu(\tilde{Q}_r(x,t)))^{2/N}\frac{dr}{r}d\mu(x,t)\\&\lesssim
\int_{0}^{\delta}r^2\frac{dr}{r}\int_{\mathbb{R}^{N+1}}d\mu(x,t)+\int_{\delta}^{1}\frac{dr}{r} \left(\int_{\mathbb{R}^{N+1}}d\mu(x,t)\right)^{(N+2)/N}
\\&\lesssim (\mu(K))^{(N+2)/N}\left(1+\log_+\left((\mu(K))^{-1}\right)\right)
\\& \lesssim (\mu(K))^{(N+2)/N} \log\left(\frac{|\tilde{Q}_{200}(0,0)|}{|K|}\right).
\end{align*}
Set $\tilde{\mu}=\left(\log\left(\frac{|\tilde{Q}_{200}(0,0)|}{|K|}\right)\right)^{-N/(N+2)}\mu/\mu(K)$, then
$
||\mathbb{I}_2^1[\tilde{\mu}]||_{L^{(N+2)/N}(\mathbb{R}^{N+1})}\lesssim 1.$\\
It is well known that 
\begin{align}\label{6h0206201410}
\text{Cap}_{2,1,\frac{N+2}{2}}(K)\asymp \sup\{(\omega(K))^{(N+2)/2}:\omega\in\mathfrak{M}^+(K),||\mathbb{I}_2^1[\omega]||_{L^{(N+2)/N}(\mathbb{R}^{N+1})}\lesssim 1\}
\end{align}
see \cite[Section 4]{66H1}.
This gives the second inequality in \eqref{6h010620141}. \\
It is easy to prove \eqref{6h010620149} from its definition.  Moreover,  \eqref{6h0206201410} implies that 
\begin{align*}
\frac{1}{\text{Cap}_{2,1,\frac{N+2}{2}}(K)^{2/N}}\asymp \inf\{||\mathbb{I}_2^1[\omega]||_{L^{(N+2)/N}(\mathbb{R}^{N+1})}^{(N+2)/N}:\omega\in\mathfrak{M}^+(K), \omega(K)=1\}.
\end{align*} 
We deduce from \eqref{6h0206201411} that 
\begin{align}\label{6h0206201412}
\frac{1}{\text{Cap}_{2,1,\frac{N+2}{2}}(K)^{2/N}}\asymp \inf\left\{\int_{\mathbb{R}^{N+1}}\int_{0}^{1}(\omega(\tilde{Q}_r(x,t)))^{2/N}\frac{dr}{r}d\mu(x,t):\omega\in\mathfrak{M}^+(K), \omega(K)=1\right\}.
\end{align}
As in \cite[proof of Lemma 2.2]{66Lab}, it is easy to derive \eqref{6h0106201410} from \eqref{6h0206201412}. 
\end{proof} \medskip

\noindent\begin{proof}[Proof of Proposition \ref{6h300520142}] Thanks to the Poincar\'e inequality,
it is enough to show that there exists $\varphi\in C_c^\infty(\tilde{Q}_{3/2}(0,0))$ such that $0\leq \varphi \leq 1$, with $\varphi=1$ in an open neighborhood of  $K$ and 
\begin{align}
\int_{\mathbb{R}^{N+1}}(|D^2 \varphi|^p+|\partial_t\varphi|^p)dxdt\lesssim \text{Cap}_{2,1,p}(K). 
\end{align}
By definition, one can find $0\leq \phi \in S(\mathbb{R}^{N+1})$,  $\phi\geq 1$ in a neighborhood of $K$ such that 
\begin{align*}
\int_{\mathbb{R}^{N+1}}(|D^2 \phi|^p+|\nabla \phi|^p+|\phi|^p+|\partial_t\phi|^p)dxdt\leq 2\text{Cap}_{2,1,p}(K).
\end{align*}
Let $\eta$ be a cut off function on $\tilde{Q}_{1}(0,0)$ with respect to $\tilde{Q}_{3/2}(0,0)$ and $H\in C^\infty (\mathbb{R})$ such that 
\begin{align*}
0\leq H(t)\leq t^+,~ |t||H^{\prime\prime}(t)|\lesssim 1 ~\text{ for all } t\in \mathbb{R}, ~H(t)=0~\text{ for  } t\leq 1/4~~\text{ and } ~H(t)=1 ~\text{ for  } t\geq 3/4.
\end{align*}
We claim that 
\begin{align}\label{6h300520143}
\int_{\mathbb{R}^{N+1}}(|D^2 \varphi|^p+|\partial_t\varphi|^p)dxdt\lesssim \int_{\mathbb{R}^{N+1}}(|D^2 \phi|^p+|\nabla \phi|^p+|\phi|^p+|\partial_t\phi|^p)dxdt,
\end{align}
where $\varphi =\eta H(\phi)$. Indeed, we have 
$$
|D^2 \varphi|\lesssim |D^2\eta|H(\phi)+|\nabla\eta| |H^{\prime}(\phi)||\nabla \phi|+\eta |H^{\prime\prime}(\phi)||\nabla \phi|^2+\eta|H^{\prime}(\phi)||D^2\phi|,$$
and
$$
|\partial_t\varphi|\lesssim |\partial_t\eta| H(\phi)+\eta |H^{\prime}(\phi)||\phi_t|, ~~H(\phi)\leq \phi, ~~\phi|H^{\prime\prime}(\phi)|\lesssim 1.
$$
Thus, 
\begin{align*}
\int_{\mathbb{R}^{N+1}}(|D^2 \varphi|^p+|\partial_t\varphi|^p)dxdt&\lesssim \int_{\mathbb{R}^{N+1}}(|D^2 \phi|^p+|\nabla \phi|^p+|\phi|^p+|\partial_t\phi|^p)dxdt\\& 
~~~+ \int_{\mathbb{R}^{N+1}}\frac{|\nabla \phi|^{2p}}{\phi^p}dxdt.
\end{align*}
This implies \eqref{6h300520143} since, according to \cite{66Ad1}, one has
 \begin{align*}
 \int_{\mathbb{R}^{N}}\frac{|\nabla \phi(t)|^{2p}}{(\phi(t))^p}dx\lesssim \int_{\mathbb{R}^N}|D^2 \phi(t)|^pdx~~\forall t\in\mathbb{R}.
 \end{align*}
 
\end{proof}
 
\end{document}